\definecolor{dullmagenta}{rgb}{0.4,0,0.4}   
\definecolor{darkblue}{rgb}{0,0,0.4}
\newcommand{\x}{{\mathbf x}}
\newcommand{\y}{{\mathbf y}}
\newcommand{\vel}{{\mathbf v}}
\newtheorem{thm}{Theorem}[section]
\newtheorem{cor}[thm]{Corollary}
\newtheorem{prop}[thm]{Proposition}
\theoremstyle{remark}
\newtheorem{rem}[thm]{Remark}
\begin{document}
\title{Spectrally Optimized \\ Pointset Configurations }

\author{Braxton Osting}
\address{Department of Mathematics, 
University of Utah, 
Salt Lake City, UT 84112, USA}
\email{osting@math.utah.edu }
\thanks{}

\author{Jeremy Marzuola}
\address{Department of Mathematics, University of North Carolina, Chapel Hill, NC  27599, USA}
\email{marzuola@math.unc.edu}
\thanks{}

\subjclass[2010]{52C35,  52A40, and 35P05}

\keywords{eigenvalue optimization, density of states, potential energy minimization, universal optimality, regular polytopes, and triangular lattice}

\date{\today}

\begin{abstract}
The search for optimal configurations of pointsets, the most notable examples being the problems of Kepler and Thompson, have an extremely rich history with diverse applications in physics, chemistry, communication theory, and scientific computing. In this paper, we introduce and study a new optimality criteria for pointset configurations. Namely, we consider a certain weighted graph associated with a pointset configuration and seek configurations which minimize certain spectral properties of the adjacency matrix or graph Laplacian defined on this graph, subject to geometric constraints on the pointset configuration. This problem can be motivated by solar cell design and swarming models, and we consider several spectral functions with interesting interpretations such as spectral radius, algebraic connectivity,  effective resistance, and condition number. We prove that the regular simplex extremizes several spectral invariants on the sphere. We also consider pointset configurations on flat tori via (i) the analogous problem on lattices and (ii) through a variety of computational experiments.  For many of the objectives considered (but not all), the triangular lattice is extremal. 
\end{abstract}

\maketitle


\section{Introduction}
We consider \emph{optimal} configurations of pointsets in various spaces. Depending on the notion of optimality and the space considered, these types of problems have broad applicability in, for example, physics and  chemistry \cite{Torquato2010},  information theory and communication \cite{shannon2001,cohn2010}, and scientific computing \cite{matouvsek1999,damelin2003,borodachov2012}. Interesting and intriguing questions regarding the relationship between different notions of optimality criteria, equidistribution, and optimal geometric configurations abound; an overview of this subject can be found in the surveys of \cite{fejes1964,saff1997,ConwaySloane,cohn2010,Torquato2010}. 
In this paper, we define a new optimality criteria which is a variation of the typical problems considered. We are motivated by applications in solar cell design and models for swarming and agent interactions, although we expect that the criteria which we consider could have  other interesting  interpretations. In particular, we  consider a certain weighted graph associated with a pointset configuration and seek configurations which minimize certain spectral invariants  of the adjacency operator or graph Laplacian defined on this graph.  Before we describe the construction of the graph and our objective criteria, we review some related problems and their applications. 

\subsection*{Previous results on optimal pointset configurations} 
J. J. Thomson posed the problem of finding the steady-state distribution of $n$ identical charges constrained to the surface of a sphere \cite{thomson1904}. This is formulated as  finding configurations of points $\{ \x_i \}_{i\in[n]} \subset \mathbb S^2$ that minimize the Coulomb energy, 
$\displaystyle \sum_{\substack{i,j\in[n] \\ i\neq j}} \ | \x_i - \x_j |^{-1}. $
This problem has been generalized to include other types of interaction energies,
\begin{equation}
\label{eq:genPairEnergy}
V_f(\{ \x\}_{i=1}^n) = \sum_{\substack{i,j\in[n] \\ i\neq j}} \ f(| \x_i - \x_j |^2 ), 
\end{equation}
where $f$ is taken to be a suitable decreasing function. For example, the Riesz s-energy or Epstein zeta energy is given by  $f_s(r) = r^{-s/2}$,  the theta energy is given by  $f_\alpha(r) = e^{-2 \pi \alpha r}$ for any $\alpha>0$, and the logarithmic energy is given by $f(r) = - \log(r)$.  Thompson's problem can be recovered by taking $f(r) = r^{-\frac 1 2}$, which is of the form of the Riesz s-energy for $s=1$. This problem has also been generalized to finding an optimal pointset configuration on a variety of other manifolds, such as the torus. This family of problems and their applications was recognized by L. L. Whyte \cite{whyte1952unique} and  further gained popularity when Steve Smale listed the problem of finding the configuration to minimize the logarithmic energy as a mathematical problem for the next century  \cite{Smale1998}.  Recently, H. Cohn and A. Kumar \cite{cohn2007} considered optimal distributions of points on spheres for a broad class of energies. Using linear programming bounds, they showed that minimizers tend to  minimize a broad class of energies, a phenomena which they refer to as ``universal optimality''. In particular, the $E_8$ root system and the Leech lattice are universally optimal in 8 and 24 dimensions respectively and lower dimensional sections of these configurations can be shown to be universally optimal in dimensions 2--7 and 21--23.  In  \cite{ballinger2009}, a variety of computer experiments were conducted to find minimal energy spherical point configurations. In the dimensions where universal optimizers are known, these experiments can reproduce the optimal configurations. In the other dimensions, the structure of the optimal configurations and even the dimension of the optimizing space are still not well-understood.  

\subsection*{Dimension two and the triangular lattice} In dimension two, the triangular lattice configuration is a ubiquitous optimizer for pointset configurations and many of these properties are also realized in the closely related equilateral torus and honeycomb tilings. As we will show the triangular lattice is also optimal for several of the spectral invariants considered here, we briefly comment on several problems for which it is already known to be optimal. 

Optimal arrangements of spheres for a variety of  packing, kissing, and covering problems have  centers which are arranged in a triangular lattice  \cite{ConwaySloane}. For example, the maximum number of pennies that can be arranged to simultaneously touch a central one is six, obtained by the triangular configuration. Also, as shown by A. Thue, the triangular packing has the largest density of all packings in two dimensions.

Among lattices, the triangular lattice is the minimizer of  $V_f$ in \eqref{eq:genPairEnergy} (normalized) where $f$ is  (i) the Riesz s-energy or Epstein zeta energy,  $f(r) = r^{-s/2}$ for  any $s>0$ \cite{rankin1953,cassels1959,diananda1964,ennola1964} or (ii) the theta energy, $f_\alpha(r) = e^{-2 \pi \alpha r}$ for any $\alpha>0$ \cite{Montgomery1988}. This result has been extended in various ways for other potential energies. For completely monotonic functions $f$, the triangular lattice is the minimizer among equal volume Bravais lattices \cite{Betermin2015} and conjectured to be the universal minimizer among all configurations \cite{cohn2007}. 
We note however that the triangular lattice is not optimal for all potential energies; for example, the Lennard-Jones  energy can be tuned so that the triangular lattice is not optimal  \cite{Marcotte2011,betermin2014,Betermin2015}. 

The fact that the triangular lattice is optimal in so many contexts is not surprising when you consider its relatively large number of symmetries. These symmetries also manifest themselves into operators acting on functions defined on the lattice. For example, it is not difficult to show that the triangular lattice is the unique two-dimensional lattice for which the  nearest-neighbor finite-difference approximation of the Laplacian is isotropic at fourth order. In this paper, we will consider operators with matrix entries that depend smoothly on the pointset configuration. 

The equilateral torus also optimizes a variety of spectral quantities. M. Berger showed that the maximum first eigenvalue of the Laplace-Beltrami operator  over all flat tori of fixed volume  is attained only by the equilateral torus \cite{Berger1973,KLO2015}. For a broad class of Hilbert-Schmidt integral operators with an isotropic, stationary kernel, the equilateral torus maximizes the operator norm and the Hilbert-Schmidt norm  among all unit-volume flat tori \cite{OM1}.  Again however, the equilateral torus is not extremal for all spectral quantities; counterexamples exist involving heat kernels on flat tori \cite{baernstein1997}. 
 
\subsection*{Spectral invariants associated with a pointset}
We consider a finite or countably infinite pointset, $\{ \x_j \}_{j\in [n]}$, where each point $\x_j$ is distinct, {\it i.e.}, $d(\x_i, \x_j) > 0, \ \forall \ i\neq j$, and constrained to a compact  Riemannian manifold, $M$, {\it i.e.}, $\x_j \in M,  \ \forall \  j \in [n]$. Here the distance $d(\cdot, \cdot)$ is taken to be either the Euclidean distance if $M$ is embedded or the geodesic distance on $M$. In later sections, we will take M to be either a sphere $\mathbb S^{d-1}\subset \mathbb R^d$ or a $d$-dimensional torus, $\mathbb T^d$, but for now we only assume that $M$ is compact. 

We have in mind a process where the points in the set interact through a process which is dependent only on their pairwise distances. Thus, we define the following \emph{weighted adjacency matrix}, 
\begin{equation} \label{eq:W}
W^\x_{ij} = \begin{cases}  f \big( d^2( \x_i, \x_j) \big)  & i\neq j \\
0 &i = j 
\end{cases}
\end{equation}
where $f\colon (0,\infty) \to [0,\infty)$ is a smooth and nonnegative function. Note that, following the convention in \cite{cohn2007,cohn2010}, the function $f$ acts on the squared pairwise distances. 

For a given pointset, $\{ \x_j \}_{j\in [n]}$, we associate an undirected geometric graph, $G^\x =(V,E^\x)$.  We will abuse notation by using $V=[n]$ to denote both the indexing of the points and the vertex set for the graph. An edge is present between vertices $i, j \in [n]$ if $W^\x_{ij} > 0$.  The weight matrix $W^\x$ can be viewed as a collection of weights on $E^\x$ and we might also call $W^\x$ the \emph{weighted adjacency matrix} associated to the graph $G^\x$. 
The \emph{weighted degree} of vertex $i\in [n]$ is  defined
$d^\x_i = \sum_{j\in [n]} W^\x_{i,j}$. 
Let $D^\x$ denote the diagonal matrix defined by $(D^\x )_{ii} = d^\x_i$. The \emph{graph Laplacian}\footnote{In some contexts, $L^\x$ is referred to as the weighted unnormalized graph Laplacian \cite{von2007tutorial}, but we do not consider any of the other graph Laplacians here.} is then defined as
\begin{equation}
\label{eq:L}
L^\x = D^\x- W^\x. 
\end{equation} 

Generally speaking, a pointset, $\{ \x_j \}_{j\in [n]} \subset M$, interacting via the weighted adjacency matrix \eqref{eq:W} or graph Laplacian  \eqref{eq:L} will be limited by the spectrum of these operators. Thus,  we are motivated to study various spectral functions of these operators, which we will describe in detail in Section~\ref{sec:bg}. If we let  $\{\lambda_i^\x \}_{i=1}^n$ denote the eigenvalues of the graph Laplacian, examples of spectral quantities we will study are 
the trace $ \sum_{j = 1}^n \lambda_j^\x$, 
the effective resistance $\sum_{j \neq 1} (\lambda_j^\x )^{-1}$, 
the algebraic connectivity $\lambda_2^\x$, 
the spectral radius $\lambda_n^\x$, 
 the condition number $\lambda_n^\x/\lambda_2^\x$, 
 and distance between the spectrum and a given interval.

\subsection*{Results.} In this paper, we consider the above spectral invariants  for pointset configurations on spheres and tori. In Section \ref{sec:sphere}, we consider optimal configurations of $n\geq 3$ points on the sphere,  $\mathbb S^{n-2} \subset \mathbb R^{n-1}$. We prove that under certain assumptions on $f$, the regular simplex extremizes the trace,  spectral radius, total effective resistance, and the algebraic connectivity of the associated graph. The result for the trace of the graph Laplacian reduces to a well-known result for Thompson's problem \eqref{eq:genPairEnergy} and the result for the total effective resistance  follows {\it mutatis mutandis}. The proofs for the spectral norm and algebraic connectivity are generalized  to account for the fact that these functions are non-differentiable. 

We approach the problem of finding optimal pointset configurations on tori  by first considering  analogous problems for lattices in Section~\ref{sec:lat}. 
We prove a convergence result that makes precise the relationship between these problems. In Theorems~\ref{thm:LatMatNorm} and \ref{thm:LapMoments}, we prove that for certain $f$, the triangular lattice minimizes certain moments of the density of states for an operator defined on the lattice. 
 Finally, in Section~\ref{sec:tori}, we present some numerical results for optimal configurations on  flat tori. For a variety of spectral invariants and judicious choice of the function $f$, computational results provide evidence that the triangular lattice is extremal  among general (non-lattice) configurations.  However, in Section \ref{sec:interval}, for a specific choice of objective function, we find pointset configurations that are triangular lattices locally, but which have defects, and have smaller objective function values than the truncated triangular lattice.

\subsection{Motivations.} We are motivated by several applications which we describe below. The problems we consider here are abstract versions with varying fidelity to the original motivating problems. 

\subsection*{Motivation: Fermi's golden rule,  spectral band gaps, and solar cell design}
Our primary  motivation for studying optimal configurations of pointsets stem from a variety of problems in physics and engineering where it is of interest to either increase or decrease the density of states associated with a particular  frequency (or band of frequencies) by modifying an environmental variable. For example, in the quantum setting, it is sometimes desirable to enhance or inhibit spontaneous emission, as described by Fermi's Golden Rule, by modifying the environment of an atom \cite{CohenTannoudji,Kirr-Weinstein:03,Kirr-Weinstein:01,potDesign}. The idea of controlling the lifetime of states by varying the characteristics of a background potential goes back to the work of E. Purcell \cite{Purcell1952,Purcell:1946sf}, who reasoned that the lifetime of a state can be influenced by manipulating the set of states to which it can couple, and through which it can radiate. One approach to prevent propagation of waves at particular frequencies is through the use of periodicity to introduce spectral band gaps \cite{yablonovitch1987}. 
Another example of this type of problem arises in solar cell design where it is desirable to engineer a device to optimally harness solar energy, in the sense that there is maximal absorption of energy for a band of frequencies for Maxwell's equation related to the solar spectrum of light  \cite{yu2010fundamental,owen-survey,owen1,owen2}.  Such problems can be formulated as finding spatially-varying coefficients in Maxwell's equations so that there are scattering resonances near the band of the solar spectrum. 

Since scattering poles for Maxwell's equations are difficult to compute, one approach to simplifying this problem would be to consider the tight-binding approximation in which one obtains discrete operators similar to the weighted adjacency matrix \eqref{eq:W}. Since the spectrum of these graph operators are better understood and relatively inexpensive to compute, the spectral optimization problems are more accessible. However, as the computational methods we employ depend only on the computation of eigenvalues and their derivatives with respect to the locations of the points, with a fast numerical solver for Maxwell (or Helmholtz) scattering resonances a similar approach could be taken for the solar cell problem, which will be a topic for future work. 

\subsection*{Motivation: Swarming, flocking, and agent interaction models}
There are a variety of agent-based models which describe the time evolution of a system of ``agents'' that interact according to certain deterministic rules. When the agents align or self-organize in interesting ways, the behavior is termed \emph{swarming} or \emph{flocking}, see, {\it e.g.}, \cite{cucker2007,cucker2007b,motsch2011}. 
Let the particles have positions $\{\x_i \}_{i\in[n]}$ and velocities $\{\vel_i \}_{i\in[n]}$, and consider the time evolution equations 
\begin{align*}
\dot \x_i &= \vel_i \\ 
\dot \vel_i &= \sum_{j \neq i} a(\x_i,\x_j)   (\vel_j - \vel_i). 
\end{align*}
In  the Cucker-Smale model, the interaction kernel, $a(\cdot,\cdot)$, is symmetric and depends only on pairwise distances between particles, {\it i.e.}, 
$a(\x_i,\x_j) = f\left( d^2( \x_i, \x_j) \right)$. 
It follows that some long-time dynamics of the Cucker-Smale model can be inferred from the spectral properties of the adjacency matrix,  $(W^\x)_{ij} = f\left( d^2( \x_i, \x_j)\right)$ for arbitrary pointset configurations. Extremal pointset configurations can provide  bounds on such quantities.

\subsection*{Motivation: Low-dimensional spectral embedding of data}
In a variety of data analysis problems, it is of interest to reduce the dimension of a dataset by embedding into a relatively low dimensional space. This work can be interpreted as finding datasets that have extremal spectral embedding properties \cite{belkin2003laplacian,belkin2006manifold,singer2006graph}. 

\bigskip

\subsection*{Outline} In Section \ref{sec:bg}, we give some mathematical preliminaries. 
In Section \ref{sec:sphere}, we consider optimal configurations on spheres. 
In Section \ref{sec:lat}, we discuss optimal lattice configurations. 
In Section \ref{sec:tori}, we consider optimal pointset configurations on tori.  
We conclude in Section \ref{sec:disc} with a brief discussion.  
Sensitivity analysis of spectral quantities is discussed in Appendix~\ref{sec:sensitivity analysis} and parameterization of lattices in Appendix~\ref{sec:LattParam}.

 \subsection*{Acknowledgements} The authors would like to thank Mikael Rechtsman for starting them down the path of this work, as well as Laurent Betermin, Elena Cherkaev, Owen Miller, and Peter Mucha for valuable discussions along the way to completing it. We also thank the anonymous referees for many helpful suggestions. BO gratefully acknowledges support from NSF DMS-1461138.  JLM is supported by NSF Applied Math Grant DMS-1312874 and NSF CAREER Grant DMS-1352353.

\section{Preliminaries} \label{sec:bg} 
\subsection{Eigenvalues  of the weighted adjacency matrix associated to a pointset} \label{eq:eigW}
Here we discuss some basic properties of the eigenvalues of the weighted adjacency matrix, $W^\x$, 
defined in \eqref{eq:W}, associated to a pointset, $\{ \x_j \}$.
Let $\mu_i^\x$  for $i\in [n]$ denote the eigenvalues of $W^\x$. Since $W^\x$ is symmetric, its eigenvalues are real and characterized by the Courant minimax principle, 
\begin{equation*}
\mu_{i}^\x = \max_{C \in \mathbb R^{(i-1) \times n}} \  \min_{\substack{ \|v\|=1 \\ C v = 0}} \  \langle v, W^\x v \rangle .  
\end{equation*} 
The sum of the eigenvalues is given by the trace of $W^\x$,
$$
\sum_{i \in [n] } \mu_i^\x = \text{tr} W^\x = 0.
$$
We have that $\|W^\x\| \leq d_{+}^\x$ where $d^\x_+ := \max_{i\in [n]} d^\x_i $. It follows that all eigenvalues are contained in the interval $[- d_{+}^\x, d_{+}^\x]$. If $f$ is a positive definite function, then considerably more can be proven about the spectrum of $W^\x$ \cite{Wendland2005}. We do not assume $f$ to be positive definite here. 

\subsection{Eigenvalues  of the graph Laplacian associated to a pointset} \label{eq:eigGraphLap}
Here we discuss some basic properties of the eigenvalues of the graph Laplacian, $L^\x$, defined in \eqref{eq:L}, associated to a pointset,  $\{ \x_j \}_{j \in [n]}$. We also discuss the spectral quantities that will later be optimized. 

Let $\{ \x_j \}_{j \in [n]} \subset M$ where $[n]$ is the enumeration for a collection of $n$ points and $M$ is a compact Riemannian manifold.  
As $L^\x$, defined in \eqref{eq:L}, is the graph Laplacian for a graph with non-negative graph weights, 
the spectral properties of this matrix are well-studied \cite{Mohar:1991,Chung:1997,Biyikoglu:2007}. 
Let $\lambda_i^\x$  for $i\in [n]$ denote the eigenvalues of the graph Laplacian associated with a particular pointset configuration. Since $L^\x$ is symmetric, its eigenvalues are real and characterized by the Courant minimax principle, 
\begin{equation}
\label{eq:minimax}
\lambda_{i}^\x = \max_{C \in \mathbb R^{(i-1) \times n}} \  \min_{\substack{ \|v\|=1 \\ C v = 0}} \  \langle v, L^\x v \rangle .  
\end{equation} 
 The sum of the eigenvalues is given by the trace of the graph Laplacian 
\begin{equation}
\label{eq:trL}
\sum_{i\in [n]} \lambda_i^\x = \textrm{tr} L^\x = \sum_{i \in [n]} d_i^\x = \sum_{i,j \in [n]} W^\x_{i,j}. 
\end{equation}
This is precisely the quantity given in \eqref{eq:genPairEnergy}. 

The graph Laplacian $L^\x = D^\x - W^x$ has another decomposition, which reveals several additional   spectral properties. Let  $B\in \mathbb R^{\binom{n}{2} \times n}$ be the arc-vertex incidence matrix (a.k.a. graph gradient) for a complete directed graph $G=(V,E)$ on $|V| = n$ nodes,
\begin{equation*}
\label{eq:incidence}
B_{k,j} = \begin{cases}  
1 & j = \text{head} (k) \\
-1 &j = \text{tail} (k)  \\
0 & \text{otherwise}.
\end{cases}
\end{equation*}
Here, we have used the terminology that if an arc $k=(i,j)$ is directed from node $i$ to node $j$ then $i$ is the \emph{tail} and $j$ is the \emph{head} of  arc $k$. The arc orientations (heads and tails of arcs) can be chosen arbitrarily; we use the convention that for edge $\{i,j\}$,  $i = \text{head}(k)$ and $j=\text{tail}(k)$ if $i>j$. 
We construct a weight vector $w^\x \in \mathbb R^{\binom n 2}_+$ by 
\begin{equation}
\label{eq:vecw}
w^\x_k := f\big(d^2(\x_i, \x_j) \big) = W_{ij}^\x 
\qquad \text{for edge} \ k = \{i,j\}, 
\end{equation}
where the weight matrix, $W^\x$, is defined in  \eqref{eq:W}.
The graph Laplacian can then be decomposed  
\begin{equation}
\label{eq:divgrad}
L^\x = B^{t} \ \text{diag}(w^\x) \ B,
\end{equation}
where $\text{diag}(w^\x)$ is a diagonal matrix with entries given by $w^\x$. 
From this div-grad decomposition, it is easily  seen that the inner product in \eqref{eq:minimax} can be rewritten 
$\langle v, L^\x v \rangle = \| B v \|^2_{w^\x}$, where $\| f \|^2_{w^\x} := \sum_{k\in E} w_k f_k^2$. 
It follows that all eigenvalues are contained in the interval $[0, 2 d_{+}^\x]$, where $d^\x_+ := \max_{i\in V} d^\x_i $. The first eigenvalue  $\lambda_{1}^\x$, is zero with corresponding eigenvector $v_{1}=1$. 
The second eigenvalue, $\lambda_{2}^\x$, is nonzero if and only if the graph is connected and characterized by
\begin{equation}
\label{eq:lam2}
\lambda_{2}^\x = \min_{\substack{ \|v\|=1 \\ \langle v, 1 \rangle = 0}} \ \| B v \|^2_{w^\x} .  
\end{equation}
The second eigenvalue is also referred to as the \emph{algebraic connectivity} as it is closely related to the other notions of connectivity of a graph \cite{Fiedler:1973,Ghosh:2006b,Ghosh:2006}. This graph invariant arises in the analysis of a variety of graph processes that describe, for example, information transfer rates for dynamical models
\cite{Bjorner:1991,Olfati-Saber:2007,Sun2004,boydDiaconisXiao2004}, robustness and stability in
inverse problems \cite{boumal2012,OBO2012,Osting:2012b} and synchronizability in complex networks \cite{Arenas2008}. It is also widely used in graph partitioning and data clustering algorithms \cite{shi2000,von2007tutorial} due to its close relationship to the Cheeger constant.
As a function of the graph weights $w^\x$, the algebraic connectivity is non-decreasing and concave. This can be seen from \eqref{eq:lam2} since $w^\x \mapsto \lambda_{2}^\x$ is the pointwise minimum of a family of  functions, each of which is linear in $w^\x$. 

The \emph{spectral radius} $\lambda^\x_{\text{max}}= \lambda^\x_n$, is characterized by 
\begin{equation}
\label{eq:LSpecRad}
\lambda_{n}^\x = \max_{ \|v\|=1 } \  \| B v \|^2_{w^\x} .  
\end{equation}
As a function of the graph weights $w^\x$, the spectral radius is a convex and non-decreasing function. 
This  can be seen from \eqref{eq:LSpecRad} since $w^\x \mapsto \lambda_{n}^\x$ is the pointwise maximum of a family of  functions, each of which is linear in $w^\x$. 

The \emph{total effective resistance} of the graph $G^\x$ associated with pointset $\{\x_i \}_{i=1}^n$  is defined 
\begin{equation}
\label{eq:EffRes}
R_{\mathrm{tot}}^\x := n \sum_{i\neq 1} \frac{1}{\lambda_i^\x}  = n \cdot \mathrm{tr} (L^\x)^\dag,
\end{equation}
where $\cdot^\dag$ denotes the Moore-Penrose pseudoinverse. As a function of the graph weights $w^\x$, the total  effective resistance is monotone decreasing and convex \cite{ghosh2008}.  The total effective resistance arises in the analysis of electrical networks, as well as in other applications involving Markov chains and continuous-time averaging networks. 
 
The \emph{condition number} of the graph Laplacian $L^\x$ associated with pointset $\{\x_i \}$  is defined 
 \begin{equation}
\label{eq:CondNum}
\kappa^\x := \lambda_n^\x/ \lambda_2^\x. 
\end{equation}
As a function of the graph weights, $w^\x$, the condition number is quasi-convex, {\it i.e.} has convex level sets. This follows from the observation that 
$$\kappa(w^\x) \leq \alpha  \iff  \lambda_n(w^\x) - \alpha \lambda_2(w^\x) \leq 0
$$
and $\lambda_n(w)$ and $-\lambda_2(w)$ are convex functions. 

\subsection{Notation for the dependence of a spectral invariant on a pointset configuration} \label{sec:Notation}
We consider the general  optimization problem  of minimizing a spectral invariant with respect to a  pointset configuration on a compact manifold $M$, 
\begin{equation}
\label{eqref:genOptProb}
\min \Big\{ g\left(\{ \x_i \}_{i\in[n]} \right) \colon \{ \x_i \}_{i\in[n]} \subset M \Big\}  . 
\end{equation}
It will be convenient to introduce some notation so that $g\colon M^n \to \mathbb R$ is the composition of functions
$$
g = \Phi \circ F \circ D^2. 
$$
Here, 
\begin{equation} \label{eq:Phi}
\Phi = J \circ \lambda \circ L \colon \mathbb R^{\binom n 2} \to \mathbb R
\end{equation}
 where $J\circ \lambda$ is a spectral function and $L\colon \mathbb R^{\binom n 2} \to {\bf S}^n$ maps graph weights to the weighted graph Laplacian, where notationally we have taken ${\bf S}^n$ to be the set of $n \times n$ real, symmetric matrices.
$J$ and $\lambda$ are further described in Appendix \ref{sec:sensitivity analysis}. 
 $F\colon\mathbb R^{\binom n 2} \to \mathbb R^{\binom n 2}$ defined  by $F_j( v) = f(v_j)$ is a function that applies element-wise a function $f \colon \mathbb R \to \mathbb R$, and  $D^2\colon M ^n  \to \mathbb R^{\binom n 2}$ is the vector of all of the squared distances of a pointset configuration $\{ \x_i \}_{i=1}^n \subset M$. The sensitivity of spectral invariants with respect to the pointset configuration is described in Appendix~\ref{sec:sensitivity analysis}. 

\section{Spectral optimal pointset configurations on spheres}  \label{sec:sphere}
In this section, we consider the problem of finding spectrally optimal pointset configurations on a sphere, $\mathbb S^{n-2} \subset \mathbb R^{n-1}$, 
\begin{equation}
\label{eq:genSphere}
\min  \Big\{  g\left(\{ \x_i \}_{i\in[n]} \right) \colon \{ \x_i \}_{i=1}^n \subset \mathbb S^{n-2} \Big\}. 
\end{equation}
For a general  number of points this is a difficult problem, but for $n\geq 3$ points on $\mathbb S^{n-2}$, there are a number of spectral objective functions where we can show that the optimal pointset configuration is the regular simplex.

The following proposition states that the regular simplex  is the minimizer for $ \mathrm{tr} L^\x$. Since  $\mathrm{tr} L^\x = \sum_{ i\neq j} \ f(| \x_i - \x_j |^2 )$ this problem reduces to the generalized Thompson problem \eqref{eq:genPairEnergy}. Although this result is well-known (see, {\it e.g.}, \cite{cohn2010}), 
we include a proof for completeness and also because it provides a template for the other spectral objective functions considered. We use the notation introduced in Section~\ref{sec:Notation}.

\begin{prop} \label{prop:SphereTrace} Let $f \colon (0,4] \to \mathbb R$ be any differentiable, decreasing, and convex function. Then the regular simplex attains the minimum in \eqref{eq:genSphere} with $g\left(\{ \x_i \}_{i\in[n]} \right) = \mathrm{tr} L^\x$, as defined in \eqref{eq:trL}. 
\end{prop}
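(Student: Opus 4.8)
The plan is to reduce the statement to a one-line Jensen argument combined with the elementary bound on the mean squared distance that characterizes the regular simplex. Realize each point as a unit vector $\x_i \in \mathbb R^{n-1}$, so that $|\x_i - \x_j|^2 = 2 - 2\langle \x_i, \x_j \rangle$, and abbreviate $N = n(n-1)$. The first step is the trivial observation
\[
0 \le \Big\| \sum_{i\in[n]} \x_i \Big\|^2 = n + \sum_{i \ne j} \langle \x_i, \x_j \rangle,
\]
which rearranges to $\sum_{i\ne j} |\x_i - \x_j|^2 = 2n(n-1) - 2\sum_{i\ne j}\langle \x_i,\x_j\rangle \le 2n^2$; equivalently the mean squared distance obeys $\frac1N \sum_{i\ne j} |\x_i - \x_j|^2 \le \frac{2n}{n-1} \le 4$ (so everything stays in the domain of $f$), with equality in the first inequality precisely when $\sum_i \x_i = 0$.

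Next, since $f$ is convex, Jensen's inequality applied to the $N$ values $|\x_i - \x_j|^2$ gives
\[
\frac1N \sum_{i\ne j} f\big(|\x_i - \x_j|^2\big) \ \ge\ f\Big( \frac1N \sum_{i\ne j} |\x_i - \x_j|^2 \Big),
\]
and since $f$ is decreasing while its argument on the right is at most $\tfrac{2n}{n-1}$, the right-hand side is in turn at least $f\big(\tfrac{2n}{n-1}\big)$. Chaining these two estimates and multiplying through by $N$ yields the universal lower bound
\[
\mathrm{tr}\,L^\x = \sum_{i\ne j} f\big(|\x_i - \x_j|^2\big) \ \ge\ n(n-1)\, f\Big( \tfrac{2n}{n-1} \Big)
\]
valid for every configuration $\{\x_i\}_{i\in[n]} \subset \mathbb S^{n-2}$.

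Finally I would check that the regular simplex attains this bound. Its $n$ vertices can be realized as unit vectors with $\langle \x_i, \x_j\rangle = -\tfrac{1}{n-1}$ for $i\ne j$ (the corresponding Gram matrix $\tfrac{n}{n-1}I - \tfrac{1}{n-1}J$ is positive semidefinite of rank $n-1$, hence embeds in $\mathbb R^{n-1}$), so that $\sum_i \x_i = 0$ and $|\x_i - \x_j|^2 = \tfrac{2n}{n-1}$ for all $i\ne j$; thus $\mathrm{tr}\,L^\x = n(n-1)\,f(\tfrac{2n}{n-1})$, matching the lower bound, and the minimum in \eqref{eq:genSphere} is attained. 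There is no serious obstacle here; the only point requiring care is getting the two inequalities to point the same way — convexity is used "upward" through Jensen, while monotonicity is used to convert the bound on the mean squared distance into a bound on $f$ of it. (Differentiability of $f$ is not actually needed for this particular invariant; it is listed because the same template, together with the first-order/sensitivity analysis of Appendix~\ref{sec:sensitivity analysis}, is reused for the non-smooth spectral functionals treated later.)
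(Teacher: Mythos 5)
Your proof is correct and rests on the same two ingredients as the paper's: convexity of $f$ together with the bound $\sum_{i\ne j}\langle \x_i,\x_j\rangle \ge -n$ obtained from $\bigl\|\sum_i \x_i\bigr\|^2 \ge 0$, with equality at the regular simplex. The only cosmetic difference is that you apply convexity via Jensen's inequality at the actual mean squared distance and then invoke monotonicity of $f$, whereas the paper applies the supporting-line (gradient) inequality of the convex function $\Phi\circ F$ at the simplex value $2n/(n-1)$ --- an equivalent one-line argument, which the authors prefer because it is the form that generalizes, via subdifferentials, to the non-smooth spectral invariants treated in the subsequent propositions (and you are right that differentiability of $f$ is not actually needed for the trace).
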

\begin{proof}
We first note that for any points, $\x,\y\in \mathbb S^{n-2}$, the Euclidian  distance is given by 
$|\x-\y|^2 = 2 - 2 \langle \x, \y \rangle $.
We also compute for $\x_i \in \mathbb S^{n-2}$, 
$\displaystyle 0 \leq \left| \sum_{i\in[n]} \x_i\right|^2 = n +  \sum^n_{i \neq j } \langle \x_i, \x_j \rangle $,
which implies that 
\begin{equation} \label{eq:sphereFact}
 \sum^n_{i> j} \langle \x_i, \x_j \rangle \geq - \frac{n}{2}.
\end{equation}
Note that $\Phi(w) = \mathrm{tr}L(w)$ is linear in each argument and therefore $\Phi  \circ F$ is a convex function since it is a positive linear combination of convex functions. Recall from Section \ref{sec:Notation} that $F$ is a function that applies element-wise the function $f$. 
 Thus, for any vectors $a,b \in \mathbb R^{\binom {n} 2}$, we have 
$$
\Phi \circ F(a) \geq \Phi \circ F(b) + \langle a-b , (\Phi \circ F)'(b) \rangle.
$$
We now take $a=d^2$ to be the vector of squared pairwise distances for an arbitrary configuration and $b$ to be the vector of squared pairwise distances for the regular simplex, $b = 2 + \frac{2}{n-1} = \frac{2n}{n-1} $. We compute
\begin{align*}
 \Phi \circ F(d^2) 
&\geq \Phi \circ F\left( \frac{2n}{n-1} \right ) + \Big\langle d^2 - \left(2 + \frac{2}{n-1} \right) , \nabla (\Phi \circ F) \left( \frac{2n}{n-1} \right) \Big \rangle \\
& = \binom{n}{2} f\left( \frac{2n}{n-1}  \right) + \sum_{i> j}^{n} \left( 2 \langle \x_i, \x_j \rangle + \frac{2}{n-1} \right) \left| f'\left(  \frac{2n }{n-1}\right) \right| \\
& \geq  \binom{n}{2} f\left( \frac{2n}{n-1}  \right),
\end{align*}
which is the value attained by the regular simplex. 
\end{proof}

\begin{rem}
The squared Frobenius norm of the adjacency matrix is given by 
$$
\| A^\x \|_F^2 = \sum_{i\neq j} f(d_{ij}^2)^2.
$$
Thus, if $f$ is chosen so that $f^2$ is a differentiable, decreasing, and convex function,  it follows from Proposition~\eqref{prop:SphereTrace}  that the regular simplex attains $\min \big\{  \| A^\x \|_F^2 \colon \{\x_i\}_{i\in[n]} \subset \mathbb S^{n-2}  \big\}$. 
\end{rem}

\begin{prop}  \label{prop:SphereEffRes} Let $f \colon (0,4] \to \mathbb R$ be any differentiable, increasing, and concave function. 
 Then the regular simplex attains the minimum in \eqref{eq:genSphere} with $g\left(\{ \x_i \}_{i\in[n]} \right) = R_{\mathrm{tot}}^\x $,  as defined in \eqref{eq:EffRes}. 
\end{prop}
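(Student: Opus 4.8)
The plan is to transcribe the proof of Proposition~\ref{prop:SphereTrace} almost verbatim, with two substitutions: the linear map $\Phi = \mathrm{tr}\,L$ is replaced by $\Phi = R_{\mathrm{tot}} = n\,\mathrm{tr}\big(L(\cdot)\big)^{\dagger}$ (as in \eqref{eq:EffRes}), and the ``positive linear combination of convex functions'' observation is replaced by a composition-of-convexity argument. First I would record the same two geometric facts used there: for $\x,\y \in \mathbb S^{n-2}$ one has $|\x-\y|^2 = 2 - 2\langle\x,\y\rangle$, and \eqref{eq:sphereFact} gives $\sum_{i>j}\langle\x_i,\x_j\rangle \geq -\tfrac n2$. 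Rewriting, the vector $d^2 \in \mathbb R^{\binom n2}$ of squared pairwise distances of an arbitrary configuration satisfies
\[
\Big\langle d^2 - \tfrac{2n}{n-1}\mathbf 1,\ \mathbf 1 \Big\rangle = \sum_{i>j}\Big(d^2(\x_i,\x_j) - \tfrac{2n}{n-1}\Big) \leq 0,
\]
with equality for the regular simplex, whose squared pairwise distances all equal $\tfrac{2n}{n-1}$.

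Next I would check that $\Phi\circ F$ is convex on the set of weight vectors for which the associated graph is connected. By \cite{ghosh2008} (recalled in Section~\ref{sec:bg}), $w\mapsto R_{\mathrm{tot}}(w)$ is convex and non-increasing in each coordinate on that set; since $f$ is concave, $v\mapsto F(v)_k = f(v_k)$ is concave; and the composition of a convex non-increasing function with a concave function is convex. Hence the first-order convexity inequality applies exactly as in Proposition~\ref{prop:SphereTrace}: with $a = d^2$ and $b = \tfrac{2n}{n-1}\mathbf 1$ (the regular-simplex weight argument),
\[
\Phi\circ F(d^2) \geq \Phi\circ F(b) + \Big\langle d^2 - b,\ (\Phi\circ F)'(b) \Big\rangle .
\]
Here I would assume, as is implicit in \eqref{eq:W}, that $f$ is positive, so that the complete graph on $[n]$ with all weights $f(\tfrac{2n}{n-1})$ is connected and $\Phi\circ F(b) = R_{\mathrm{tot}}$ evaluated at the regular simplex; for disconnected configurations $R_{\mathrm{tot}} = +\infty$ and the bound is trivial.

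It then remains to show the inner-product term is non-negative. By the chain rule (Appendix~\ref{sec:sensitivity analysis}), $(\Phi\circ F)'(b) = \mathrm{diag}\big(f'(b)\big)\,\nabla\Phi(F(b))$. Since $F(b)$ is the all-equal weight vector of $K_n$, which is fixed by the full symmetric group acting on the $\binom n2$ coordinates, $\nabla\Phi(F(b)) = -c\,\mathbf 1$ for a scalar $c$, and $c\geq 0$ since $R_{\mathrm{tot}}$ is non-increasing in the weights. (Differentiability of $R_{\mathrm{tot}}$ at this point holds despite $\lambda_2 = \cdots = \lambda_n$, because $\mathrm{tr}(L^\dagger)$ is a smooth symmetric function of the nonzero eigenvalues wherever the graph stays connected; cf.\ Appendix~\ref{sec:sensitivity analysis}.) Since $f$ is increasing, $f'(\tfrac{2n}{n-1}) \geq 0$, so
\[
\Big\langle d^2 - b,\ (\Phi\circ F)'(b) \Big\rangle = -\,c\,f'\!\big(\tfrac{2n}{n-1}\big)\Big\langle d^2 - b,\ \mathbf 1 \Big\rangle \geq 0
\]
by the first paragraph, and therefore $\Phi\circ F(d^2) \geq \Phi\circ F(b)$, the value attained by the regular simplex.

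The one nonroutine point — and the main obstacle — is the regularity and positivity bookkeeping near the simplex: one must ensure the regular simplex lies in the domain where $R_{\mathrm{tot}}$ is finite and differentiable, which forces a positivity hypothesis on $f$ and a point with the nonzero Laplacian eigenvalue of multiplicity $n-1$. The clean way around this is to differentiate $\mathrm{tr}(L^\dagger)$ directly (via a resolvent/contour-integral formula) rather than individual eigenvalues, using the sensitivity computations of Appendix~\ref{sec:sensitivity analysis}; everything else is a verbatim transcription of the proof of Proposition~\ref{prop:SphereTrace}.
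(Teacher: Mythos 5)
Your proposal is correct and follows essentially the same route as the paper's proof: convexity of $R_{\mathrm{tot}}\circ F$ as a convex non-increasing function composed with a concave one, the first-order convexity inequality at the regular-simplex weight vector, the gradient there being a constant non-positive vector by symmetry and monotonicity, and the sign of the inner product coming from \eqref{eq:sphereFact}. The only difference is that you make explicit the connectivity/differentiability bookkeeping at the all-equal weight point, which the paper handles implicitly by citing Proposition~\ref{prop:objFunDiffL}.
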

\begin{proof} Let $\Phi = R_{tot}\colon \mathbb R^{\binom n 2} \to \mathbb R$ be the total effective resistance of a graph as a function of the edge weights. The total effective resistance is a convex and monotone decreasing function in each argument. 
We can express $R_{\mathrm{tot}}^\x$ in \eqref{eq:EffRes} as $R_{\mathrm{tot}}^\x = R_{tot} \circ F \circ D^2$ 
where $F$ and $D$ are as in Section \ref{sec:Notation}. 
Thus if $f$ is concave, the composition $R_{\mathrm{tot}} \circ F$ is convex. Let $d^2$ be a vector of squared pairwise distances for an arbitrary configuration. We compute
\begin{align*}
 \Phi \circ F(d^2) 
&\geq \Phi \circ F\left( \frac{2n}{n-1} \right ) -2 \langle r ,g \rangle 
\end{align*}
where  $r_\ell = \langle \x_i, \x_j \rangle + \frac{1}{n-1}$ where $\ell = (i,j)$ and 
$g = \nabla  (\Phi \circ F) (\frac{2n}{n-1})$.  
By Proposition~\ref{prop:objFunDiffL}, the gradient is a constant vector, which can also be seen from symmetry. Since $f$ is increasing and $R_{\mathrm{tot}}$ is decreasing, the constant is negative. By \eqref{eq:sphereFact}, we have that $\sum_\ell r_\ell \geq 0$ which implies that $- 
\langle r, g  \rangle \geq 0$. We conclude that 
$ \Phi \circ F(d^2)  \geq  \Phi \circ F \left( \frac{2n}{n-1}  \right)$,
which is the value attained by the regular simplex. 
\end{proof}

\begin{prop} \label{prop:SphereMaxEig} Let $f \colon (0,4] \to \mathbb R$ be any differentiable, decreasing, and convex function. 
Then the regular simplex attains the minimum in \eqref{eq:genSphere}
with $g\left(\{ \x_i \}_{i\in[n]} \right) = \lambda_{n} (L^\x)$, as defined in \eqref{eq:LSpecRad}.
\end{prop}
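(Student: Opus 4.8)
The plan is to follow the template of Propositions~\ref{prop:SphereTrace} and \ref{prop:SphereEffRes}, but using a \emph{subgradient} of the objective in place of its gradient, since $\lambda_n$ is not differentiable at the regular simplex (there the top eigenvalue has multiplicity $n-1\ge 2$). With the notation of Section~\ref{sec:Notation}, write $g = \Phi\circ F\circ D^2$, where now $\Phi = \lambda_n\circ L\colon\mathbb R^{\binom n2}\to\mathbb R$ is the spectral radius of the weighted graph Laplacian. Recall from the discussion after \eqref{eq:LSpecRad} that $\Phi$ is convex and coordinate-wise non-decreasing. Since $f$ is convex and decreasing on $(0,4]$, the standard rule for composing a convex non-decreasing function with convex functions (applied coordinate-wise through $F$) shows that $h:=\Phi\circ F$ is convex and coordinate-wise non-increasing on $(0,4]^{\binom n2}$.

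Next I would produce a constant subgradient of $h$ at the squared-distance vector of the regular simplex, $b = \tfrac{2n}{n-1}\mathbf 1$ (computed as in the proof of Proposition~\ref{prop:SphereTrace}). For $n\ge 3$ the entries of $b$ lie in $(2,3]$, so $b$ is interior to the domain and $\partial h(b)$ is a nonempty compact convex set. The symmetric group $S_n$ acts on $\mathbb R^{\binom n2}$ by permuting edge-coordinates, $h$ is invariant under this action (because $F$ acts entrywise, $L$ transforms by the corresponding vertex permutation matrix, and $\lambda_n$ is invariant under orthogonal conjugation), and $b$ is fixed by it; hence $\partial h(b)$ is $S_n$-invariant, and averaging any one of its elements over $S_n$ yields a subgradient $s^\star\in\partial h(b)$ that is itself $S_n$-invariant. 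Because $S_n$ acts transitively on the edges of $K_n$, $s^\star = c\,\mathbf 1$ for a scalar $c$, and $c\le 0$ because $h$ is coordinate-wise non-increasing. (Equivalently, one may read $s^\star$ off the subdifferential formula for $\lambda_n$ in Proposition~\ref{prop:objFunDiffL}: at $b$ the Laplacian is $f(\tfrac{2n}{n-1})(nI-J)$, with $J$ the all-ones matrix and top eigenspace $\mathbf 1^\perp$, and $\tfrac{1}{n-1}(I-\tfrac1n J)\in\partial\lambda_n$ there pairs to the constant $\tfrac{2}{n-1}$ against every edge Laplacian.)

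The conclusion then follows from the subgradient inequality at $b$: for an arbitrary configuration $\{\x_i\}\subset\mathbb S^{n-2}$ with squared-distance vector $a = D^2(\{\x_i\})$,
\[
g(\{\x_i\}) = h(a)\ \ge\ h(b) + \langle a-b,\,c\,\mathbf 1\rangle = h(b) + c\sum_{i>j}\bigl(|\x_i-\x_j|^2-\tfrac{2n}{n-1}\bigr).
\]
As in Proposition~\ref{prop:SphereTrace}, using $|\x_i-\x_j|^2 = 2-2\langle\x_i,\x_j\rangle$ one gets $\sum_{i>j}\bigl(|\x_i-\x_j|^2-\tfrac{2n}{n-1}\bigr) = -2\bigl(\sum_{i>j}\langle\x_i,\x_j\rangle+\tfrac n2\bigr)\le 0$ by \eqref{eq:sphereFact}; since also $c\le 0$, the last term is non-negative, whence $g(\{\x_i\})\ge h(b) = \lambda_n\bigl(f(\tfrac{2n}{n-1})(nI-J)\bigr) = n\,f(\tfrac{2n}{n-1})$, the value attained by the regular simplex. (If $f(\tfrac{2n}{n-1})\le 0$ the asserted minimum value is $0$, and the bound is trivial because every graph Laplacian has $\mathbf 1$ in its kernel, so $\lambda_n\ge 0$.)

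The one genuinely new ingredient compared with the earlier propositions is the construction of the constant subgradient $s^\star$; this is exactly where the non-differentiability of $\lambda_n$ bites, and the symmetrization/group-averaging argument (or the explicit description of $\partial\lambda_n$) is what replaces the ``gradient is constant by symmetry'' remark used for the trace and effective resistance. Everything else is a routine adaptation of the template, so I expect that step to be the only real obstacle.
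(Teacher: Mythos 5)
Your proof is correct, and it shares the paper's overall scaffolding (convexity of $\Phi\circ F$, the subgradient inequality at the squared-distance vector $b=\tfrac{2n}{n-1}\mathbf 1$ of the regular simplex, and reduction to \eqref{eq:sphereFact}), but it resolves the crux --- the non-differentiability of $\lambda_n$ at the simplex, where the top eigenvalue has multiplicity $n-1$ --- by a genuinely different device. The paper does not construct a constant subgradient; it notes explicitly that $\mathbf 1\notin\partial\Phi(\mathbf 1)$ and instead, for each configuration, exhibits a configuration-dependent element $h=(B\psi)^2\in\partial\Phi(\mathbf 1)$ with $\langle r,h\rangle\ge 0$, where $r_\ell=\langle \x_i,\x_j\rangle+\tfrac{1}{n-1}$: it maximizes $\langle r,(B\psi)^2\rangle$ over unit $\psi\perp\mathbf 1$, identifies the maximum as $\mu_{\max}+\tfrac{n}{n-1}$ with $\mu_{\max}$ the top eigenvalue of $B^t\,\mathrm{diag}(\omega)B$, $\omega_\ell=\langle\x_i,\x_j\rangle$, and bounds $\mu_{\max}$ below by the normalized trace, which again reduces to \eqref{eq:sphereFact}. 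Your $S_n$-averaging argument instead manufactures a single configuration-independent subgradient $c\mathbf 1\in\partial(\Phi\circ F)(b)$ with $c\le 0$ (explicitly $c=\tfrac{2}{n-1}f'\big(\tfrac{2n}{n-1}\big)$, from $Y=\tfrac{1}{n-1}\big(I-\tfrac1nJ\big)$), after which the computation is literally that of Proposition~\ref{prop:SphereTrace}. This is shorter and isolates exactly the properties of $\lambda_n$ being used (convexity, coordinate-wise monotonicity, permutation invariance), so it would apply verbatim to any permutation-invariant convex non-decreasing spectral function of the weights and avoids the trace estimate entirely. What the paper's route buys is an explicit handle on the extremal subgradient direction in terms of eigenvectors, which it then reuses with signs flipped in the superdifferential argument for the algebraic connectivity. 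Your edge-case remarks (interiority of $b$ in $(0,4]^{\binom n2}$ and the trivial case $f\big(\tfrac{2n}{n-1}\big)\le 0$) are also correct.
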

\begin{proof} 
The spectral function $\Phi (w) = \lambda_n(w)$ is convex and non-decreasing in each argument. Since $f$ is assumed to be convex, it follows that $\Phi\circ F$ is a convex function. 
Let $\partial f$ denote the subdifferential  of the function $f$. Recall that the \emph{subdifferential} of $f\colon \mathbb R^n \to \mathbb R$ at the point $\bar x$ is the set-valued map given by 
$$
\partial f(\bar x) = \{ \phi \in \mathbb R^n \colon \langle \phi, x-\bar x \rangle \leq f(x) - f(\bar x) \text{ for all } x\in \mathbb R^n \}; 
$$
see, for example, \cite{BoLe2006}. As in the proof of Proposition~\ref{prop:SphereTrace}, let $d^2$ to be the vector of squared pairwise distances for an arbitrary configuration. For any subderivative $g \in \partial ( \Phi \circ F) \left( \frac{2n}{n-1} \right)$, we have
\begin{subequations}
\label{eq:LamMax}
\begin{align}
 \Phi \circ F(d^2) 
&\geq \Phi \circ F\left( \frac{2n}{n-1} \right ) + \Big\langle d^2 - \left(2 + \frac{2}{n-1} \right) , g \Big \rangle \\
&\geq \Phi \circ F\left( \frac{2n}{n-1} \right ) + 2 \left| f'\left(\frac{2n}{n-1}\right) \right|  f\left( \frac{2n}{n-1} \right) \langle r , h  \rangle 
\end{align}
\end{subequations}
Here $h \in \partial \Phi(1)$ and $r \in \mathbb R^{\binom n 2}$ with components $r_\ell = \langle x_i, x_j \rangle + \frac{1}{n-1}$ where $\ell = (i,j)$. We also used the fact that 
$$\partial ( \Phi \circ F) \left( \frac{2n}{n-1} \right) = - \left| f'\left(\frac{2n}{n-1}\right) \right|  f\left( \frac{2n}{n-1} \right)   \partial \Phi(1).$$ 
It remains to show that there exists $h\in \partial \Phi(1)$ such that $\langle r, h \rangle \geq 0$. We note that $1 \notin \partial \Phi(1)$ (otherwise  the proof could be reduced to the proof of  Proposition~\ref{prop:SphereTrace}). 

It is not difficult to show for any $\psi \in \mathbb R^n$ satisfying $\langle \psi, 1\rangle = 0$ and $\|\psi \|=1$, we have 
$$h = (B\psi)^2 \in \partial \Phi(1).$$
Here $\cdot^2$ should be interpreted as an element wise operation. It follows that
\begin{subequations}
\label{eq:maxrh}
\begin{align}
\max_{h \in \partial \Phi(1)} \langle r, h \rangle 
&= \max_{\substack{\|\psi\|=1 \\ \langle \psi,1 \rangle = 0}} \langle r, (B \psi)^2 \rangle \\
\label{eq:maxrhb}
&= \max_{\substack{\|\psi\|=1 \\ \langle \psi,1 \rangle = 0}} \sum_{i>j} \langle x_i, x_j \rangle ( \psi_i - \psi_j)^2 + \frac{1}{n-1} \sum_{i>j} ( \psi_i - \psi_j )^2\\
\label{eq:maxrhc}
& = \mu_{max} + \frac{n}{n-1}.
\end{align}
\end{subequations}
The second term in \eqref{eq:maxrhb} simplifies because $\langle \psi, 1 \rangle=0$ implies that $B^t B \psi = n \psi$. 
The first term in \eqref{eq:maxrhb} can be viewed as the largest eigenvalue of the matrix $B^t \text{diag}(\omega) B$ where $\omega = r -  \frac{1}{n-1}$, which we denote by $\mu_{max}$ in \eqref{eq:maxrhc}. Letting $\mu_i$ for $i=1,\ldots,n$ be the eigenvalues of $B^t \text{diag}(\omega) B$, and noting that at least one of the eigenvalues is zero, we  compute
$$\mu_{max} \geq \frac{1}{n-1} \sum_{\mu_i \neq 0} \mu_i 
= \frac{1}{n-1} \sum_{i} \mu_i 
= \frac{1}{n-1} \text{tr} \left( B^t \text{diag}(\omega) B \right) 
= \frac{2}{n-1} \sum_{i>j} \langle x_i, x_j \rangle  
\geq - \frac{n}{n-1},
$$
where the last line follows from \eqref{eq:sphereFact}. We have  shown that there exists $\hat h \in \partial \Phi(1)$ attaining the maximum in  \eqref{eq:maxrh} with $\langle r, \hat h \rangle \geq 0$. The result now follows from \eqref{eq:LamMax}.
 \end{proof}

\begin{prop} Let $f \colon (0,4] \to \mathbb R$ be any differentiable, decreasing, and concave function. 
Then the regular simplex attains the maximum in \eqref{eq:genSphere} with $g\left(\{ \x_i \}_{i\in[n]} \right) = \lambda_{2} (L^\x)$, as defined in \eqref{eq:lam2}.
\end{prop}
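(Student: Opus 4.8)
\emph{The plan} is to run the concave analogue of the argument for Proposition~\ref{prop:SphereMaxEig}. From the variational formula \eqref{eq:lam2}, $\Phi(w)=\lambda_2(w)$ is concave and non-decreasing in the weights, being a pointwise minimum of linear functionals; since $f$ is concave, the composition $\Phi\circ F$ is again concave. Writing $b=\tfrac{2n}{n-1}$ for the common squared edge length of the regular simplex and $d^2$ for the squared-distance vector of an arbitrary configuration, I would invoke the supergradient inequality: for every $g\in\partial(\Phi\circ F)(b)$,
\[
\Phi\circ F(d^2)\ \le\ \Phi\circ F(b)+\big\langle d^2-b,\,g\big\rangle .
\]
Because this holds for \emph{every} supergradient, it suffices to exhibit a single $g$ for which the correction term is $\le 0$, which would establish that the simplex is extremal.

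To evaluate $\partial(\Phi\circ F)(b)$ I would combine the chain rule with the degree-one homogeneity of $\lambda_2$: since every entry of $b$ is equal and $F'(b)=f'(b)I$, one gets $\partial(\Phi\circ F)(b)=f'(b)\,\partial\Phi(\mathbf 1)$, where $\partial\Phi(\mathbf 1)$ is the superdifferential of $\lambda_2$ at the all-ones weight vector. Because $B^{t}B=nI-J$ has second eigenvalue $n$ with eigenspace exactly $\mathbf 1^{\perp}$, this superdifferential is $\operatorname{conv}\{(B\psi)^2:\ \psi\perp\mathbf 1,\ \|\psi\|=1\}$, the very set appearing in \eqref{eq:maxrh}. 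Using $|\x-\y|^2=2-2\langle\x,\y\rangle$ to write $d^2-b=-2r$ with $r_\ell=\langle \x_i,\x_j\rangle+\tfrac1{n-1}$ for $\ell=(i,j)$, and recalling $f'(b)<0$, the inequality reduces to finding $h\in\partial\Phi(\mathbf 1)$ with $\langle r,h\rangle\le 0$. Exactly as in \eqref{eq:maxrh}--\eqref{eq:maxrhc} one computes $\min_{h}\langle r,h\rangle=\nu_{\min}+\tfrac{n}{n-1}$, where $\nu_{\min}$ is the smallest eigenvalue, restricted to $\mathbf 1^{\perp}$, of $M=B^{t}\operatorname{diag}(\langle \x_i,\x_j\rangle)B$. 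Hence everything comes down to the bound $\nu_{\min}\le-\tfrac{n}{n-1}$. (As a cross-check, testing \eqref{eq:lam2} against the vectors $\psi^{(u)}_p=\langle u,\x_p\rangle$ and averaging over unit $u\in\mathbb R^{n-1}$ yields the upper bound $\lambda_2\le n\sum_{i>j}f(d_{ij}^2)d_{ij}^2\big/\sum_{i>j}d_{ij}^2$, which reduces the claim to $\sum_{i>j}f(d^2_{ij})d^2_{ij}\le f(b)\sum_{i>j}d^2_{ij}$ and, after invoking the concavity of $t\mapsto t f(t)$, lands on the same crux.)

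The hard part is precisely this last eigenvalue inequality, and it is where the present case genuinely departs from Proposition~\ref{prop:SphereMaxEig}. There one needed $\mu_{\max}\ge-\tfrac{n}{n-1}$, which followed from $\mu_{\max}\ge\tfrac1{n-1}\operatorname{tr}M=\tfrac{2}{n-1}\sum_{i>j}\langle\x_i,\x_j\rangle$ together with \eqref{eq:sphereFact}. Here the analogous averaging step only gives $\nu_{\min}\le\tfrac1{n-1}\operatorname{tr}M$, and \eqref{eq:sphereFact} bounds $\operatorname{tr}M$ from \emph{below} --- the wrong direction --- so a coarse trace estimate cannot close the argument. The correct sign must instead be extracted from the monotonicity of $f$ (as in the proof of Proposition~\ref{prop:SphereEffRes}) together with the constraint that the $n$ points lie in $\mathbb R^{n-1}$, i.e.\ that the Gram matrix $G=(\langle\x_i,\x_j\rangle)$ is positive semidefinite of rank at most $n-1$. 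Writing $M=\operatorname{Diag}(G\mathbf 1)-G$, it is the rank deficiency of $G$ (the existence of $z\neq 0$ with $\sum_p z_p\x_p=0$) that should force a sufficiently negative Rayleigh quotient on $\mathbf 1^{\perp}$, with equality realized by the simplex, where $G=\tfrac{n}{n-1}(I-\tfrac1nJ)$ and $\nu_{\min}=-\tfrac{n}{n-1}$. I expect this estimate --- and in particular reconciling the sign of the correction term with the stated monotonicity hypothesis on $f$ --- to be the decisive step of the proof.
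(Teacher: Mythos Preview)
Your sign analysis is correct, and the obstruction you isolate is genuine. With $f$ decreasing you correctly reduce the supergradient inequality to exhibiting some $h=(B\psi)^2$ (with $\psi\perp\mathbf 1$, $\|\psi\|=1$) satisfying $\langle r,h\rangle\le 0$, equivalently to the eigenvalue bound $\nu_{\min}\le -\tfrac{n}{n-1}$. This bound is \emph{false} in general: for $n=3$ take three points of $\mathbb S^1$ clustered near $(1,0)$; then every $r_\ell\to\tfrac32$ and $\langle r,(B\psi)^2\rangle\to\tfrac32\cdot 3=\tfrac92>0$ for all admissible $\psi$. The same configuration shows the proposition, as literally stated, is false: for any concave decreasing $f$ (e.g.\ $f(t)=4-t$) clustering drives all edge weights toward $f(0^+)>f\!\bigl(\tfrac{2n}{n-1}\bigr)$, and since $\lambda_2$ is nondecreasing in the weights this gives $\lambda_2$ strictly larger than at the simplex. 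So no argument along your lines---rank deficiency of the Gram matrix or otherwise---can rescue the claim under the stated hypothesis.

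The paper's proof, by contrast, asserts that ``an analogous argument to Proposition~\ref{prop:SphereMaxEig}'' yields $-\langle r,g\rangle\le 0$, i.e.\ $\langle r,g\rangle\ge 0$ for some $g=(B\psi)^2$. That is precisely the inequality $\mu_{\max}+\tfrac{n}{n-1}\ge 0$ established there via the trace bound and \eqref{eq:sphereFact}. For this to be the inequality one actually needs, the chain-rule factor $f'(b)$ must be \emph{positive}: with $f$ increasing and concave one gets $\Phi\circ F(d^2)\le\Phi\circ F(b)-2f'(b)\langle r,g\rangle$ for every $g=(B\psi)^2$, and then producing a single $g$ with $\langle r,g\rangle\ge 0$ finishes the proof exactly as in \eqref{eq:maxrh}--\eqref{eq:maxrhc}. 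In short, ``decreasing'' in the hypothesis should read ``increasing'' (parallel to Proposition~\ref{prop:SphereEffRes}); with that correction your outline and the paper's proof coincide, and the ``hard part'' you flagged evaporates.
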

\begin{proof}
The algebraic connectivity, $\lambda_2 \big( L(w) \big)$ is a non-decreasing and concave function of the graph weights $w$. Since $f$ is assumed to be concave, it follows that $\Phi  \circ F$ is a concave function. 
Let $d^2$ be a vector of squared pairwise distances for an arbitrary configuration. 
For any superderivative $g\in \{ (B \psi)^2 \colon L \psi = \lambda_2 \psi  \}$, we have
\begin{align*}
 \Phi \circ F(d^2) 
&\leq
 \Phi \circ F \left(\frac{2n}{n-1}  \right) - 2 \langle r,g  \rangle 
\end{align*}
where $r_\ell = \langle \x_i, \x_j \rangle + \frac{1}{n-1}$ for $\ell = (i,j)$. An analogous argument to that given in the proof of Proposition~\ref{prop:SphereMaxEig} shows that $- \langle r,g  \rangle \leq 0$. We conclude that 
$ \Phi \circ F(d^2)  \leq \Phi \circ F \left( \frac{2n}{n-1}  \right)$, which is the value attained by the regular simplex. 
\end{proof}

\section{Spectrally optimal lattices}  \label{sec:lat}
In this section, we discuss spectral properties of operators associated with Bravais lattices. These results are used in Section~\ref{sec:tori} for pointset configurations on flat tori.  

Let $\Lambda =B(\mathbb Z^d)$ denote the $d$-dimensional Bravais lattice with basis $B\in \mathbb R^{d\times d}$. The reciprocal (dual) lattice, $\Lambda^* = 2 \pi B^{-t}(\mathbb Z^d)$, consists of the set of vectors, $\xi$, such that $e^{\imath v\cdot \xi} = 1$ for every $v \in \Lambda$. The Brillouin zone, denoted $\mathcal B \subset \mathbb R^d$, is defined as the Voronoi  cell\footnote{The Voronoi cell is also sometimes referred to as the Dirichlet cell or Wigner-Seitz cell.} of the origin in the dual lattice. For $\psi\in \ell^2(\Lambda)$, the discrete Fourier transform and its inverse are defined 
\begin{align*}
&\hat \psi (\xi) = \mathcal F [\psi] (\xi) = \sum_{v\in \Lambda} e^{\imath \xi\cdot v} \psi(v) 
= \frac{1}{2} \sum_{v\in \Lambda} \cos( \xi\cdot v) \psi(v),  \qquad  \xi \in \mathcal B \\
&\psi(v) = \mathcal F^{-1} [\hat \psi] (v) = \frac{1}{|\mathcal B|} \int_{\mathcal B}  e^{-\imath \xi \cdot v} \hat{\psi}(\xi) \ d\xi,  \qquad  v \in \Lambda.
\end{align*}
Here, $|\mathcal B| = \left| \det\left(  2 \pi B^{-t}\right) \right| = (2 \pi)^d \left| \det B \right|^{-1}$ denotes the volume of the Brillouin zone, $\mathcal B$. We restrict our attention to two-dimensional lattices ($d=2$).

Let $f\colon \mathbb R\to \mathbb R$ be a non-negative function with sufficiently fast decay so that 
\begin{equation} \label{eq:fsum}
\sum_{u \in \Lambda\setminus \{ 0 \}}  f(\| u\|^2) < \infty.
\end{equation}
We consider the linear operator 
$ W_f$, defined by
$$
(W_f \psi) (v) := \sum_{u \in \Lambda\setminus \{v\}} f(\| u - v\|^2) \psi(u), \qquad v\in \Lambda.
$$
Note that by \eqref{eq:fsum},  $ W_f \colon \ell^2 (\Lambda) \to \ell^2(\Lambda)$. 
In what follows, since $f (r) $ is never evaluated at $r=0$, we assume $f(0) = 0$, so the sum can be taken over $u \in \Lambda$. 
For $u,v\in \Lambda$, this operator has  ``matrix elements''   $W_f(u,v) = f( \| u-v \|^2)$.  Note that $W_f$ is analogous the matrix $W^\x$, defined in \eqref{eq:W}. 
Observing that $W_f(u,v) = W_f(v,u)$ and 
$W_f(u+v,u) = W_f(v,0)$, we see that the operator is symmetric and acts by convolution. We also observe that 
$\text{tr}(W_f) = \sum_{u\in \Lambda} f (0) = 0$. 
Define the operator symbol (dispersion relation)
\begin{equation}
\label{eq:dispRel}
\omega_f(\xi) := \mathcal F[f( \| \cdot \|^2)](\xi), \qquad \qquad \xi \in \mathcal B. 
\end{equation}
In Figure~\ref{fig:dispRel}, we plot $\omega_f(\xi)$ for the square and triangular two-dimensional  lattices, with $f(r) = e^{-2r}$ for $r > 0$.  Note, since we are considering  We have that 
$$\omega_f(\xi) = \overline{\omega_f(\xi)} = \omega_f(-\xi)$$ 
implying that $\omega_f(\xi)$ is real and has an inversion symmetry. 
We also have that for all $\xi \in \mathcal B$,
$$
\omega_f(\xi) =  \sum_{u \in \Lambda} e^{\imath \xi \cdot u} f(\| u\|^2) 
\leq  \sum_{u \in \Lambda}  f(\| u\|^2)
= \omega_f(0),
$$
which shows that  $\omega_f$ attains its maximum at the origin. 

We compute 
\begin{align*}
W_f e^{\imath \xi \cdot v} = \sum_{u \in \Lambda} f(\|u-v \|^2) e^{\imath \xi \cdot u} 
= \sum_{w \in \Lambda}  f(\|w \|^2) e^{\imath \xi \cdot (v+w)} 
= \omega_f(\xi) e^{\imath \xi \cdot v}. 
\end{align*}
This shows that $W_F$ is diagonalized by the discrete Fourier transform, 
\begin{equation} \label{eq:AFdiag}
(W_f \psi )(v) = \mathcal F^{-1} \omega_f \mathcal F \psi 
= \frac{1}{|\mathcal B|} \int_{\mathcal B}  e^{-\imath \xi \cdot v} \omega_f (\xi) \hat{\psi}(\xi) \ d\xi.
\end{equation}
Note that $W_f$ is not a compact operator since  $e^{\imath \xi \cdot v} \notin \ell^2(\Lambda)$. 
By  Plancherel's theorem, we have that 
$$
\|W_f \psi \|_{\ell^2(\Lambda)} 
\leq \left(  \max_{\xi \in \mathcal B} \ | \omega_f(\xi) | \right) \| \psi \|_{\ell^2(\Lambda)} 
= \omega_f(0) \| \psi \|_{\ell^2(\Lambda)}, 
$$
which implies
\begin{equation}
\label{eq:normAf}
 \|W_f \|_{\ell^2(\Lambda) \to \ell^2(\Lambda)} = \omega_f(0) .
\end{equation}
Thus, $W_f  \colon \ell^2 (\Lambda) \to \ell^2(\Lambda)$ is a symmetric bounded  linear operator. We note that $W_f$ is a self-adjoint operator if $f$ has compact support \cite{MoharWoess1989}. 
The associated quadratic form
$$
\psi \mapsto \langle \psi,  W_f \psi \rangle = \int_{\mathcal B} \omega_f(\xi) | \hat \psi(\xi) |^2 \ d\xi
$$
is not  positive if $\omega_f(\xi)$ is not positive on $\mathcal B$. There are also conditions on $f$ which imply that $W_f+f(0)  \mathrm{Id}$ is a positive definite operator \cite{Wendland2005}. 

\bigskip

We also define the linear operator 
$ L_f  \colon \ell^2 (\Lambda) \to \ell^2(\Lambda)$, 
by
$L_f := D_f - W_f$, 
where $D_f$ is the operator which is just multiplication by the scalar $ \sum_{u\in \Lambda} f(\| u\|^2 ) = \omega_f(0)$. 
We refer to $L_f$ as the \emph{Laplacian} operator on the graph. It is the lattice analogue of the matrix $L^\x$, defined in \eqref{eq:L}. Since $D_f$ is a diagonal operator, the Laplacian is diagonalized by the discrete Fourier Transform, $L_f = \mathcal F^{-1} (\omega_f(0) - \omega_f) \mathcal F$. 
The associated quadratic form is given by 
$$
\psi \mapsto \langle \psi,  L_f \psi \rangle = \int_{\mathcal B} (\omega_f(0) - \omega_f(\xi))  | \hat \psi(\xi) |^2 \ d\xi. 
$$
Since $\omega_f(\xi)$ takes its maximum at the origin, the Laplacian is a semi-positive definite operator. 

\begin{figure}[t]
\begin{center}
\includegraphics[height=.37\linewidth]{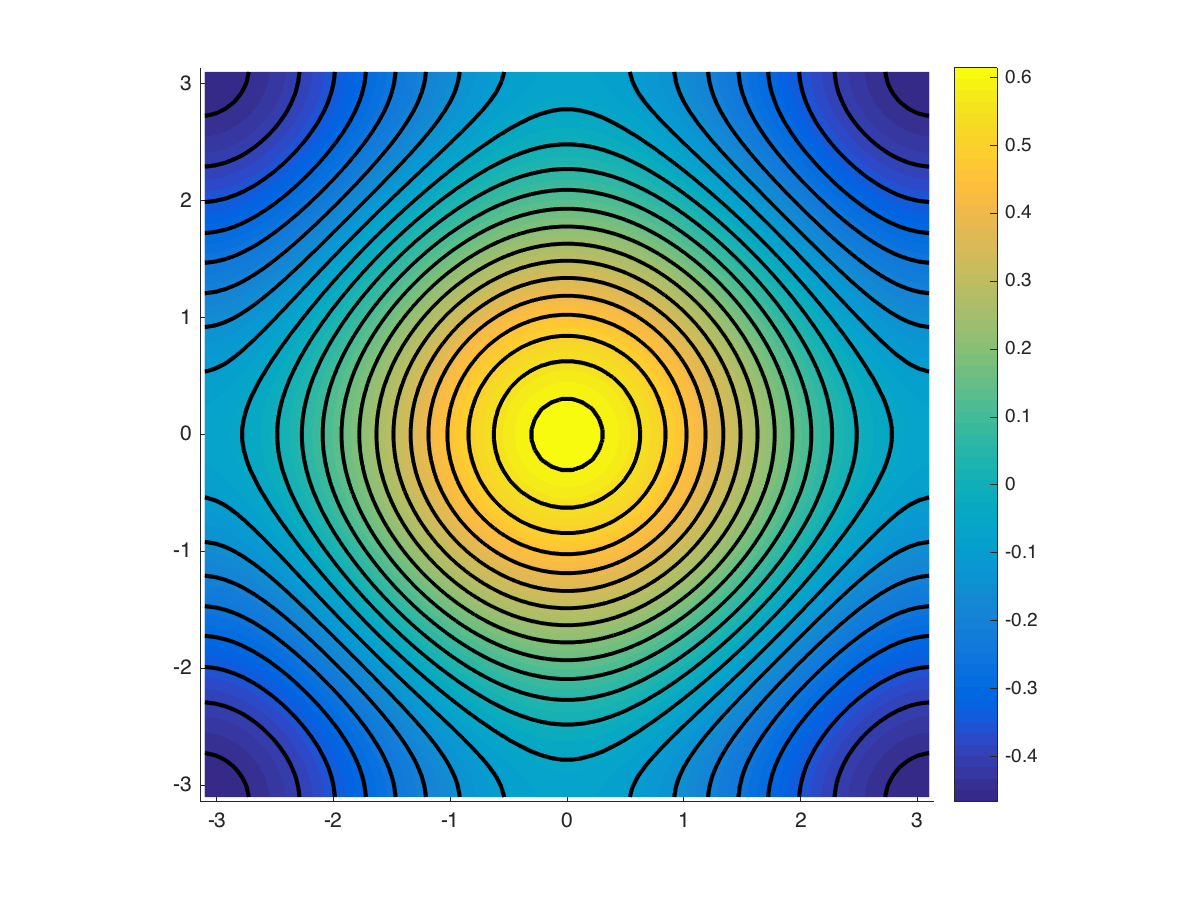}
\includegraphics[height=.37\linewidth]{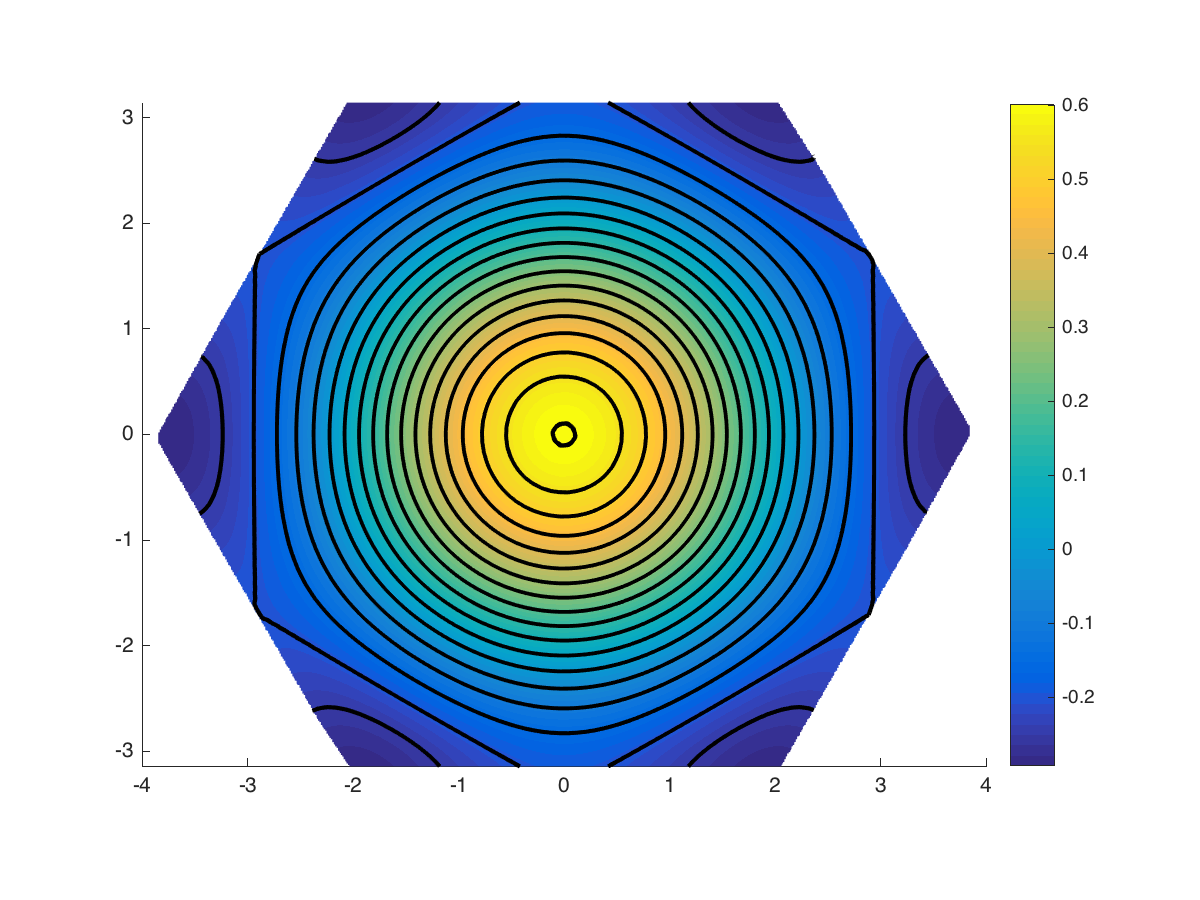}
\caption{The dispersion relation, $\omega_f(\xi)$, with $f(r) = e^{-2r}$, for the square lattice (left) and triangular lattice (right). The black lines indicate  level sets of $\omega_f(\xi)$. }
\label{fig:dispRel}
\end{center}
\end{figure}

\subsection*{Spectral decomposition and density  of $W_f$.}  
Let $\sigma$ denote the spectrum of $W_f$ and   $\sigma \ni \lambda = \omega_f (\xi)$. We interpret $\omega_f^{-1}(\lambda) \subset \mathcal B$ as the set of wavenumbers corresponding to frequency $\lambda$.  
From \eqref{eq:AFdiag} and the coarea formula, we arrive at the spectral decomposition of $W_f$, 
\begin{align*}
(W_f \psi)(v) &= \frac{1}{|\mathcal B|} \int_{\mathcal B}  e^{-\imath \xi \cdot v} \omega_f (\xi) \hat{\psi}(\xi) \ d\xi \\
& =\frac{1}{|\mathcal B|}  \int_{ \sigma }  \left[    \int_{\omega_f^{-1}(\lambda)}  e^{-\imath \xi \cdot v}  \omega_f (\xi) \hat{\psi}(\xi) \frac{1}{| \nabla \omega_f |} \ dH(\xi)  \right]  \ d\lambda \\
&=\int_{\sigma } \lambda \left[  \frac{1}{|\mathcal B|}  \int_{\omega_f^{-1}(\lambda)}  e^{-\imath \xi \cdot v}  \sum_{u\in \Lambda} e^{\imath \xi \cdot u} \psi(u)   \frac{1}{| \nabla \omega_f |} \ d H(\xi)  \right] \ d\lambda \\
& = \int_\sigma \lambda \  dE_\lambda[\psi](v)
\end{align*}
Here $H$ denotes the one-dimensional Hausdorff measure. 
The projection valued measure associated with $W_f$  is  defined by
\begin{equation} \label{eq:pvm} 
 \psi \mapsto dE_\lambda[\psi](v) := \left[  \frac{1}{|\mathcal B|}  \int_{\omega_f^{-1}(\lambda)}  e^{-\imath \xi \cdot v}  \sum_{u\in \Lambda} e^{\imath \xi \cdot u} \psi(u)   \frac{1}{| \nabla \omega_f |} \ dH(\xi)  \right] \ d\lambda. 
\end{equation}
The (Plancharel)  spectral measure for $W_f$ is absolutely continuous and given by
$$
h_f(\lambda_0) = - \frac{1}{\pi}  \Im \lim_{\epsilon \to 0+}  \int_{\sigma } (\lambda_0  - \lambda - \imath \epsilon)^{-1}  \left( \int_{\omega_f^{-1}(\lambda)}   \frac{1}{| \nabla \omega_f |}  \ dH(\xi) \right) d \lambda . 
$$
By Sokhatsky's formula, we have 
\begin{equation}
\label{eq:DOS}
h_f(\lambda) = \chi_\sigma(\lambda)    \int_{\omega_f^{-1}(\lambda)}  \frac{1}{| \nabla \omega_f |}  \ d H(\xi)  , 
\end{equation}
where $\chi_\sigma(\lambda)$ denotes the characteristic function on the spectrum of $W_f$. We think of $h_f(\lambda) d\lambda$ as giving a measure of the ``number of states'' in the frequency interval $[\lambda, \lambda + d\lambda]$ and, consequentially, in the physics literature, $h_f(\lambda)$ is referred to as the \emph{density of states} \cite{Economou1984,Lin2016}.  Roughly speaking, a high density of states at a specific frequency interval means that there are many states available for occupation.  In various applications it is useful to engineer a device which has either a large or small density of states at a particular frequency or in a particular frequency interval.

\begin{figure}[t]
\begin{center}
\includegraphics[width=.6\linewidth]{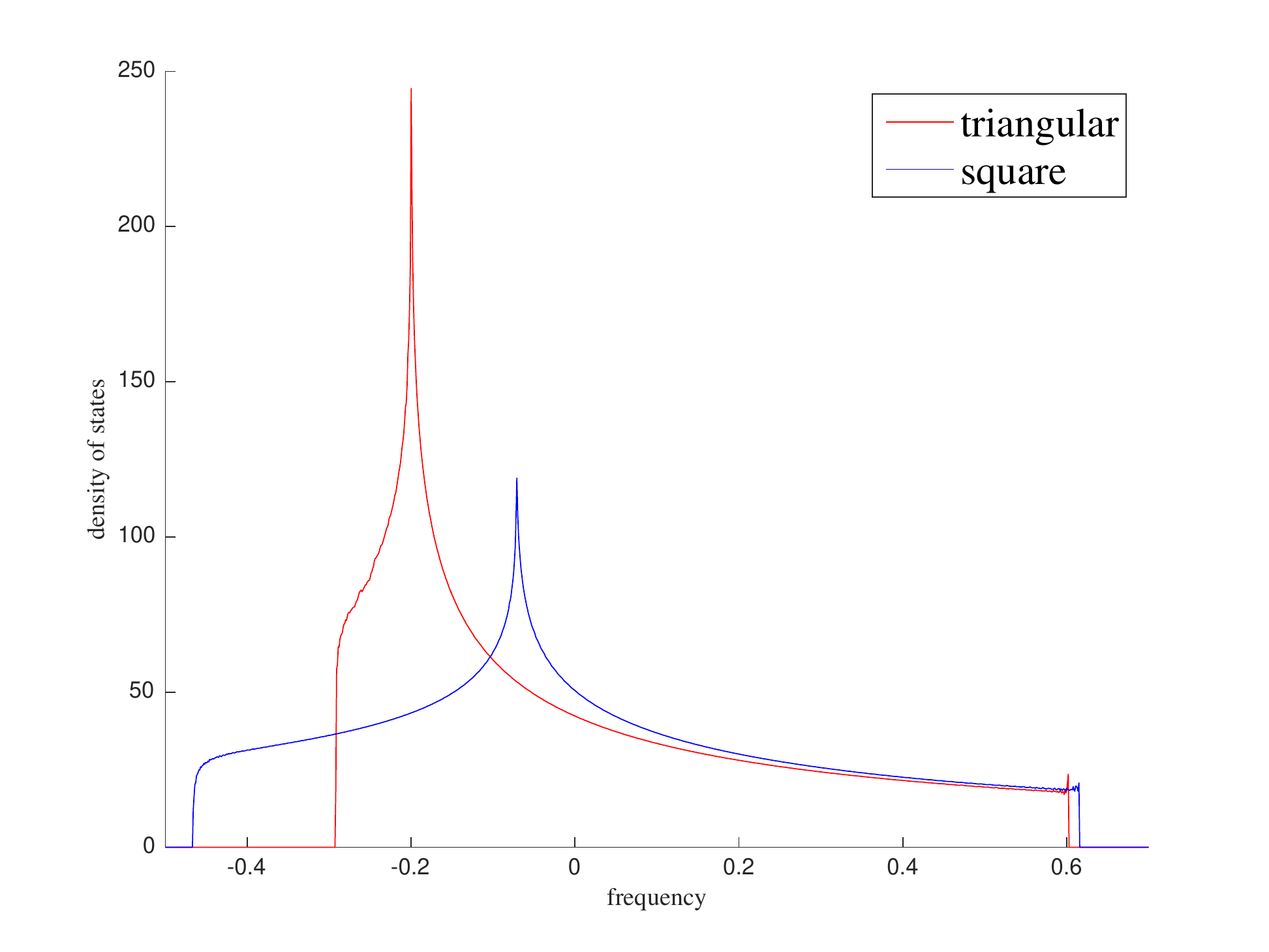}
\caption{A comparison of the density of states, $h_f(\lambda)$, defined in \eqref{eq:DOS}, with $f(r) = e^{-2r}$, for the (volume normalized) square and  triangular lattices. The dispersion relations, $\omega_f$, for these two lattices are plotted in Figure~\ref{fig:dispRel}.  }
\label{fig:spectralDensityTriVsSq}
\end{center}
\end{figure}

In Figure~\ref{fig:dispRel}, the black curves represent level sets of $\omega_f(\xi)$ for $\xi \in \mathcal B$.  In Figure~\ref{fig:spectralDensityTriVsSq}, we plot the spectral densities, $h_f(\lambda)$ with $f(r) = e^{-2r}$, for the square (blue) and  triangular (red) lattices. 
For this choice of function $f$, we observe Van Hove logarithmic singularities at certain values of the density of states: $\lambda \approx -0.2$ for the triangular lattice and $\lambda \approx -0.05$ for the square lattice \cite{VanHove1953,Economou1984}. 
The logarithm singularity is integrable and occurs at the  largest value $\lambda$ such that $\omega_f^{-1}(\lambda)\subset \mathcal B$ intersects $\partial \mathcal B$. 

It is natural to study the $p$-th moment of the  density of states, defined
\begin{equation} \label{eq:pMomAf}
M_{W_f}^p := \int_{\mathbb R} \lambda^p \ h_f(\lambda) \ d\lambda = \int_{\mathcal B} \omega^p_f(\xi) \ d\xi. 
\end{equation}
The first few moments are computed as follows. For $p=0$, we simply obtain 
$M^0_{W_f} = \int_{\mathcal B} \ d\xi = |\mathcal B|$. 
The first moment (mean) is given by 
\begin{align*}
M^1_{W_f} 
&= \langle 1 , \omega_f 1 \rangle_{L^2 (\mathcal B)} \\
&= \langle 1 , \mathcal F W_f {\mathcal F}^{-1} 1 \rangle_{L^2 (\mathcal B)}\\ 
&= |\mathcal B|  \langle \delta_0 , W_f \delta_0 \rangle_{\ell^2(\Lambda)} \\
&= |\mathcal B|  \langle \delta_0 , f(\| \cdot \|^2) \rangle_{\ell^2(\Lambda)} \\
&= 0,
\end{align*}
where we used the facts that $\mathcal F[\delta_0] (\xi) = 1$ and $\langle g, \hat \psi \rangle_{L^2(\mathcal B)} = |\mathcal B| \langle  \check g, \psi \rangle_{\ell^2(\Lambda)}$ for $g\in L^2(\mathcal B)$ and $\psi \in \ell^2(\Lambda)$. 
The first moment can be interpreted as a (normalized) trace of $W_f$. 
The second moment can be computed
\begin{subequations} \label{eq:2ndMomAf}
\begin{align}
M^2_{W_f} 
&= \langle 1 , \omega^2_f 1 \rangle_{L^2 (\mathcal B)} \\
&= \langle 1 , \mathcal F A^2_f {\mathcal F}^{-1} 1 \rangle_{L^2 (\mathcal B)} \\
&= |\mathcal B|  \langle W_f \delta_0 , W_f \delta_0 \rangle_{\ell^2(\Lambda)} \\
&= |\mathcal B|  \langle f(\| \cdot \|^2) , f(\| \cdot \|^2) \rangle_{\ell^2(\Lambda)} \\
&= |\mathcal B| \sum_{u \in \Lambda \setminus \{0\}} f^2 (\| u\|^2).
\end{align}
\end{subequations}

\subsection*{Spectral decomposition and density  of $L_f$.}  

The spectral decomposition for the Laplacian can be written 
$$
(L_f \psi) (v) = \int_{\sigma'} (\omega_f(0) - \lambda) \ dE_\lambda[\psi](v),
$$
where $\sigma'$ is the spectrum of the Laplacian and the projection valued measure is given in \eqref{eq:pvm}. Note that $\sigma ' = \omega_f(0) - \sigma$, where  $\sigma$ is the spectrum of $W_f$. 
The density of states at frequency $\lambda$  is given by $h_f(\omega_f(0)-\lambda)$ where $h_f$ is defined in \eqref{eq:DOS}. The $p$-th moment of the density of states associated with  the Laplacian is defined 
\begin{equation*} \label{eq:pMomLf}
M_{L_f}^p := \int_{\mathbb R} (\omega_f(0) - \lambda)^p \ h_f(\lambda) \ d\lambda = \int_{\mathcal B} \left(\omega_f(0) - \omega_f(\xi) \right)^p \ d\xi. 
\end{equation*}
The zeroth moment is computed $M^0_{L_f} = |\mathcal B|$. The first moment is given by 
\begin{equation} \label{eq:1stMomLf}
M^1_{L_f} = |\mathcal B |  \omega_f(0) = |\mathcal B | \sum_{u\in \Lambda \setminus \{0\}} f(\| u\|^2 ) . 
\end{equation}
As above, we interpret the first moment as a regularized trace of the Laplacian.

\subsection{Minimal  spectral invariants  over lattices }
In this section, we address the question of minimizing spectral invariants over lattices for fixed $f$. We first consider the operator norm $\|W_f \|_{\ell^2(\Lambda) \to \ell^2(\Lambda)} $ and second moment $M_{W_f}^2$. 
\begin{thm} \label{thm:LatMatNorm}
If $f$ be a completely monotonic function satisfying \eqref{eq:fsum} for every unit-volume Bravais lattice, then the triangular lattice is the unique minimizer of $\|W_f \|_{\ell^2(\Lambda) \to \ell^2(\Lambda)} $ among all unit-volume Bravais lattices. If $f^2$ is a completely monotonic function satisfying 
$ \sum_{u \in \Lambda\setminus \{ 0 \}}  f^2(\| u\|^2) < \infty$ for every all unit-volume Bravais lattice, $\Lambda$,  then the triangular lattice is the unique minimizer of $M_{W_f}^2$ among all unit-volume Bravais lattices. 
\end{thm}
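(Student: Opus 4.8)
The plan is to reduce both assertions to a single statement about lattice energies of the form $E_g(\Lambda) := \sum_{u\in\Lambda\setminus\{0\}} g(\|u\|^2)$ with $g$ completely monotonic, and then to invoke Montgomery's theorem on minimal theta functions. For the operator norm, \eqref{eq:normAf} gives $\|W_f\|_{\ell^2(\Lambda)\to\ell^2(\Lambda)} = \omega_f(0) = \sum_{u\in\Lambda\setminus\{0\}} f(\|u\|^2) = E_f(\Lambda)$, so here $g=f$. For the second moment, \eqref{eq:2ndMomAf} gives $M_{W_f}^2 = |\mathcal B|\sum_{u\in\Lambda\setminus\{0\}} f^2(\|u\|^2)$, and since $|\mathcal B| = (2\pi)^2|\det B|^{-1} = (2\pi)^2$ is the same constant for every unit-volume Bravais lattice, minimizing $M_{W_f}^2$ is equivalent to minimizing $E_{f^2}(\Lambda)$; here $g=f^2$. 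Thus it suffices to prove: if $g$ is completely monotonic and not identically zero, with $\sum_{u\in\Lambda\setminus\{0\}} g(\|u\|^2)<\infty$ for every unit-volume Bravais lattice, then the unit-volume triangular lattice is the unique minimizer (up to rotation) of $E_g$.

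The key tool for this reduction is the Hausdorff--Bernstein--Widder theorem: a completely monotonic $g$ on $(0,\infty)$ admits the representation $g(r)=\int_{[0,\infty)} e^{-rt}\,d\mu(t)$ for a non-negative Borel measure $\mu$. The summability hypothesis forces $\mu(\{0\})=0$ (otherwise $g$ is bounded below by a positive constant and the lattice sum diverges), so $\mu$ is carried by $(0,\infty)$, and $\mu$ is not the zero measure because $g\not\equiv 0$. Writing $\theta_\Lambda(t):=\sum_{u\in\Lambda} e^{-t\|u\|^2}$ for the lattice theta function and applying Tonelli's theorem (all terms are non-negative), we obtain
\begin{equation*}
E_g(\Lambda) = \sum_{u\in\Lambda\setminus\{0\}} \int_{(0,\infty)} e^{-t\|u\|^2}\,d\mu(t) = \int_{(0,\infty)} \big(\theta_\Lambda(t)-1\big)\,d\mu(t),
\end{equation*}
and the left-hand side is finite by hypothesis.

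Now I invoke Montgomery's theorem \cite{Montgomery1988}: for each fixed $t>0$, the triangular lattice $\Lambda_\triangle$ is the unique minimizer of $\theta_\Lambda(t)$ among all unit-volume Bravais lattices in $\mathbb R^2$; equivalently, $\theta_\Lambda(t)\ge\theta_{\Lambda_\triangle}(t)$ for all $t>0$, with equality for some (hence all) $t$ only when $\Lambda$ is a rotation of $\Lambda_\triangle$. Integrating this pointwise inequality against $d\mu$ yields $E_g(\Lambda)\ge E_g(\Lambda_\triangle)$ for every unit-volume $\Lambda$. For uniqueness, if $\Lambda$ is not a rotation of $\Lambda_\triangle$ then $\theta_\Lambda(t)-\theta_{\Lambda_\triangle}(t)>0$ for every $t>0$, and since $\mu$ is a nonzero non-negative measure on $(0,\infty)$ the integral of this strictly positive function is strictly positive, giving $E_g(\Lambda)>E_g(\Lambda_\triangle)$. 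Applying this with $g=f$ and with $g=f^2$ (which is non-negative, hence $\not\equiv 0$ unless $f\equiv 0$, a trivial case) completes the proof.

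I expect there to be no genuine obstacle, only bookkeeping: confirming that $\mu$ has no atom at $0$ from the summability assumption, checking that $|\mathcal B|$ is the same constant across all unit-volume lattices so that the second-moment problem is scale-free, and verifying that the strict inequality in Montgomery's theorem survives integration against $\mu$ (which it does precisely because $\mu$ is nonzero on $(0,\infty)$). The analytic depth is entirely carried by Montgomery's minimal-theta-function result, which is quoted; everything else is the Bernstein integral representation together with Tonelli's theorem.
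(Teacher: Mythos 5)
Your proof is correct and follows essentially the same route as the paper: both reduce the operator norm to $\omega_f(0)=\sum_{u\neq 0}f(\|u\|^2)$ via \eqref{eq:normAf} and the second moment to $|\mathcal B|\sum_{u\neq 0}f^2(\|u\|^2)$ via \eqref{eq:2ndMomAf}, and then appeal to the minimality of the triangular lattice for completely monotone lattice energies. The only difference is that the paper cites this last step as \cite[Prop.~3.1]{Betermin2015} (with \cite{Montgomery1988} for the theta-function case), whereas you prove it directly via the Hausdorff--Bernstein--Widder representation, Tonelli, and Montgomery's theorem --- which is precisely the argument behind the cited proposition.
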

\begin{proof}
From  \eqref{eq:normAf}, we have then have that 
$$
\|W_f \|_{\ell^2(\Lambda) \to \ell^2(\Lambda)} = \omega_f(0) = \sum_{u \in \Lambda}  f(\| u\|^2),
$$ 
where $\omega_f$ is defined in \eqref{eq:dispRel}. From \eqref{eq:2ndMomAf}, we have that 
$$
M_{W_f}^2 = \sum_{u \in \Lambda}  f^2(\| u\|^2) .
$$
The result now follow from \cite[Prop. 3.1]{Betermin2015}.  In the special cases where $f(r)= e^{-\alpha r}$ with $\alpha >0$, the objective reduces to the theta function and the result  follows from the work  of H. L. Montgomery \cite[Theorem 1]{Montgomery1988}. In the special case with $f(r) = r^{-s}$  with $s > 1$, the objective reduces to the Epstein zeta function and the result follows from the work of   \cite{rankin1953,cassels1959,diananda1964,ennola1964}. 
\end{proof}

For large $p$, the $p$-th moment, as defined in \eqref{eq:pMomAf}, is dominated where $\omega_f(\xi)$ is large. The following result then follows from \eqref{eq:normAf} and Theorem \ref{thm:LatMatNorm}. 
 
\begin{cor} Let $f$ be a completely monotonic function satisfying \eqref{eq:fsum} for every unit-volume Bravais lattice. For $p$ sufficiently large, the triangular lattice is the unique minimizer of 
$M_{W_f}^p$ among all unit-volume Bravais lattices. 
\end{cor}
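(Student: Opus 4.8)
\emph{Proof plan.} The plan is to start from the representation $M_{W_f}^p=\int_{\mathcal B}\omega_f^p(\xi)\,d\xi$ in \eqref{eq:pMomAf} and exploit that the integrand peaks at $\xi=0$: $\omega_f$ is real, $|\omega_f(\xi)|\le\omega_f(0)$ with equality only at $\xi=0$ (strictness because a nonzero completely monotonic $f$ is strictly positive on $(0,\infty)$), and by \eqref{eq:normAf} the peak value $\omega_f(0)=\|W_f\|_{\ell^2(\Lambda)\to\ell^2(\Lambda)}$ is, by Theorem~\ref{thm:LatMatNorm}, the \emph{unique} minimum of $\|W_f\|$ over unit-volume Bravais lattices, attained at the triangular lattice $\Lambda_{\triangle}$. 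Since $(M_{W_f}^p)^{1/p}=\|\omega_f\|_{L^p(\mathcal B)}\to\|\omega_f\|_{L^\infty(\mathcal B)}=\omega_f(0)$ as $p\to\infty$, for large $p$ the comparison between lattices is dominated by $\omega_f(0)$, which suggests the claim; the real content is obtaining estimates that are \emph{uniform} over the non-compact moduli space $\mathcal M$ of unit-volume planar lattices (which is two-dimensional, has a single cusp, and has the triangular lattice as an order-$3$ orbifold point). I will use $|\mathcal B|=(2\pi)^2$ throughout.

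The first step is \emph{existence and localization of minimizers}. Expanding the $p$-th power and using $\int_{\mathcal B}e^{\imath\xi\cdot w}\,d\xi=|\mathcal B|\,\delta_{w,0}$ for $w\in\Lambda$ gives $M_{W_f}^p=|\mathcal B|\sum_{u_1+\dots+u_p=0}\prod_{i=1}^p f(\|u_i\|^2)$. Keeping only $p$-tuples supported on small multiples of a shortest lattice vector of length $\ell$ shows $M_{W_f}^p(\Lambda)\gtrsim_p\ell^{1-p}\to\infty$ as $\ell\to 0$ for any fixed $p\ge 2$; since $M_{W_f}^p$ is continuous on $\mathcal M$ and $M_{W_f}^p(\Lambda_{\triangle})\le|\mathcal B|\,\omega_{f,\triangle}(0)^p$, a minimizer exists and, for $p$ large, lies in a fixed compact set $\mathcal K\subset\mathcal M$.

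Next, split $\mathcal K=U\cup(\mathcal K\setminus U)$ with $U$ a fixed small neighborhood of $\Lambda_{\triangle}$. On $\mathcal K\setminus U$ one has $\omega_f(0)\ge\omega_{f,\triangle}(0)+\epsilon_0$ uniformly (continuity plus uniqueness in Theorem~\ref{thm:LatMatNorm}), and Laplace's method for $\int_{\mathcal B}\omega_f^p$ with constants uniform on this compact set gives $M_{W_f}^p(\Lambda)\gtrsim p^{-1}\omega_f(0)^{p+1}$ while $M_{W_f}^p(\Lambda_{\triangle})\lesssim p^{-1}\omega_{f,\triangle}(0)^{p+1}$, so the ratio grows like $(1+\epsilon_0/\omega_{f,\triangle}(0))^{p+1}\to\infty$ and $\Lambda_{\triangle}$ wins on $\mathcal K\setminus U$ once $p$ is large. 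On $U$ I would use that the stabilizer of $\Lambda_{\triangle}$ acts on $T_{\Lambda_{\triangle}}\mathcal M\cong\mathbb R^2$ by rotation through $2\pi/3$, which fixes no nonzero vector and no quadratic form other than a multiple of $|\cdot|^2$; hence every smooth $\mathcal M$-invariant function has vanishing gradient and \emph{scalar} Hessian at $\Lambda_{\triangle}$. Writing the uniform Laplace asymptotic $M_{W_f}^p=(2\pi/p)\,\omega_f(0)^{p+1}(\det Q_\Lambda)^{-1/2}(1+\eta_p)$ with $Q_\Lambda:=\sum_u(u\otimes u)f(\|u\|^2)$ and $\eta_p\to 0$, and noting that $\omega_f(0)$, $\det Q_\Lambda$, and $\eta_p$ are all $\mathcal M$-invariant, a Taylor expansion in a coordinate $t$ centered at $\Lambda_{\triangle}$ yields
\[
\ln M_{W_f}^p(\Lambda(t))-\ln M_{W_f}^p(\Lambda_{\triangle})=(p+1)\tfrac{a}{2}|t|^2+O(|t|^2)+(p+1)O(|t|^3)+O(|t|^2/p),
\]
where $a>0$ is the scalar Hessian of $\ln\omega_f(0)$ at $\Lambda_{\triangle}$ (its positivity being the non-degeneracy of the minimum in Theorem~\ref{thm:LatMatNorm}); the decisive $O(|t|^2/p)$ term — rather than $O(|t|/p)$ — uses that $\nabla\eta_p$ vanishes at $\Lambda_{\triangle}$. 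With $\mathrm{rad}(U)$ small and then $p$ large the right-hand side is positive for $0<|t|\le\mathrm{rad}(U)$, so $\Lambda_{\triangle}$ is the strict minimizer on $U$ as well. Combining the three ingredients produces a threshold beyond which $\Lambda_{\triangle}$ is the unique global minimizer.

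The step I expect to be the main obstacle is the quantitative input to the neighborhood-of-$\Lambda_{\triangle}$ analysis: one needs the Laplace asymptotic for $\int_{\mathcal B}\omega_f^p$ with remainder $\eta_p=O(1/p)$ controlled in $C^2$ uniformly for $\Lambda$ near $\Lambda_{\triangle}$ (this is where smoothness of the completely monotonic $f$ enters), together with the non-degeneracy $a>0$ of the minimizer of $\Lambda\mapsto\sum_u f(\|u\|^2)$ at $\Lambda_{\triangle}$, which should be available from — or extractable from the proofs of — the strict-minimality results \cite{Montgomery1988,rankin1953,cassels1959,diananda1964,ennola1964,Betermin2015} invoked in Theorem~\ref{thm:LatMatNorm}. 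Two minor caveats: if $f$ decays so slowly that $Q_\Lambda$ diverges, then $\omega_f(0)-\omega_f(\xi)$ vanishes at $\xi=0$ like $|\xi|^{2\gamma}$ with $0<\gamma<1$ and one replaces the Gaussian Laplace scaling by the corresponding homogeneous one, with $\det Q_\Lambda$ replaced by an invariant singularity coefficient, leaving the structure of the argument unchanged; and for odd $p$ the sign changes of $\omega_f$ are harmless, since $\int_{\{\omega_f<0\}}\omega_f^p$ is exponentially dominated by the peak at $\xi=0$.
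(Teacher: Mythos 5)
Your plan follows the same route as the paper: the paper's entire justification of this corollary is the one\textendash line observation preceding it, namely that $M_{W_f}^p=\int_{\mathcal B}\omega_f^p(\xi)\,d\xi$ is dominated for large $p$ by the peak value $\omega_f(0)=\|W_f\|_{\ell^2(\Lambda)\to\ell^2(\Lambda)}$, which Theorem~\ref{thm:LatMatNorm} shows is uniquely minimized by the triangular lattice. What you add, and what the paper leaves implicit, is the uniformity over the moduli space of unit-volume lattices: since $\|\omega_f\|_{L^p(\mathcal B)}\to\omega_f(0)$ only pointwise in $\Lambda$, a lattice-independent threshold for $p$ genuinely requires (i) confinement of near-minimizers to a compact part of moduli space, (ii) a Laplace-type lower bound with constants uniform away from the triangular lattice, and (iii) a local analysis near the triangular point where the gap in $\omega_f(0)$ degenerates; your three-step decomposition addresses exactly these, and the order-$3$ stabilizer argument for the vanishing gradient and scalar Hessian of invariant functions is correct. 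The one substantive input your argument needs that neither the corollary's hypotheses nor Theorem~\ref{thm:LatMatNorm} supplies is the non-degeneracy $a>0$ of the Hessian of $\Lambda\mapsto\sum_{u}f(\|u\|^2)$ at the triangular lattice (together with $C^2$-uniform control of the Laplace remainder); you flag this honestly, and it is the point that would need a precise citation (for the theta functions, and then for general completely monotonic $f$ via the Bernstein representation) in a full write-up. In short: same idea as the paper, but your outline is essentially the proof that the paper does not spell out.
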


\bigskip

We now consider the first moment of the Laplacian for fixed $f$. 
\begin{thm} \label{thm:LapMoments}
Let $f$ be a completely monotonic function satisfying \eqref{eq:fsum} for every unit-volume Bravais lattice. The triangular lattice is the unique minimizer of $M_{L_f}^1$ 
among all unit-volume Bravais lattices.
\end{thm}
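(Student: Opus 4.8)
The plan is to reduce $M^1_{L_f}$ directly to a quantity already handled in Theorem~\ref{thm:LatMatNorm}. The starting point is the computation \eqref{eq:1stMomLf}, which gives $M^1_{L_f} = |\mathcal B|\,\omega_f(0)$ with $\omega_f(0) = \sum_{u\in\Lambda\setminus\{0\}} f(\|u\|^2)$. First I would observe that for a unit-volume Bravais lattice in dimension $d=2$ one has $|\det B| = 1$, so $|\mathcal B| = (2\pi)^d |\det B|^{-1} = 4\pi^2$ is a constant that does not depend on which unit-volume lattice is chosen. Consequently, minimizing $M^1_{L_f}$ over all unit-volume Bravais lattices is the same optimization problem as minimizing $\omega_f(0) = \sum_{u\in\Lambda\setminus\{0\}} f(\|u\|^2)$ over the same family.

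Second, I would note that $\omega_f(0)$ is exactly the operator norm $\|W_f\|_{\ell^2(\Lambda)\to\ell^2(\Lambda)}$ by \eqref{eq:normAf}, which is precisely the first objective treated in Theorem~\ref{thm:LatMatNorm}. Since $f$ is assumed completely monotonic and satisfies the summability condition \eqref{eq:fsum} for every unit-volume lattice, Theorem~\ref{thm:LatMatNorm} applies verbatim and yields that the unique minimizer among unit-volume Bravais lattices is the triangular lattice. Equivalently, one may invoke \cite[Prop. 3.1]{Betermin2015} directly, or, in the special cases $f(r)=e^{-\alpha r}$ with $\alpha>0$ and $f(r)=r^{-s}$ with $s>1$, cite the theta-function result of Montgomery \cite{Montgomery1988} and the Epstein zeta results of \cite{rankin1953,cassels1959,diananda1964,ennola1964} respectively. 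Multiplying back by the constant $|\mathcal B| = 4\pi^2$ preserves the (strict) ordering, so the triangular lattice is the unique minimizer of $M^1_{L_f}$, proving the theorem.

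I do not expect a genuine obstacle here; the only care needed is in the bookkeeping of normalizations. One should confirm that ``unit volume'' is interpreted as $|\det B| = 1$ (so that $|\mathcal B|$ is genuinely lattice-independent), that the convention $f(0)=0$ adopted earlier lets one pass freely between sums over $\Lambda\setminus\{0\}$ and over $\Lambda$, and that the summability hypothesis \eqref{eq:fsum} is exactly what guarantees $\omega_f(0)<\infty$ so that the reduction is meaningful. Once these conventions are aligned, the statement is an immediate corollary of Theorem~\ref{thm:LatMatNorm}.
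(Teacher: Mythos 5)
Your argument is correct and matches the paper's own proof, which likewise starts from \eqref{eq:1stMomLf} and invokes \cite[Prop.~3.1]{Betermin2015} (the same result underlying Theorem~\ref{thm:LatMatNorm}); your extra bookkeeping about $|\mathcal B|$ being lattice-independent for unit-volume lattices is a correct and worthwhile detail that the paper leaves implicit.
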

\begin{proof}
The proof follows from \eqref{eq:1stMomLf} and  \cite[Prop. 3.1]{Betermin2015}. 
\end{proof}

Thus, the triangular lattice has the smallest regularized trace of $L_f$. However, it is not obvious what the minimizer is for other spectral invariants, such as 
$$
\|L_f \|_{\ell^2(\Lambda) \to \ell^2(\Lambda)} = \omega_f(0) - \min_{\xi \in \mathcal B} \ \omega_f(\xi).
$$
To consider such spectral invariants, in the following section, we describe sequences of finite graphs with graph operators that converge (in the sense of spectral measure) to $W_f$ and $L_f$.

\subsection{Finite approximations to lattice configurations} \label{sec:ToriGraph}
In this section, we assume $f$ has compact support. We make this assumption because the graph associated with any pointset configuration with pairwise distances bounded below will be locally finite, {\it i.e.} the degree of every vertex is finite.
Let  $\Lambda = B(\mathbb Z^2)$ be a Bravais lattice. To approximate $\Lambda$, for any even integer $N$, we consider the $N^2$-vertex torus graph, 
\begin{equation} \label{eq:FiniteToriGraph}
\Lambda^N := \left\{ B x \colon x = (i / N - N/2), j/N - N/2) \text{ for } i,j \in \{0,1,\ldots, N-1 \}  \right\} .
\end{equation}
Note that this is equivalent to truncating a lattice and identifying the ``boundaries''. 
Define the following operator $W_f^N \colon \ell^2(\Lambda^N) \to \ell^2(\Lambda^N)$ by 
$$
W_f^N \psi(v) = \sum_{u \in \Lambda^N \setminus \{0\}} f\left( d^2(u,v) \right) \psi(u), \qquad v \in \Lambda^N. 
$$
Here $d$ should be interpreted as the distance after identification of the boundaries. 

We now study the limit  $N\to \infty$. To compare the operator $W_f^N \colon \ell^2(\Lambda^N) \to \ell^2(\Lambda^N)$ to the operator $W_f \colon \ell^2(\Lambda) \to \ell^2(\Lambda)$, we extend $W_f^N$ to $\ell^2(\Lambda)$, by 
$$
W_f^N \psi (v) = 
\begin{cases}
W_f^N \psi(v) & v\in \Lambda^N \\
0 & v \notin \Lambda^N.
\end{cases}
$$
Similarly, we define the operator $L_f^N \colon \ell^2(\Lambda^N) \to \ell^2(\Lambda^N)$ by 
$$
L_f^N \psi(v) = \left[ \sum_{u \in \Lambda^N \setminus\{0 \}} f\left( d^2(u,0) \right) \right] \psi(v)  - W^N_f  \psi(v), \qquad v \in \Lambda^N
$$
and extend $L_f^N$ to $\ell^2(\Lambda)$, by
$$
L_f^N \psi (v) = 
\begin{cases}
L_f^N \psi(v) & v\in \Lambda^N \\
0 & v \notin \Lambda^N.
\end{cases}
$$

Let us recall some statements from \cite{MoharWoess1989}, encapsulated there as Theorems $4.12$ and$4.13$.  See also the treatment of \cite{godsil1988walk}, Theorems $4.1$ and $4.2$, which focus specifically on spectral measures for infinite graphs.  In these works, as a means of proving information about the spectral measure of an infinite graph of bounded degree, convergence to this measure from a family of finite graph operators is discussed.  The statements are essentially that given a finite family of sub-graphs converging to an infinite graph with bounded degree, then the spectral measures at each lattice point converge to the spectral measure at a lattice point for the infinite graph, and the spectral radii of the finite graphs converge to that of the infinite.  To prove this theorem, Lemma $4.11$ in  \cite{MoharWoess1989} is applied, which is a classical theorem from functional analysis stating that given a sequence of self-adjoint operators acting on $\ell^2$ that converge weakly (namely pointwise for each element of $\ell^2$), then the resolutions of identity $\mu_n ( (-\infty, \lambda)$ converge to $\mu ( (-\infty, \lambda)$, the spectral measure of the limit for all $\lambda$ at which $\mu$ is continuous.  

\begin{prop} Assume  $f\colon \mathbb R\to \mathbb R$ is a non-negative, compactly supported function satisfying \eqref{eq:fsum}. 
In the strong operator topology, $W_f ^N \to W_f$ and $L_f^N \to L_f$ as $N\to \infty$ . 
\end{prop}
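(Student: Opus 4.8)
The plan is to show strong operator convergence $W_f^N \to W_f$ directly from the definitions, and then deduce $L_f^N \to L_f$ as an easy consequence; the compact support of $f$ is what makes this essentially a bookkeeping argument rather than an analytic one. First I would fix $\psi \in \ell^2(\Lambda)$ and $\varepsilon > 0$, and split $\psi = \psi_R + (\psi - \psi_R)$ where $\psi_R$ is the restriction of $\psi$ to $\Lambda \cap B(0,R)$ for $R$ large enough that $\|\psi - \psi_R\|_{\ell^2(\Lambda)} < \varepsilon$. Since $f$ is compactly supported, say $\operatorname{supp} f(\|\cdot\|^2) \subset B(0,\rho)$, the operators $W_f$ and $W_f^N$ are \emph{uniformly bounded} in $N$ (their operator norms are bounded by $\sum_{u \in \Lambda \setminus \{0\}} f(\|u\|^2) = \omega_f(0) < \infty$, using \eqref{eq:normAf} for $W_f$ and the analogous finite computation for $W_f^N$). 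Uniform boundedness means it suffices to prove convergence on the dense subset of finitely-supported vectors, so I would reduce to showing $W_f^N \psi \to W_f \psi$ in $\ell^2(\Lambda)$ for $\psi$ supported in a fixed ball.

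For such a finitely-supported $\psi$, the key geometric observation is that both $(W_f \psi)(v)$ and $(W_f^N \psi)(v)$ are supported (in $v$) within distance $\rho$ of $\operatorname{supp}\psi$, hence within a fixed bounded region independent of $N$. For any fixed $v$ in this bounded region, once $N$ is large enough the truncated torus $\Lambda^N$ contains all lattice points within distance $\rho + \operatorname{diam}(\operatorname{supp}\psi)$ of $v$, and — crucially — for such $N$ the torus distance $d(u,v)$ agrees with the Euclidean distance $\|u - v\|$ for all $u \in \operatorname{supp}\psi$ (a point wraps around only if it is within $\rho$ of the boundary of the fundamental domain, which does not happen for $N$ large). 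Therefore $(W_f^N \psi)(v) = (W_f \psi)(v)$ for all $v$, once $N$ exceeds a threshold depending only on $R$, $\rho$, and the lattice $\Lambda$. This gives $W_f^N \psi = W_f \psi$ exactly for $N$ large, which is far stronger than convergence. Combining with the uniform-boundedness reduction from the first paragraph yields $W_f^N \to W_f$ strongly.

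For the Laplacian, write $L_f^N = D_f^N - W_f^N$ where $D_f^N$ is multiplication by the scalar $\sum_{u \in \Lambda^N \setminus \{0\}} f(d^2(u,0))$ on $\ell^2(\Lambda^N)$ (and zero off $\Lambda^N$), while $L_f = D_f - W_f$ with $D_f$ multiplication by $\omega_f(0) = \sum_{u \in \Lambda \setminus \{0\}} f(\|u\|^2)$. By the same torus-versus-Euclidean-distance argument, for $N$ large the sum defining $D_f^N$ equals $\omega_f(0)$, so $D_f^N$ is just $\omega_f(0)$ times the projection onto $\ell^2(\Lambda^N)$; since these projections converge strongly to the identity on $\ell^2(\Lambda)$ as $N \to \infty$, we get $D_f^N \to D_f$ strongly, and hence $L_f^N \to L_f$ strongly by taking differences. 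I expect the only real subtlety is the careful statement of the threshold on $N$ ensuring torus and Euclidean distances coincide on the relevant finite set of lattice point pairs — this is where compact support of $f$ and the choice of truncation in \eqref{eq:FiniteToriGraph} are used, and it should be spelled out with a single estimate on $N$ in terms of $\rho$ and the shortest vector of $\Lambda$. Everything else is the standard $\varepsilon/3$ density argument for strong convergence of uniformly bounded operators.
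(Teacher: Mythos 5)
Your proposal is correct and follows essentially the same route as the paper: show $W_f^N\psi = W_f\psi$ exactly for compactly supported $\psi$ and large $N$ (using that $f$ has compact support and that torus and Euclidean distances eventually agree on the relevant pairs), then pass to all of $\ell^2(\Lambda)$ by density. You are more careful than the paper in two places — you explicitly invoke uniform boundedness of the $W_f^N$ to justify the density argument, and you treat $L_f^N \to L_f$ via the decomposition $D_f^N - W_f^N$ with $D_f^N$ converging as a scalar multiple of a projection, where the paper only remarks that the operators are ``related by an inversion and finite shift'' — but these are refinements of the same argument, not a different one.
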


\begin{proof}

 Let $\psi \in \ell^2(\Lambda)$ be compactly supported. Since $f$ has compact support, for sufficiently large $N$, 
 $W_f^N \psi = W_f \psi$. By the density of compactly supported functions in $\ell^2(\Lambda)$, $W_f ^N \to W_f$. 
Since $f$ has compact support, the operators  $W^N_f$ and $L^N_f$  (and $W_f$ and $L_f$) are simply related by an inversion and finite shift, so $L_f^N \to L_f$ as well. 
 \end{proof}

Given a family of operators that thus converge in this sense, 
let $\mu^N$ and $\mu$ denote the resolutions of the identity for $W_f^N$ and $W_f$. By \cite[Lemma 4.11]{MoharWoess1989}, we have that $\mu_N((-\infty,\lambda))$ converges to $\mu((-\infty,\lambda))$ for every $\lambda$. In particular, we have for $\epsilon >0$,  
\begin{equation*} \label{eq:ApproxSpectralMeasure} 
\lim_{N\to \infty} \ 
 \frac{ 1 }{  N^2  } \cdot 
 \left( \text{number of eigenvalues of  $W_f^N$ in $[\lambda,\lambda + \epsilon]$} \right) 
\ = \
 \frac{1}{|\mathcal B|} \int_{\lambda}^{\lambda+\epsilon} h_f(\lambda) d\lambda. 
\end{equation*}
Thus, we will perform computational experiments to understand how some spectral invariants depend on the lattice by approximating lattices $\Lambda$ by finite tori graphs $\Lambda^N$, which is well-motivated  in the large lattice limit by the above analysis.  In addition, we note that while the analysis in this section applies to $f$ with compact support, we expect that the methods carry through for $f$ with sufficient decay.

\section{Spectral optimal pointset configurations on tori}  \label{sec:tori}
In Section \ref{sec:lat}, we considered lattice configurations and in Theorems \ref{thm:LatMatNorm} and \ref{thm:LapMoments} we proved that among lattices, the triangular lattice extremizes certain spectral invariants. In this section, we conduct a variety of computational experiments to extend  these results in two ways:
(i)  We consider a variety of spectral invariants that are more difficult to address analytically. 
(ii) Instead of just considering lattice configurations, in this section, we also consider general pointset configurations on flat tori.
To address these questions, we will present two types of numerical results. 
\begin{enumerate}
\item The first will be based on the results in Section \ref{sec:ToriGraph}. We consider the adjacency matrices  for  finite torus graphs defined in \eqref{eq:FiniteToriGraph}, which are periodic truncations of Bravais lattices. We parameterize the Bravais lattices as described in Appendix~\ref{sec:LattParam}  and plot the value of a spectral invariant over the set $U$ in Proposition~\ref{prop:LatticeParam} (see Figure~\ref{fig:FundDom}). 
\item We consider general pointset configurations on tori and use gradient-based optimization methods to find locally optimal pointset configurations for a spectral invariant.  
\end{enumerate}
The computational results of this section will support conjectures that the triangular lattice extremizes a variety of spectral invariants among general pointset configurations. 

This section is organized as follows. We begin with the trace of the graph Laplacian and squared Frobenius norm of the graph Adjacency matrix, since these spectral quantities reduce to the pairwise potential energy objective in \eqref{eq:genPairEnergy}. We then consider the spectral radius, total effective resistance, algebraic connectivity, and condition number, as defined in Section~\ref{sec:bg}.

\subsection{Spectral invariants reducing to the generalized Thompson's problem} \label{sec:Thompson}
We have already noted that a number of spectral invariants  give the generalized Thompson's problem, 
\begin{equation}
\label{eq:traceComp}
g\left(\{ \x_i \}_{i\in[n]} \right) = \sum_{\substack{i,j=1 \\ i\neq j}}^n f\left( d^2(\x_i, \x_j) \right) ;
\end{equation}
see \eqref{eq:genPairEnergy}. 
These include
(i) the second moment of the adjacency matrix spectrum, 
$$
\sum_k \mu_k^2(W^\x) = \mathrm{tr} (W^\x W^\x) = \| W^\x \|^2_F = \sum_{i,j} (W^\x_{i,j})^2 
$$
and 
(ii) the trace of the graph Laplacian, $\mathrm{tr}(L^\x) = \sum_k \lambda_k(L^\x)$, as defined in \eqref{eq:trL}.  
Note that these two quantities  are equivalent for different choices of function $f$. 
From Theorems~\ref{thm:LatMatNorm} and  \ref{thm:LapMoments}, we anticipate that if 
$f(r)$ be given by either $f(r)= e^{-\alpha r}$ with $\alpha >0$ or $f(r) = r^{-s}$  with $s > 1$, 
the triangular lattice configuration will be optimal, at least among lattice configurations. 

\medskip

We consider Bravais lattice configurations of points, $\Lambda = B(\mathbb Z^2)$ and the finite approximation $\Lambda^N$ as defined in \eqref{eq:FiniteToriGraph}. Let $N=10$ and $f\colon\mathbb R\to\mathbb R$ be the convex and decreasing function $f(r) = \exp(- \alpha r)$ with $\alpha = 2$.  In Figure~\ref{fig:traceFundDom}(left), we plot the trace of the graph Laplacian for the pointset configuration $\{\x_i\}_{i\in[N^2]} = \Lambda^N =  \Lambda^N_{a,b}$ as lattice parameters $(a,b)$ vary over the set $U$ in Proposition~\ref{prop:LatticeParam}.  We observe that the triangular lattice is a global minimum and the square lattice is a saddle point. 

\begin{figure}[t]
\begin{center}
 \includegraphics[height=.3\textwidth]{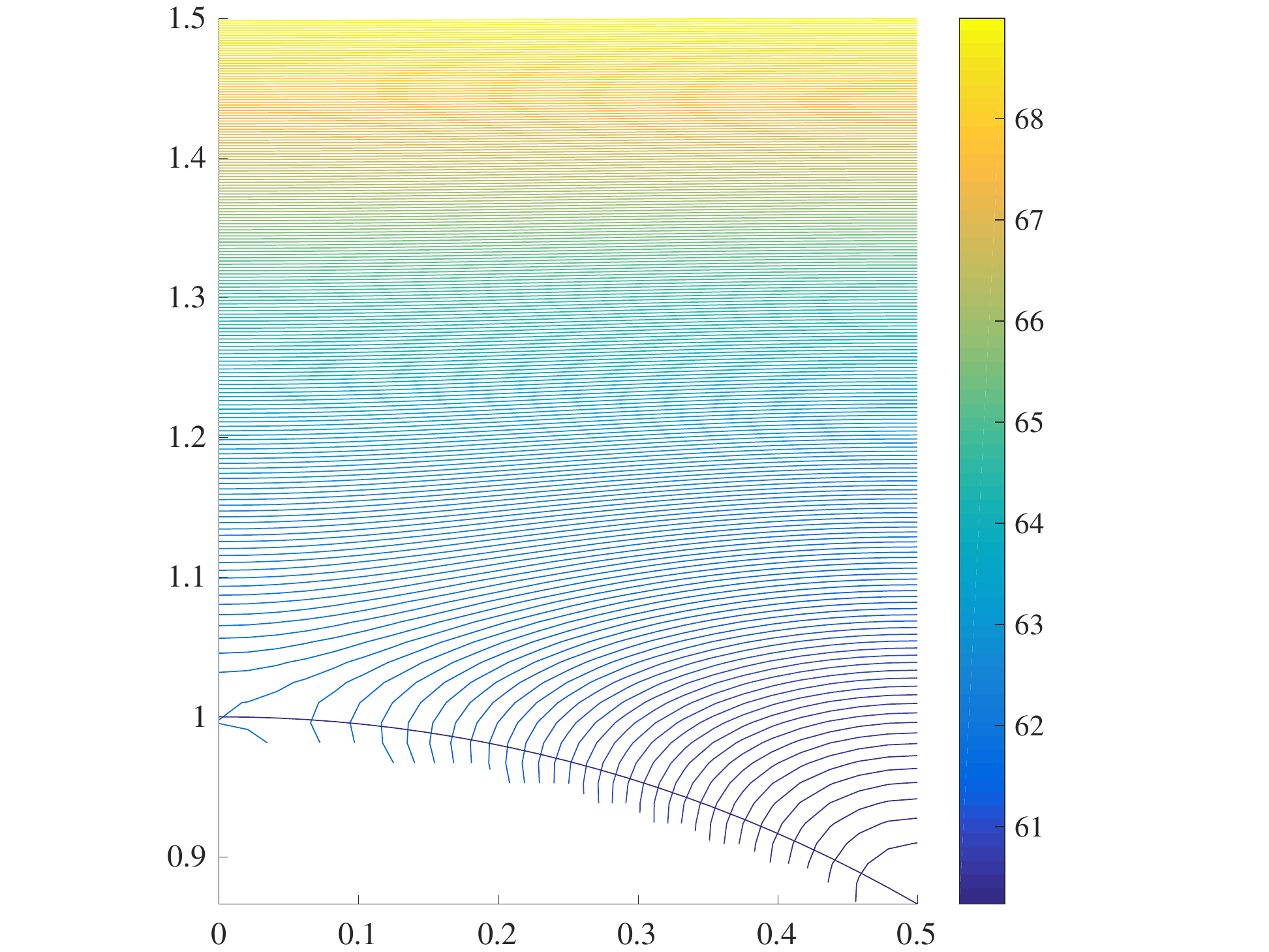}
 \includegraphics[height=.3\textwidth]{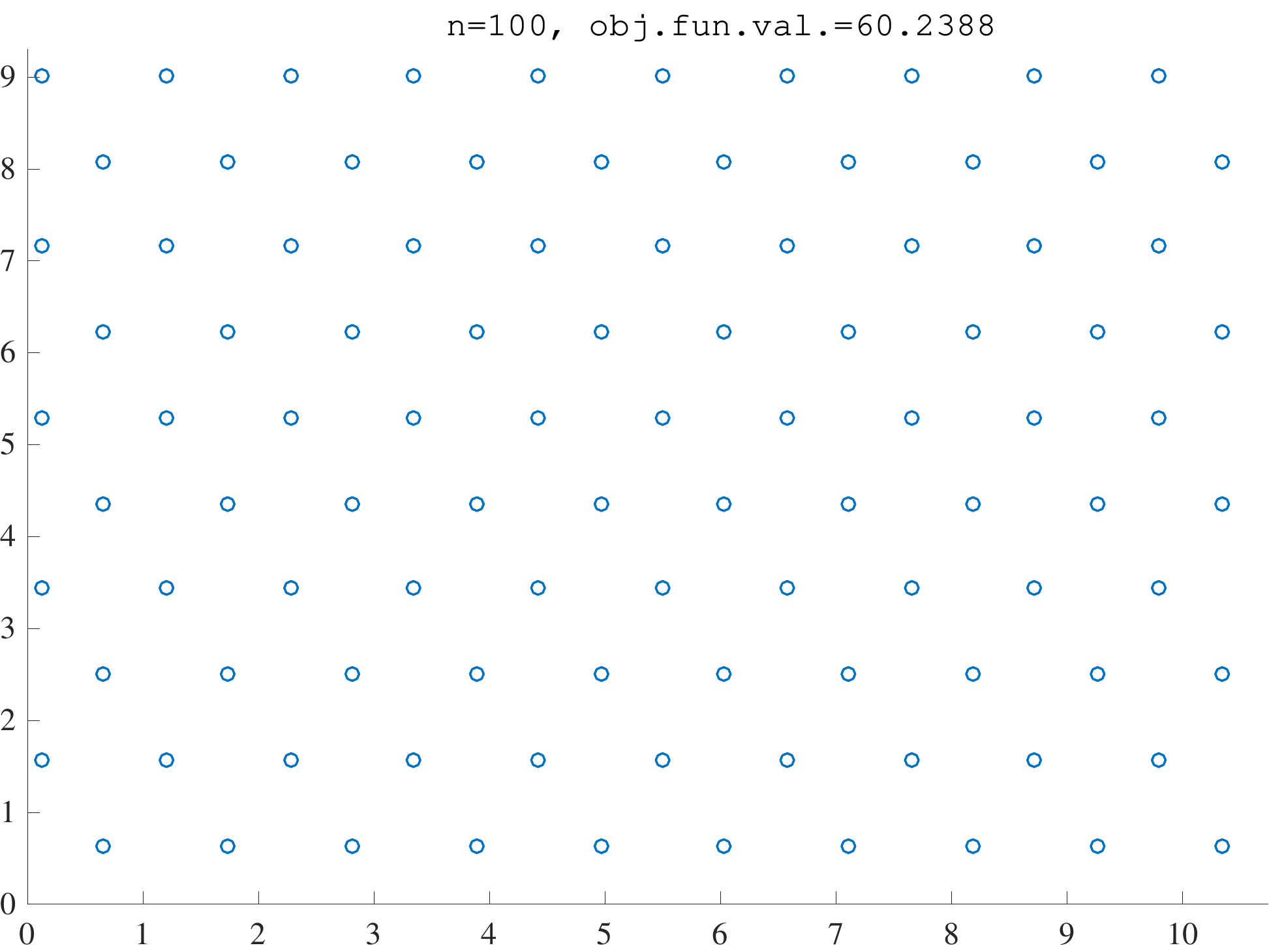}
\caption{ {\bf (left)}  A contour plot of the objective \eqref{eq:traceComp} with  $f(r) = e^{-2 r}$  for configurations $\{ \x_i\}_{i \in [N^2]} =  \Lambda^{10}_{a,b}$ given in \eqref{eq:FiniteToriGraph} as the lattice parameters $(a,b)$ vary over the set $U$ in Proposition~\ref{prop:LatticeParam}. A sector of the unit circle is drawn for reference.
{\bf (right)} The best general pointset configuration obtained for this objective on a flat torus. See Section~\ref{sec:Thompson}. 
\label{fig:traceFundDom}}
\end{center}
\end{figure}

We also consider a general configuration of $n=10^2$ points distributed on a $W\times H$ rectangle chosen such that $W\cdot H = n$ and $H = W\cdot \sqrt{3}/2$  with periodic boundary conditions (a flat torus). We use a BFGS quasi-Newton method with the gradient computed using Proposition~\ref{prop:objFunDiffL} to find a locally optimal configuration. We initialize using a random selection of points chosen independently and uniformly from the rectangle. To avoid local minima, this experiment is repeated several times and  the pointset configuration with smallest objective is plotted in Figure~\ref{fig:traceFundDom}(right). We observe that this pointset configuration closely approximates a triangular lattice. We remark that for the finite pointset case, the triangular lattice configuration is not optimal if the potential is too long-range, even if it is completely monotone, {\it e.g.}, for the above parameters with $f(r) = \exp(- \alpha r)$ with $\alpha = 0.2$ the triangular lattice is not optimal, even among lattices. 

In what follows, we find that several optimal configurations are the triangular lattice. Rather than plotting a figure that appears identical to Figure \ref{fig:traceFundDom}(right) each time, we  instead report the (shifted) value of the closest nearest neighbors in the configuration, 
$$
d_{\text{min}}^\x := \frac{W}{\sqrt{n}} - \min_{i\in [n]} \ \min_{j\neq i} \ d(x_i, x_j) .   
$$
This value is non-negative and a value that is (nearly) zero implies that the configuration is triangular. For the configuration shown in  Figure \ref{fig:traceFundDom}(right), $d_{\text{min}}^\x =   1.34 \times 10^{-5}$. This small discrepancy could be further reduced by, {\it e.g.} reducing the convergence criterion for the optimization method.

\begin{figure}[t]
\begin{center}
\includegraphics[height=.24\textwidth]{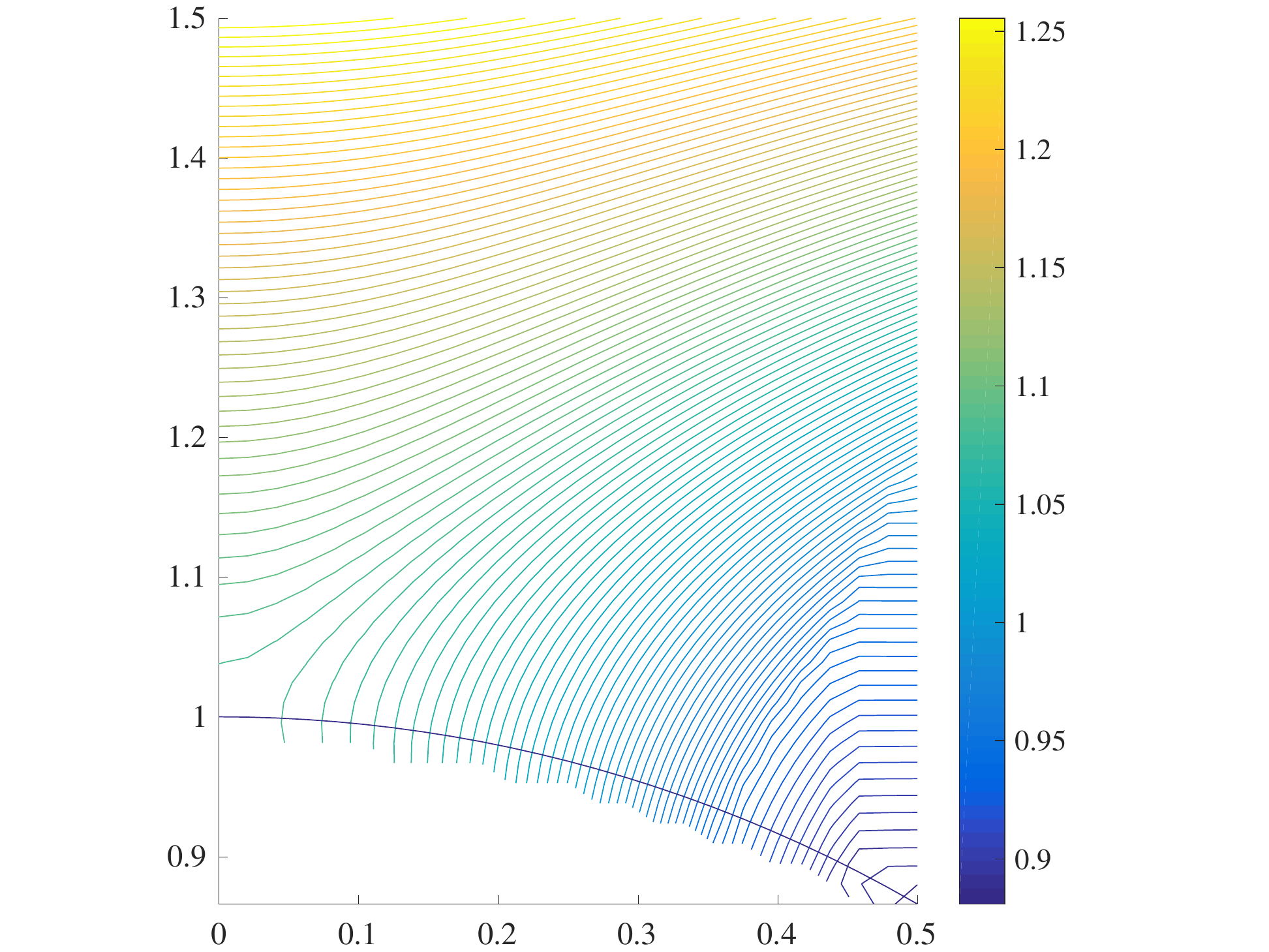}
\includegraphics[height=.24\textwidth]{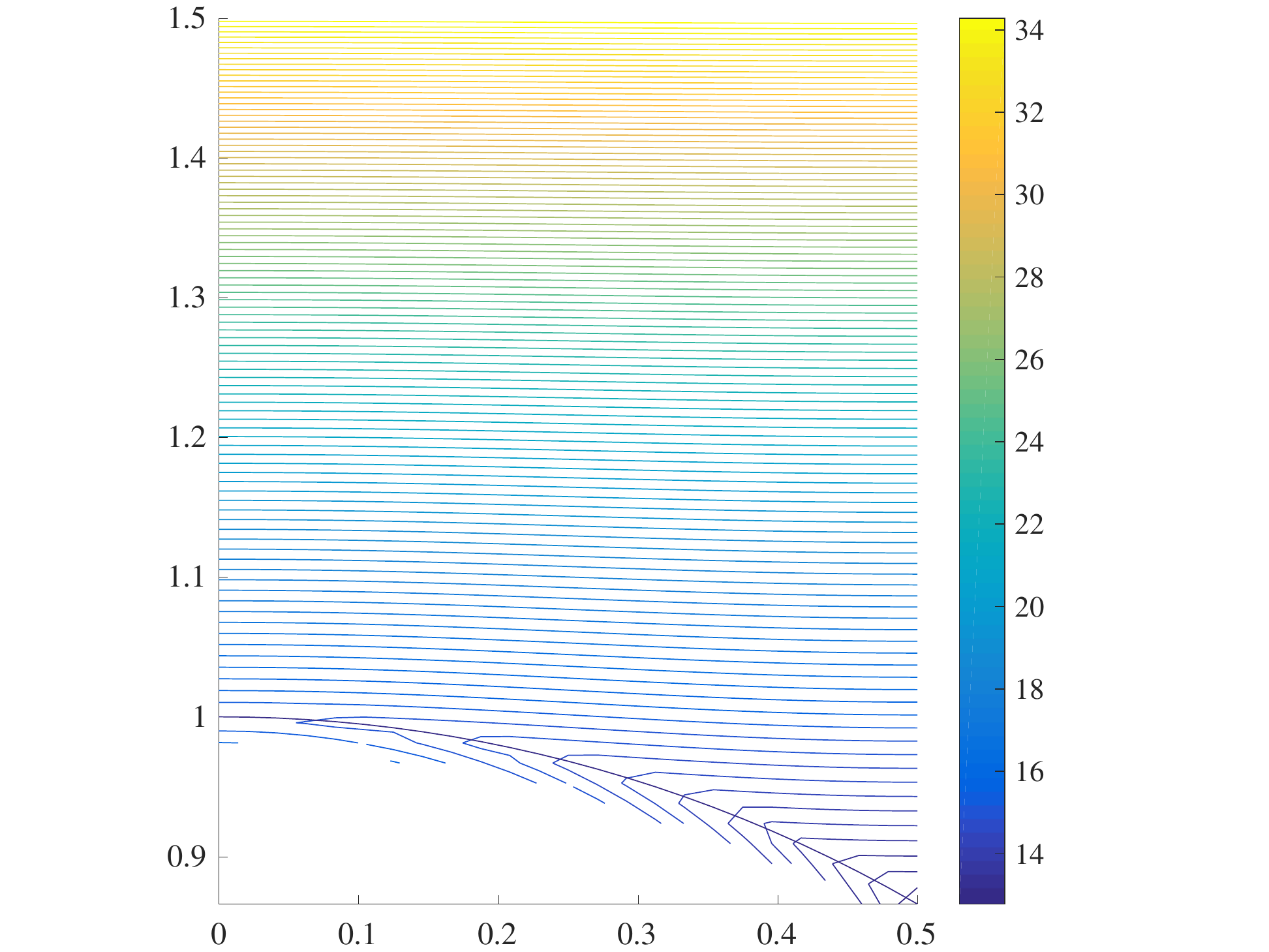}
\includegraphics[height=.24\textwidth]{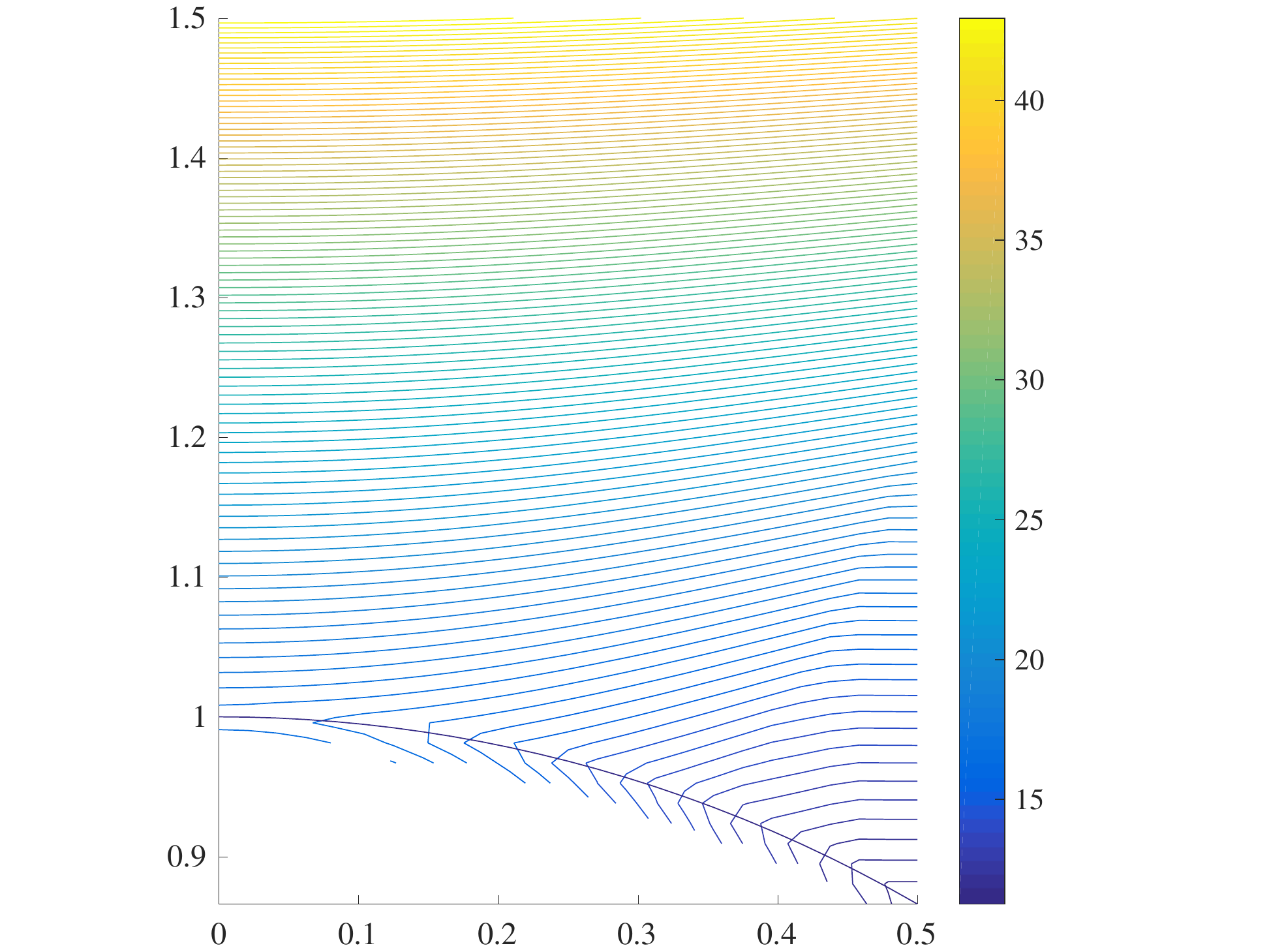}
\caption{Plots of the spectral radius $\lambda^\x_n$ (left), 
inverse algebraic connectivity $1/\lambda^\x_2$ (center), and 
condition number $ \lambda^\x_n/ \lambda^\x_2$ (right)
with  $f(r) = e^{- 2r} $ for configurations $\{ \x_i\}_{i \in [N^2]} = \Lambda^N_{a,b}$ with $(a,b) \in U$.  
See Sections~\ref{sec:SpecRad}, \ref{sec:AlgConn}, and \ref{sec:CondNum}. }
\label{fig:FundDoms}
\end{center}
\end{figure}

\subsection{Spectral radius} \label{sec:SpecRad}
We consider the spectral radius of the graph Laplacian, $\lambda_{\max}(L^\x)$,  as defined in \eqref{eq:LSpecRad}. 
We choose parameters and $f$ as in Section~\ref{sec:Thompson}.  In Figure~\ref{fig:FundDoms}(left), we plot $\lambda_{\max}(L^\x)$ for the configuration $\{ \x_i\}_{i \in [N^2]} = \Lambda^{a,b}_N$ as the lattice parameters $(a,b)$ vary over the set $U$ in Proposition~\ref{prop:LatticeParam}. 
For general pointset configurations, we obtain a configuration that is very close to a triangular lattice, with $d_{\text{min}}^\x = 1.8\times 10^{-4}$. We remark that for this objective, there are many more local minima  where the pointset becomes ``geometrically frustrated'' and is unable to converge to a triangular configuration.

\subsection{Algebraic connectivity} \label{sec:AlgConn}
We first consider the inverse algebraic connectivity of the graph, $1/\lambda_2 \left( L^\x \right)$, as defined in \eqref{eq:lam2}.
We choose parameters and $f$ as in Section~\ref{sec:Thompson}.  
In Figure~\ref{fig:FundDoms}(center), we plot $1/\lambda_2 \left( L^\x \right)$  for the configuration $\{ \x_i\}_{i \in [N^2]} = \Lambda^{a,b}_N$ as the lattice parameters $(a,b)$ vary over the set $U$ in Proposition~\ref{prop:LatticeParam}. For equal-volume  lattice configurations, the triangular lattice is minimal, but for  general pointset configurations, the minimal  configuration obtained is when the points coalesce  to a single point.

\subsection{Condition number} \label{sec:CondNum}
We consider the condition number $\kappa(L^\x) = \lambda^\x_n/ \lambda^\x_2$, as defined in \eqref{eq:CondNum}.  
We choose parameters and $f$ as in Section~\ref{sec:Thompson}.  
In Figure~\ref{fig:FundDoms}(right), we plot $\kappa(L^\x)$  for the configuration $\{ \x_i\}_{i \in [N^2]} = \Lambda^{a,b}_N$ as the lattice parameters $(a,b)$ vary over the set $U$ in Proposition~\ref{prop:LatticeParam}. 
Even though the condition number of a graph is a quasi-convex function of the graph weights, the triangular lattice is minimal among lattices of the same volume. 
For general pointset configurations, we obtain a configuration that is very close to a triangular lattice, with $d_{\text{min}}^\x = 9.16\times10^{-6}$.

\subsection{Total effective resistance} \label{sec:EffRes}
We consider the total effective resistance  of the graph, $R_{\mathrm{tot}}^\x$, as defined in \eqref{eq:EffRes}. 
We choose parameters as in Section~\ref{sec:Thompson}, except we choose the concave and increasing function $f(r) = 1 - \exp(-\alpha r)$ with $\alpha = 2$.  In Figure~\ref{fig:effResFundDom}(left), we plot $R_{\mathrm{tot}}$ for the configuration $\{ \x_i\}_{i \in [N^2]} = \Lambda^{a,b}_N$ as the lattice parameters $(a,b)$ vary over the set $U$ in Proposition~\ref{prop:LatticeParam}. 
For general pointset configurations, we obtain an optimal configuration that is very close to a triangular lattice, with $d_{\text{min}}^\x = 9.9\times 10^{-6}$. 

If we choose the convex and decreasing function $f(r) = \exp(- 2 r)$ instead, 
one observes from Figure~\ref{fig:effResFundDom}(right)  that the triangular lattice is minimal among lattices. 
However, for general pointset configurations, the optimal configuration is when  all points are at the same location. 
For this objective, the intermediate configurations along the optimization path are very interesting. 
Iterations  1, 10, 150, 200, 250, and 400 for a initial condition are plotted in Figure~\ref{fig:effResFundTime}. 
We observe that the points rapidly aggregate along one dimensional curves which becomes hexagonal before coalescing  to a single  point. 
For other random initial conditions, the one dimensional curves formed at intermediate iterations are sometimes geodesics of the torus.

\begin{figure}[t]
\begin{center}
 \includegraphics[height=.23\textwidth]{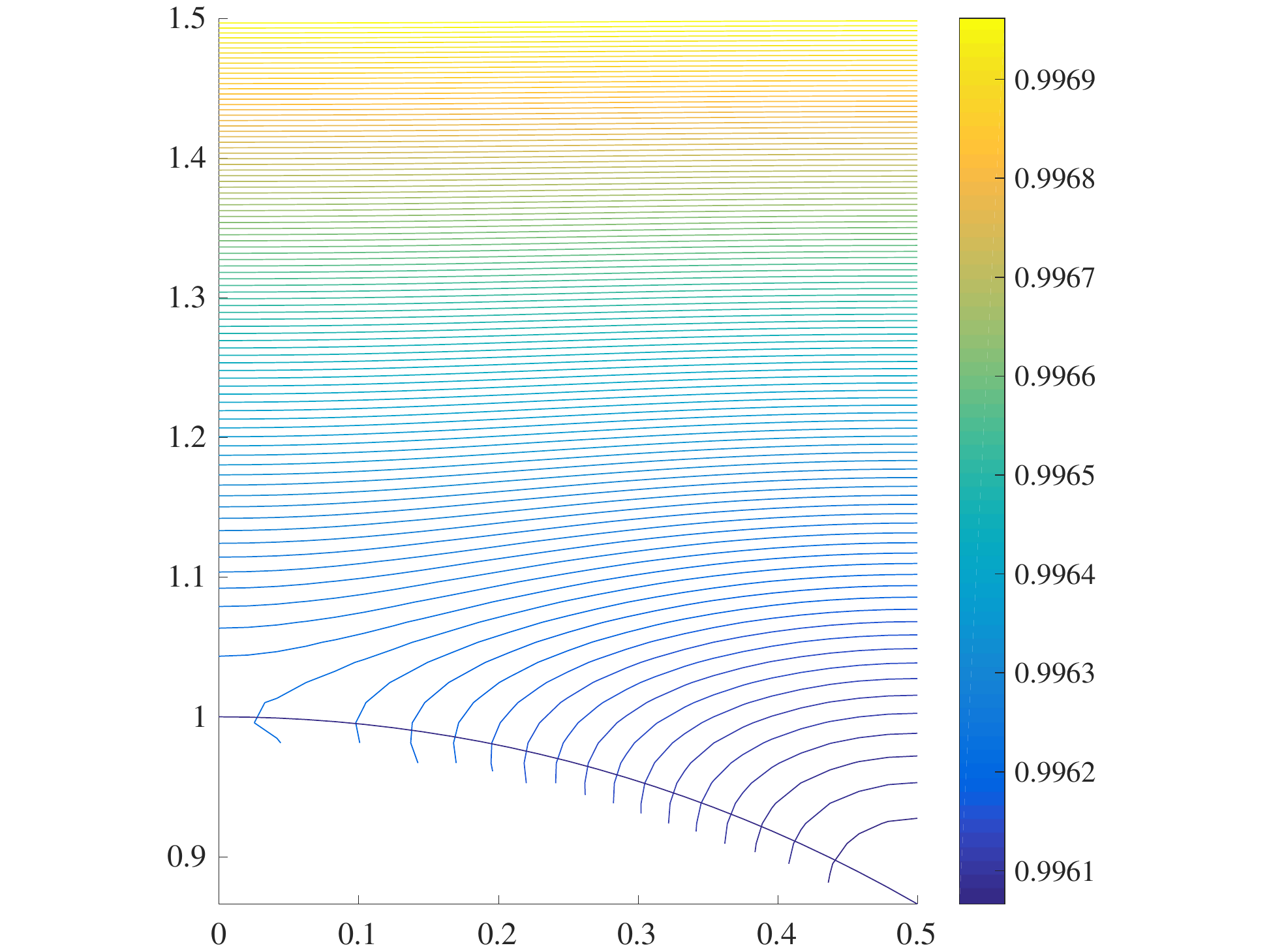}
 \includegraphics[height=.23\textwidth]{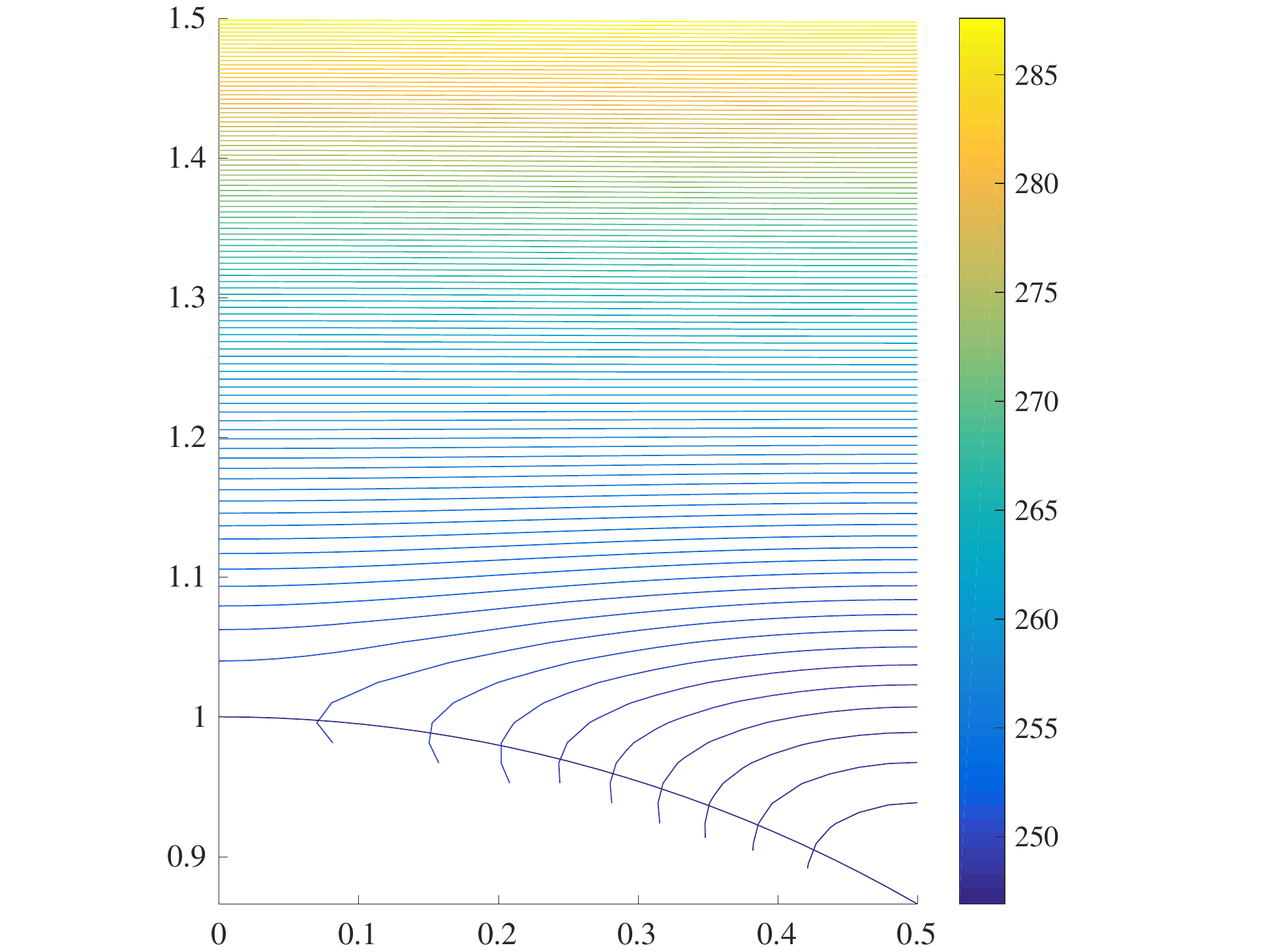}
 \includegraphics[height=.23\textwidth]{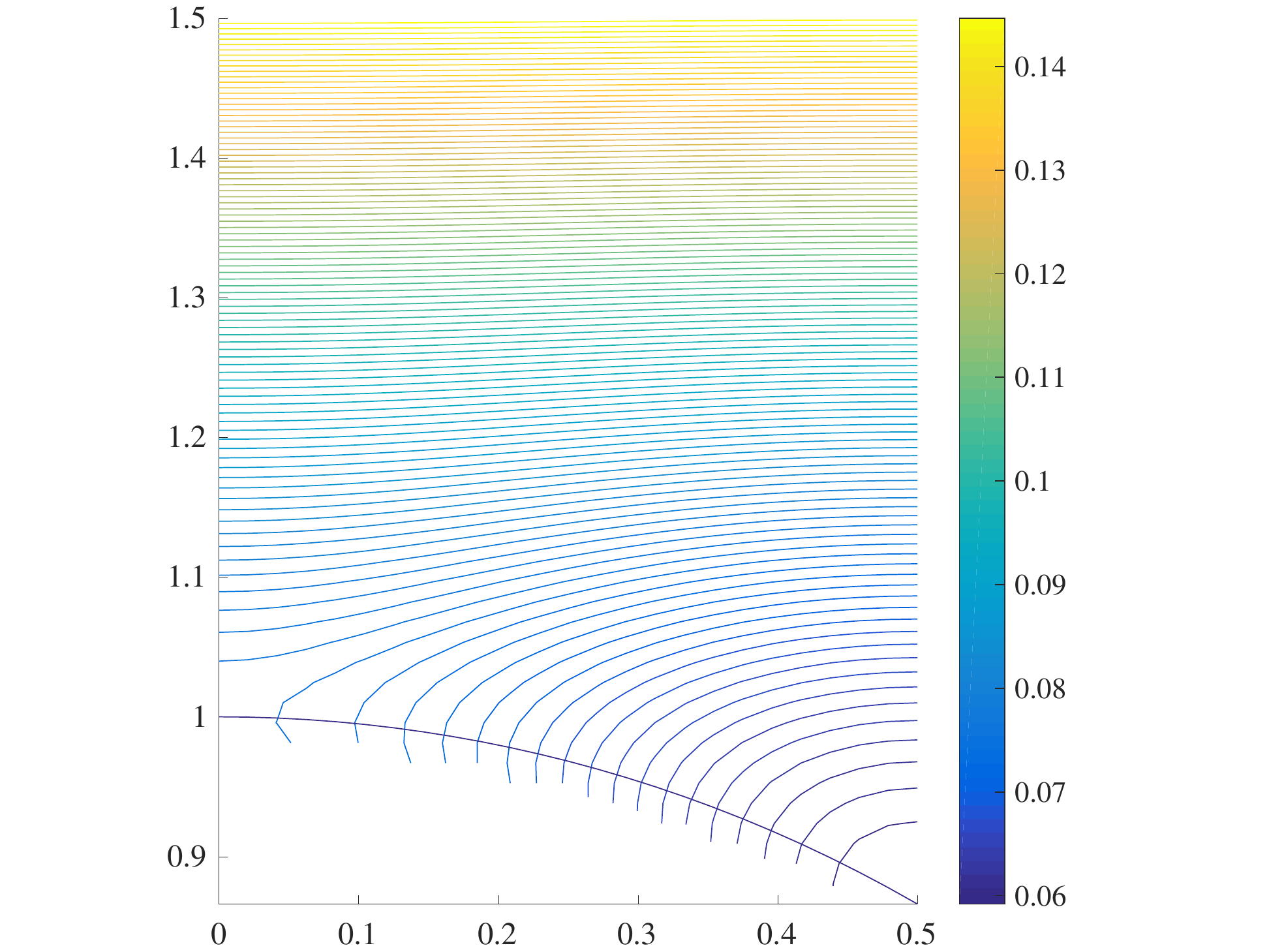}
\caption{ 
{\bf (left)} Plot of the effective resistance with $f(r) = 1 - e^{-2r} $for configurations $\{ \x_i\} = \Lambda_{a,b}^N$ as the lattice parameters $(a,b)$ vary over the set $U$ in Proposition~\ref{prop:LatticeParam}.  
{\bf (center)} The same except $f(r) = e^{- 2r} $. 
{\bf (right)} Same plot, but for the variance of the graph Laplacian eigenvalues. 
See Sections~\ref{sec:EffRes} and \ref{sec:Var}. }
\label{fig:effResFundDom}
\end{center}
\end{figure}

\begin{figure}[t]
\begin{center}
\includegraphics[height=.23\textwidth]{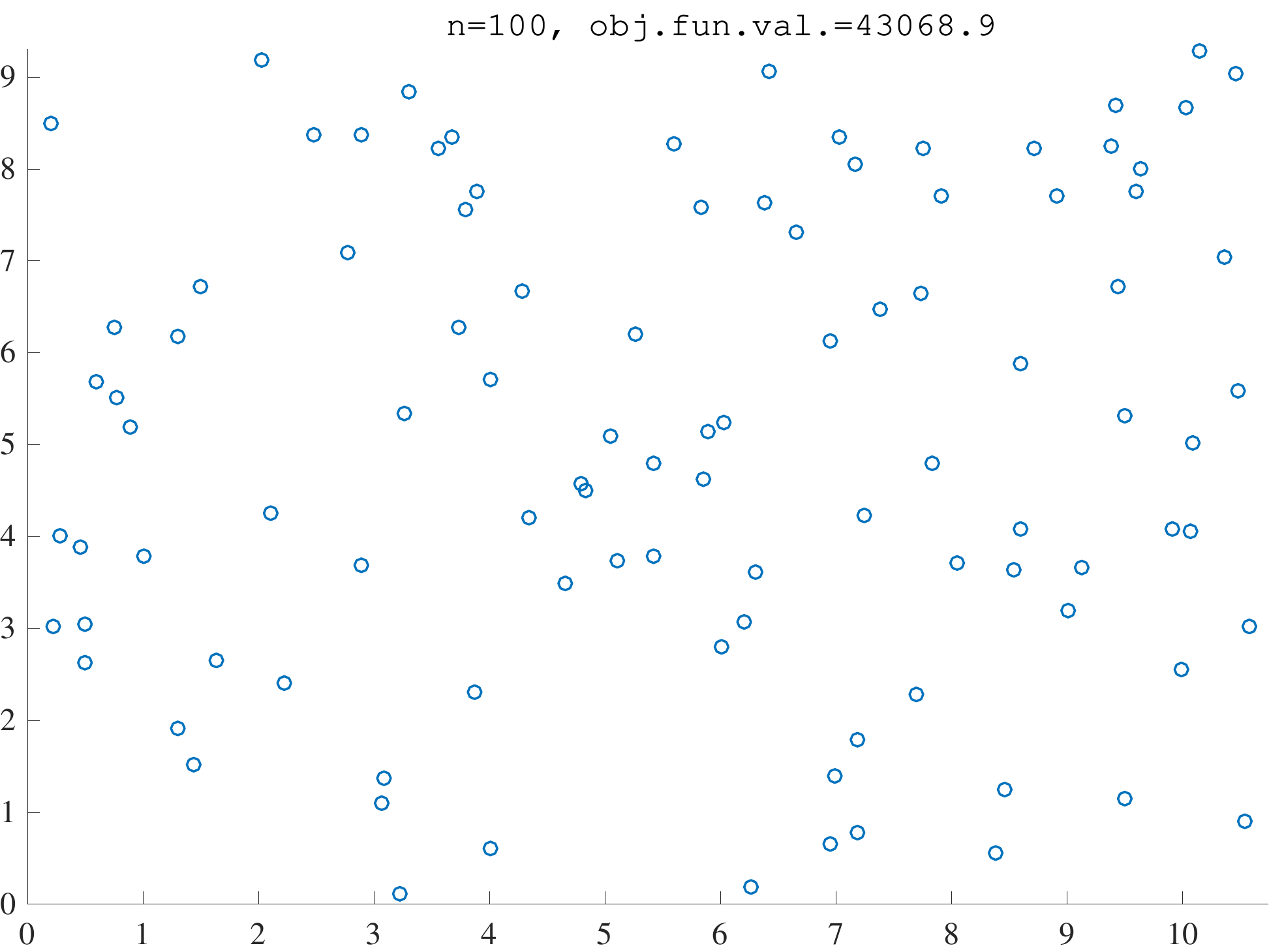}
\includegraphics[height=.23\textwidth]{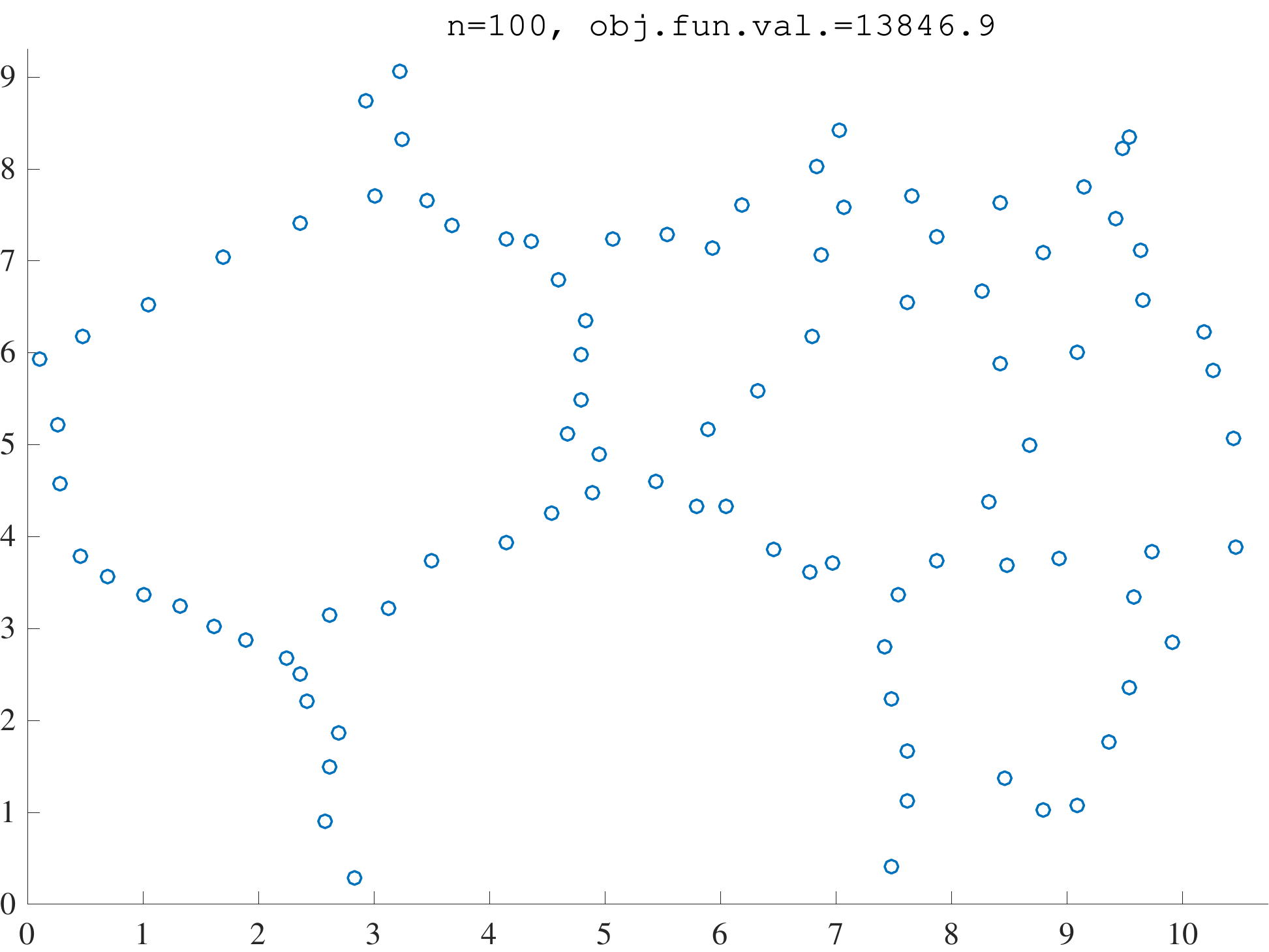}
\includegraphics[height=.23\textwidth]{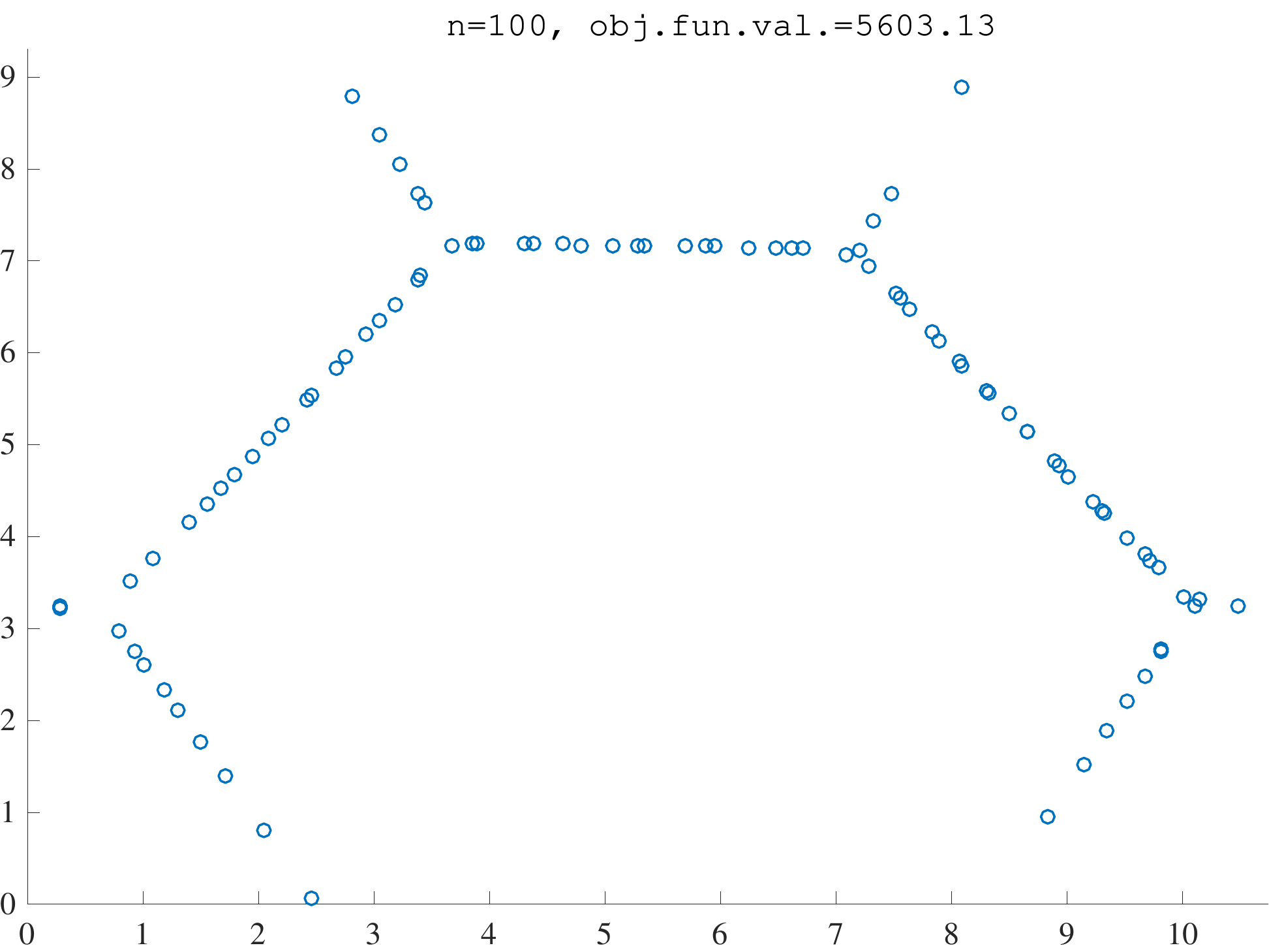}
\includegraphics[height=.23\textwidth]{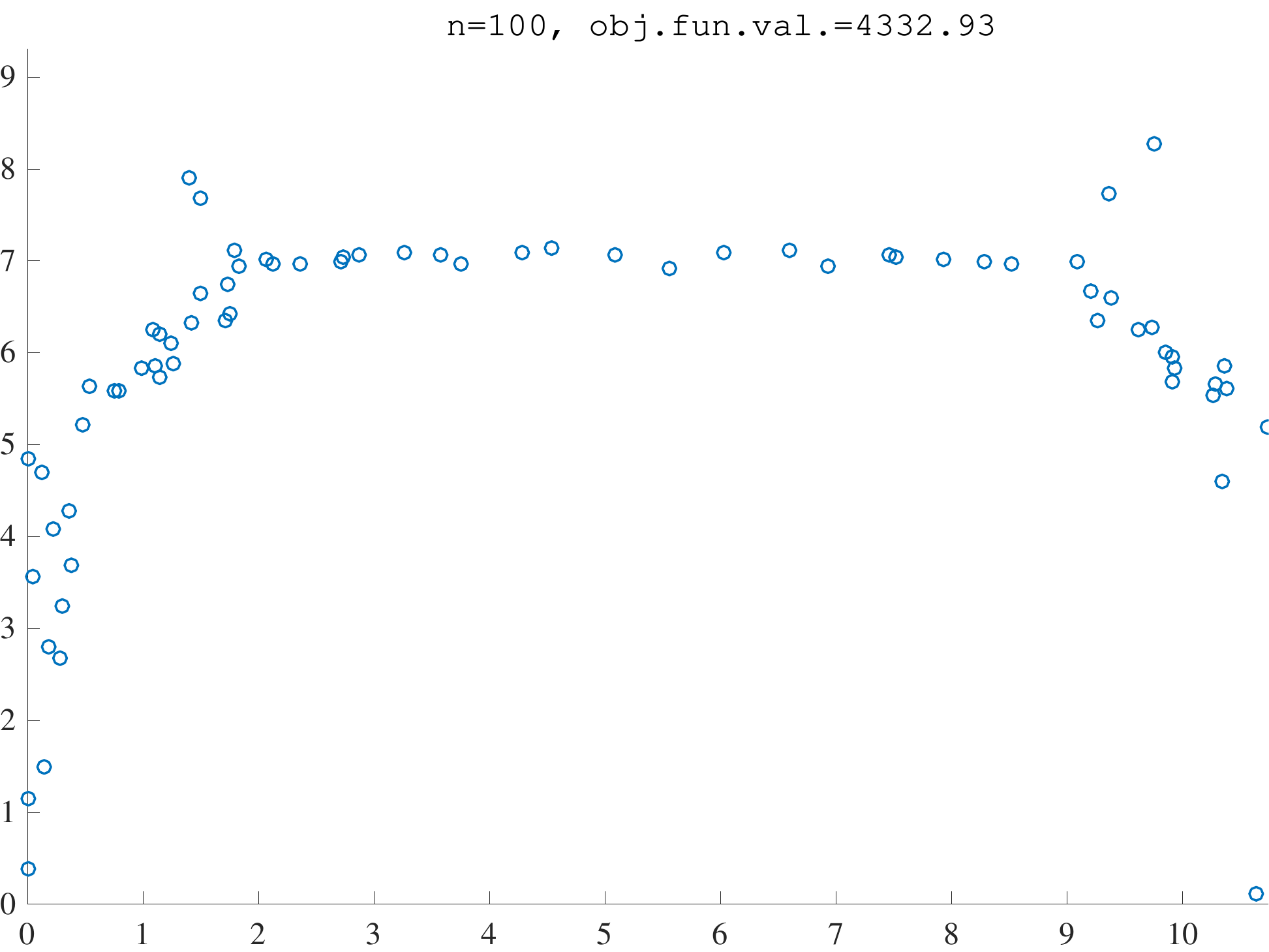}
\includegraphics[height=.23\textwidth]{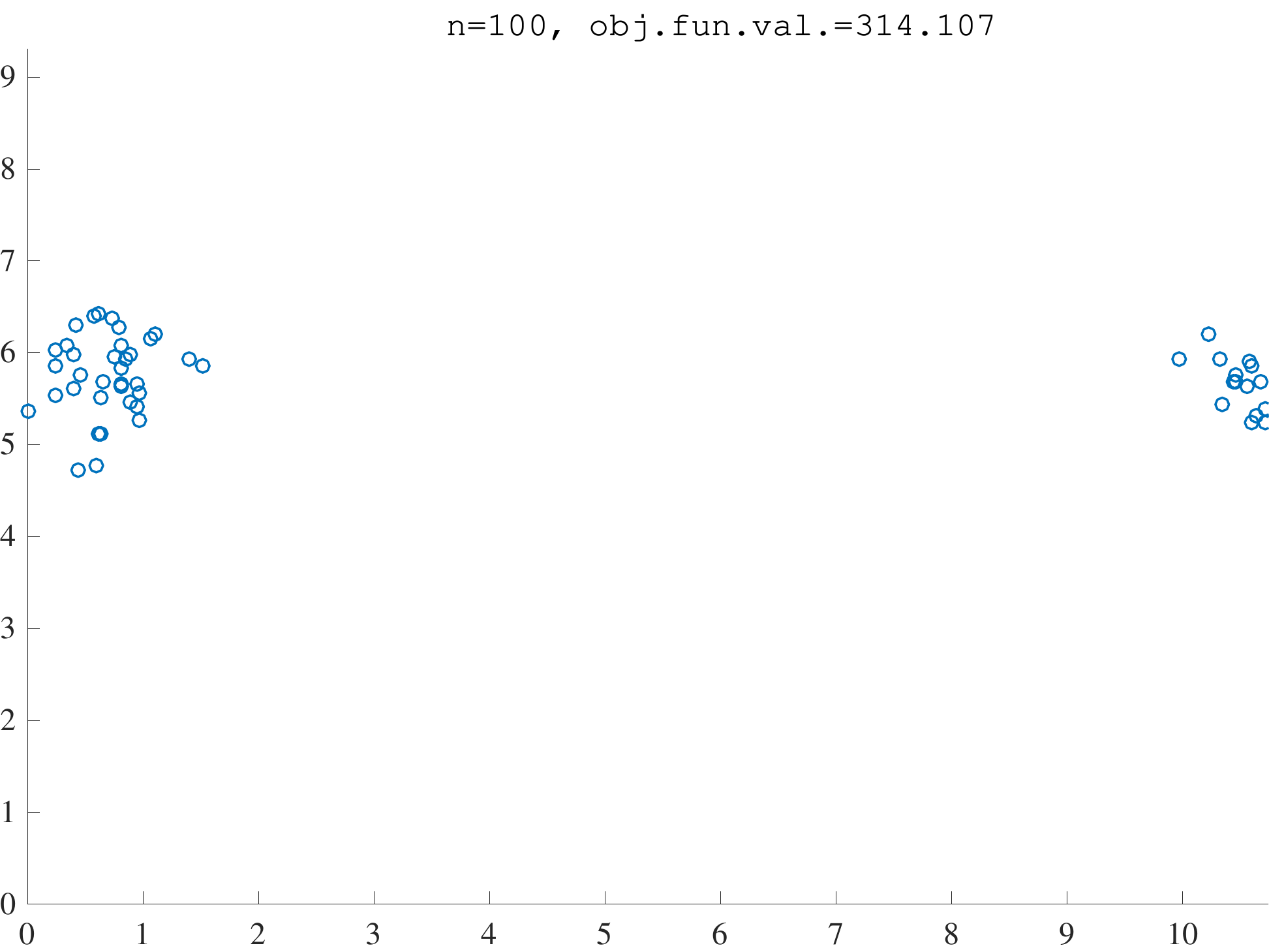}
\includegraphics[height=.23\textwidth]{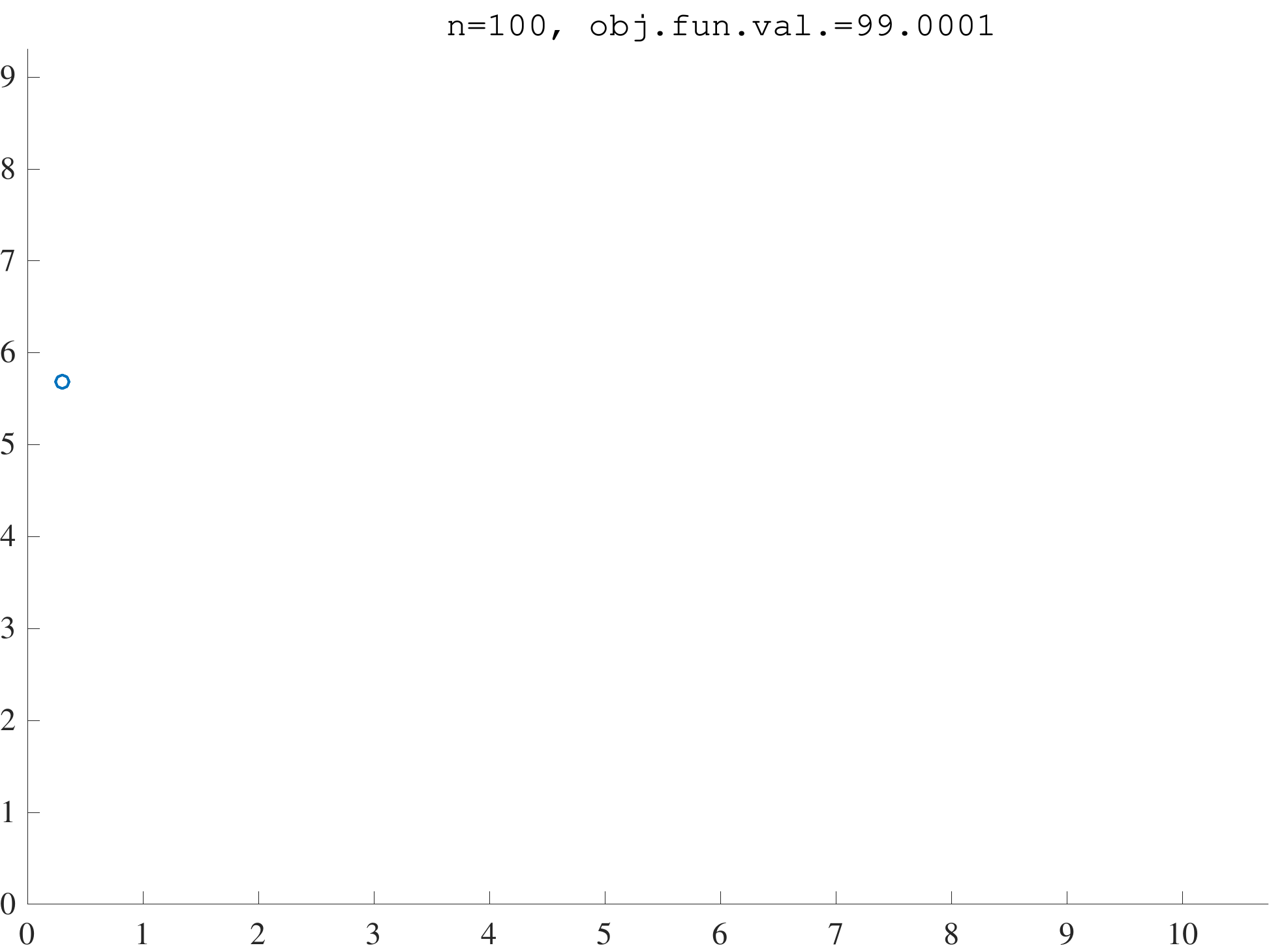}
\caption{Pointset configurations for iterations 1, 10, 150, 200, 250, and 400 for the  problem of minimizing the effective resistance as described in Section \ref{sec:EffRes}. }
\label{fig:effResFundTime}
\end{center}
\end{figure}

\subsection{Variance of the graph Laplaican eigenvalues} \label{sec:Var}
We consider the variance of the graph Laplacian eigenvalues, 
$$
V^\x = \frac{1}{n}\sum_{i\in[n]} (\lambda_i^\x)^2 -  \left(\frac{1}{n} \sum_{i\in[n]} \lambda^\x_i \right)^2. 
$$
We choose parameters and $f$ as in Section~\ref{sec:Thompson}.  In Figure~\ref{fig:effResFundDom}(right), we plot $V^\x$ for the configuration $\{ \x_i\}_{i \in [N^2]} = \Lambda^{a,b}_N$ as the lattice parameters $(a,b)$ vary over the set $U$ in Proposition~\ref{prop:LatticeParam}. 
For general pointset configurations, we again obtain a configuration that is very close to a triangular lattice, with $d_{\text{min}}^\x = 1.1\times 10^{-6}$. 

\subsection{Distance to an interval} \label{sec:interval}
Finally, we report on an objective function that is inspired by the solar cell design problem. 
Recall that in this application, it is advantageous to have many eigenvalues near a particular value (so that the corresponding states can couple to the solar radiation). 
We consider an interval, say $[\sigma_-, \sigma_+]$,  where the solar spectrum is concentrated. We consider the following objective, 
$$
\Sigma^\x = \sum_{j\in [n]}  \   |\lambda^\x_j - \sigma_-| +  |\lambda^\x_j - \sigma_+| - |\sigma_+ - \sigma_-| ,
$$ 
which measures the sum of the distances between the eigenvalues and the interval, $[\sigma_-, \sigma_+]$. If all of the eigenvalues were contained in the interval $[\sigma_-, \sigma_+]$, then the objective function value would be zero. 
We chose a variety of values for  $\sigma_\pm$, where the center of the interval is nearly  the mean of the eigenvalues for the triangular lattice and the width of the interval is proportional to  the standard deviation of the eigenvalues  for the triangular lattice. 
We choose parameters and $f$ as in Section~\ref{sec:Thompson}. For the triangular lattice, the mean of the eigenvalues is $ 0.602$ and the standard deviation is $
3.47 \times 10 ^{-2}$. For choices of $\sigma_\pm$ such that $(\sigma_+ + \sigma_-)/2 \approx  0.602$,   the triangular lattice is the best configuration obtained, both among lattices and general pointset configurations. 
For truncated lattice configurations,   $\{ \x_i\}_{i \in [N^2]} = \Lambda^{a,b}_N$, are qualitatively the same as Figure \ref{fig:traceFundDom}(left). 
In such a setting, we obtain configurations with  $d_{\text{min}}^\x  \approx 1.0\times 10^{-7}$. 

If we shift the interval away from the mean, say   $(\sigma_+ + \sigma_-)/2 \approx  0.85$ with $(\sigma_+ - \sigma_-) = 0.06$, we observe very different behavior. For truncated lattice configurations,   $\{ \x_i\}_{i \in [N^2]} = \Lambda^{a,b}_N$, the optimal configuration is still triangular as show in Figure \ref{fig:distInterval}(left). However, for general pointset configurations, the triangular configuration is no longer even a local minimum. If we initialize with the triangular lattice, the method converges to a local minimum consisting of  ``bunched stripes'' with objective function value $\Sigma^\x = 41.97$, as plotted in Figure \ref{fig:distInterval}(center).  The pointset configuration found with the smallest objective function value is plotted in Figure \ref{fig:distInterval}(right). The objective function value for this configuration is $\Sigma^\x = 41.40$, while the objective function value for the truncated triangular lattice is $\Sigma^\x = 44.74$. This configuration is locally triangular but has several holes in the structure. We observe similar phenomena for intervals that are shifted farther right of the mean. 

\begin{figure}[t]
\begin{center}
\includegraphics[height=.23\textwidth]{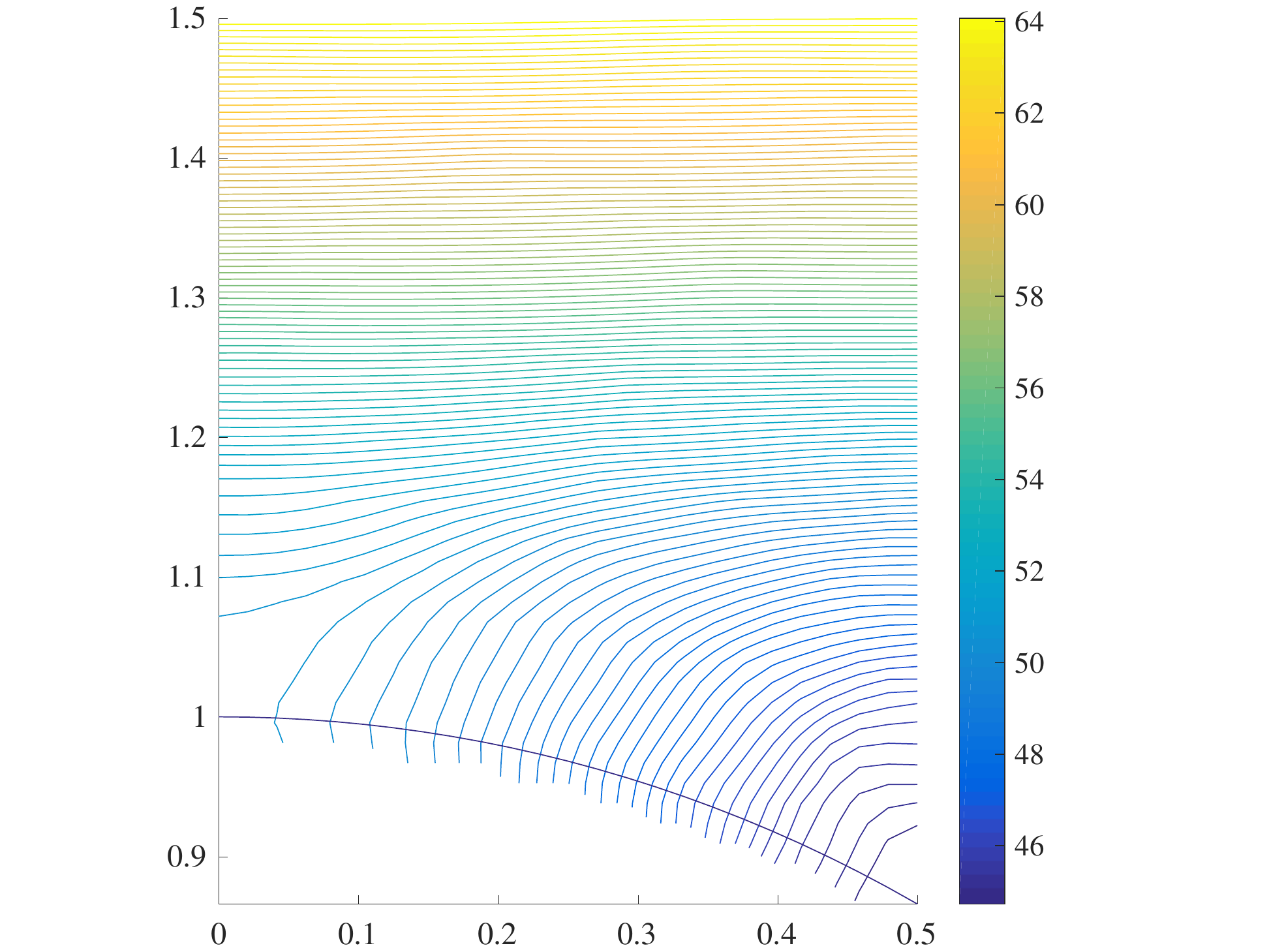}
 \includegraphics[height=.23\textwidth]{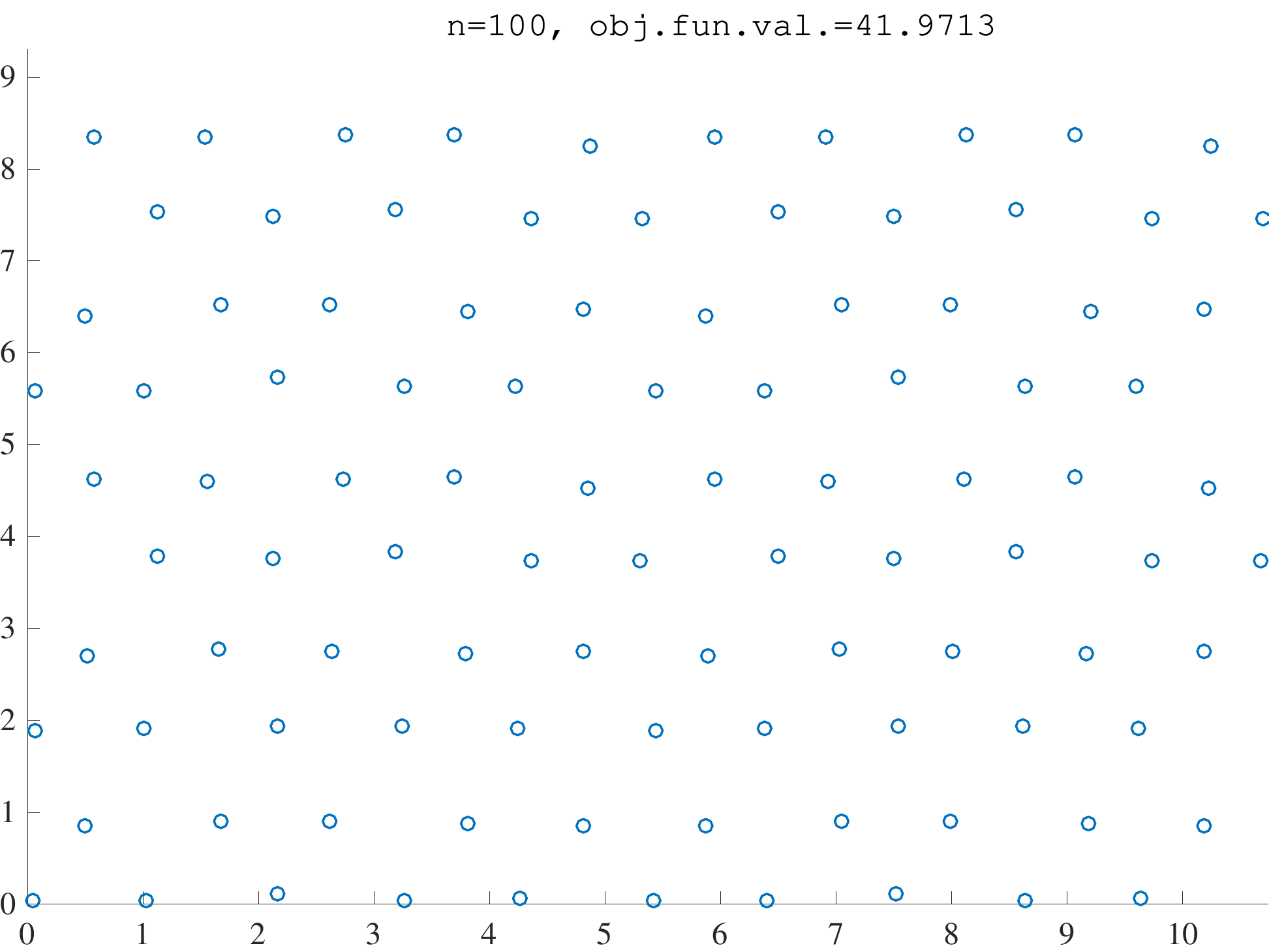}
 \includegraphics[height=.23\textwidth]{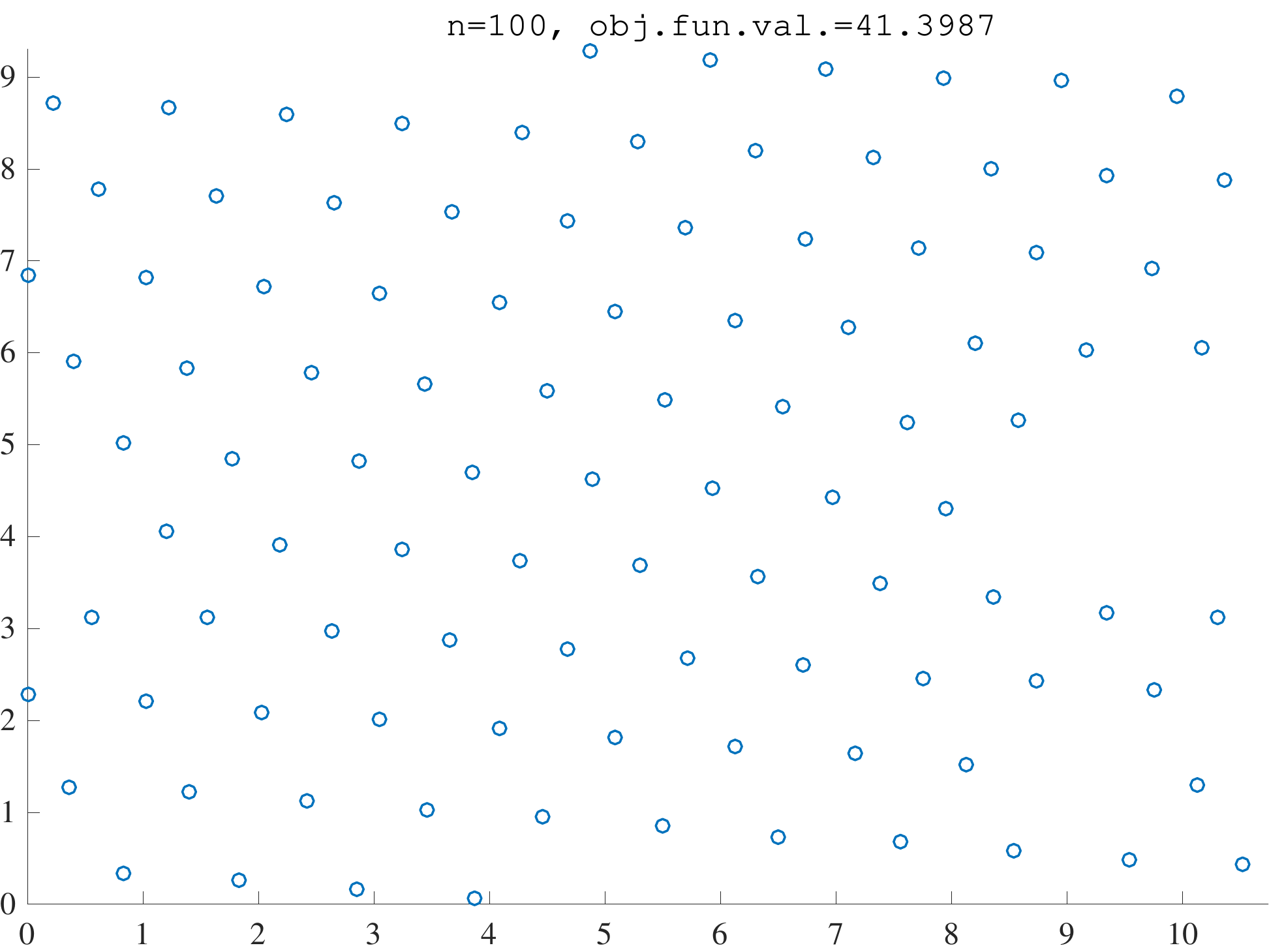}
\caption{ For the objective function in Section \ref{sec:interval}, with a particular choice of $\sigma_\pm$, the triangular lattice is optimal among lattices (left), but not even a local minimum among general pointset configurations (center). The best configuration obtained is plotted in the right panel.   }
\label{fig:distInterval}
\end{center}
\end{figure}

\section{Discussion } \label{sec:disc}
In this paper, we considered spectrally optimal pointset configurations on spheres and tori, as well as spectrally optimal lattices configurations.  On spheres, we have shown that the regular simplex is an optimizer of several spectral quantities using convex analysis.  On lattices, we are able to connect various spectral optimization problems to the natural notions of sphere packings and observe that the triangular lattice is also a ubiquitous optimizer for various spectral quantities.  The key tools involved use notions of spectral measure and Fourier analysis on regular Bravais lattices.  On tori, our results are much weaker (and largely numerical), but we can observe using convergence of spectral measures that equilateral structures appears to still arise naturally in certain limits.  However, the triangular lattice is not optimal among general pointset configurations for all objectives, even for objectives for which the triangular lattice is optimal among truncated lattice configurations. In particular, one interesting setting, discussed in Section \ref{sec:interval}, is minimizing the distance of the spectrum of the graph Laplacian to a fixed interval for a general pointset on a torus.  In this case, for intervals located somewhat far from the mean of that for the truncated triangular lattice, it is possible to observe configurations with smaller objective function than the truncated  triangular lattice.  Characterizing spectral objectives for which the optimal configuration is given by a truncated lattice and identifying spectral objectives which yield non-lattice optimal configurations are very  interesting future directions.

\appendix

 \section{Sensitivity analysis of spectral quantities}  \label{sec:sensitivity analysis}
 In Sections \ref{eq:eigW} and \ref{eq:eigGraphLap}, we introduced the weighted adjacency matrix and graph Laplacian associated to a pointset configuration, discussed some spectral properties of the matrices, and recalled  a variety of spectral quantities that are of interest in various applications. In this appendix, we discuss the sensitivity of these spectral quantities  with respect to changes
 in the pointset configuration.

We say that a function $J \colon \mathbb R^n \to \mathbb R$ is  a \emph{symmetric}  if $J(x) = J([x])$, where $[\cdot]\colon \mathbb R^n \to \mathbb R^n$ rearranges the components of a vector in non-decreasing order. In other words, the value of $J$ is invariant to permuting the components of the argument. Recall that we have defined ${\bf S}^n$ to be the set of real symmetric $n \times n$ matrices and let $\lambda\colon {\bf S}^n \to \mathbb R^n$ be the vector containing the ordered eigenvalues of a symmetric matrix.  We say that the composition $J\circ \lambda$ is a \emph{spectral function} if $J$ is a symmetric function. Roughly speaking, $J\circ \lambda$ is convex and differentiable when $J$ is convex and differentiable \cite{BoLe2006}. 
 
 Recall that the weighted adjacency matrix associated with a Euclidean pointset  $\{\x_i \}_{i\in [n]}$ is defined as in \eqref{eq:W}. 
For the spectral function $J\circ \lambda$, we consider the composition 
  $$ J \circ \lambda \circ W^\x .  $$
 The following proposition shows how the composite function, $  J \circ \lambda \circ W^\x$, changes as we move a single point in the configuration, say $\x_i \in \mathbb R^d$.

 \begin{prop} \label{prop:objFunDiffW} 
 Let $J$ be differentiable at $\lambda(W^\x)$. The gradient of the objective function, $ J \circ \lambda \circ W^\x \colon \mathbb R^{N\times 2} \to \mathbb R$, with respect to $\x_i$ is given by 
$$
\nabla_{\x_i} (J \circ \lambda \circ W^\x) = 4 \sum_k \left( U \  \mathrm{diag} \nabla J (\lambda)  \  U^t \right)_{ik}  
 f'( d^2(\x_i,  \x_k) )  \ (\x_ i - \x_k)
 $$
for any diagonalizing matrix $U\in  O^n$ satisfying $W^\x = U  \  \mathrm{diag}\ \lambda (W^\x) \   U^t$. 
\end{prop}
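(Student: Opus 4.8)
The plan is to differentiate the composite map $J\circ\lambda\circ W^{\x}$ using the chain rule in three stages: first the sensitivity of the spectral function $J\circ\lambda$ with respect to the symmetric matrix $W^{\x}$, then the sensitivity of $W^{\x}$ with respect to each squared distance $d^2(\x_i,\x_k)$, and finally the sensitivity of $d^2(\x_i,\x_k)$ with respect to the point $\x_i$. For the first stage I would invoke the classical result (see \cite{BoLe2006}, and as recalled in the discussion of spectral functions just above) that if $J$ is differentiable at $\lambda(W^{\x})$, then $J\circ\lambda$ is differentiable at $W^{\x}$ with gradient
$$
\nabla (J\circ\lambda)(W^{\x}) = U\,\mathrm{diag}\,\nabla J(\lambda(W^{\x}))\,U^{t},
$$
for any orthogonal $U$ diagonalizing $W^{\x}$ as $W^{\x}=U\,\mathrm{diag}\,\lambda(W^{\x})\,U^{t}$; the value is independent of the choice of $U$ among such diagonalizers, which is why the formula in the statement is well-posed.

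Next I would compute $\partial W^{\x}_{pq}/\partial \x_i$. Since $W^{\x}_{pq}=f(d^2(\x_p,\x_q))$ for $p\neq q$ and is identically zero for $p=q$, the only entries depending on $\x_i$ are those in row $i$ and column $i$: for $k\neq i$,
$$
\nabla_{\x_i} W^{\x}_{ik} = \nabla_{\x_i} W^{\x}_{ki} = f'(d^2(\x_i,\x_k))\,\nabla_{\x_i}\, d^2(\x_i,\x_k) = 2\,f'(d^2(\x_i,\x_k))\,(\x_i-\x_k),
$$
using $d^2(\x_i,\x_k)=|\x_i-\x_k|^2$ on the Euclidean ambient space. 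Assembling via the chain rule, $\nabla_{\x_i}(J\circ\lambda\circ W^{\x}) = \sum_{p,q}\big(\nabla(J\circ\lambda)(W^{\x})\big)_{pq}\,\nabla_{\x_i}W^{\x}_{pq}$, and the double sum collapses to $2\sum_{k\neq i}\big(U\,\mathrm{diag}\,\nabla J(\lambda)\,U^{t}\big)_{ik}\,f'(d^2(\x_i,\x_k))\,(\x_i-\x_k)$ from row $i$, plus an identical contribution from column $i$ by symmetry of $\nabla(J\circ\lambda)(W^{\x})$. The two copies give the factor $4$; extending the sum to all $k$ is harmless since $\x_i-\x_k$ vanishes at $k=i$ (and $f'$ is never evaluated at $0$ there). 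This yields exactly the claimed expression.

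The main technical point to handle carefully is the differentiability of the spectral function itself, in particular the subtlety that $U$ is not unique when eigenvalues of $W^{\x}$ are repeated. The resolution is standard: the matrix $U\,\mathrm{diag}\,\nabla J(\lambda)\,U^{t}$ is invariant under the block-orthogonal ambiguity in $U$ precisely because $J$ is a symmetric function (so $\nabla J(\lambda)$ is constant along the eigenvalue multiplicities), which is what makes $J\circ\lambda$ differentiable in the first place — this is the content of the cited result from \cite{BoLe2006}, and I would simply quote it rather than reprove it. Everything else is bookkeeping: the Euclidean gradient of the squared distance and the collapse of the chain-rule double sum. I would also remark that the hypothesis ``$J$ differentiable at $\lambda(W^{\x})$'' is exactly what is needed, and that for the non-differentiable spectral functions considered elsewhere (spectral radius, algebraic connectivity) one replaces the gradient by an element of the subdifferential, obtained by the same formula with $\nabla J$ replaced by a subgradient of $J$.
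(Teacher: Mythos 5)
Your proposal is correct and follows essentially the same route as the paper's proof: invoke the Lewis-type formula $\nabla(J\circ\lambda)(W^{\x}) = U\,\mathrm{diag}\,\nabla J(\lambda)\,U^{t}$ from \cite{BoLe2006}, pair it with $\nabla_{\x_i}W^{\x}$ via the Frobenius inner product, collapse the double sum to row and column $i$ by symmetry (factor $2$), and pick up another factor $2$ from $\nabla_{\x_i}d^2(\x_i,\x_k)=2(\x_i-\x_k)$. The remarks on well-posedness under the orthogonal ambiguity in $U$ and on subgradients for the non-smooth case are consistent with how the paper handles those points elsewhere.
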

\begin{proof} The proof is an exercise in the chain rule. 
When $ J$ is differentiable at $\lambda(A)$,  the composition $J\circ \lambda  \colon {\bf S}^N \to \mathbb R$ is Fr\'echet differentiable with derivative 
$$
\nabla (J\circ \lambda) (A) = U \  \mathrm{diag} \nabla J (\lambda ) \  U^t
$$
for any diagonalizing matrix $U\in  O^n$ satisfying $A = U  \  \mathrm{diag}\ \lambda (A) \   U^t$. See, for example, \cite[Corollary 5.2.5]{BoLe2006}. For simplicity, denote  $V = U \  \mathrm{diag} \nabla J (\lambda) \  U^t$. Thus, we compute 
\begin{subequations}
\label{eq:dJ}
\begin{align}
\label{eq:dJa}
\nabla_{\x_i}( J \circ \lambda \circ W^\x) &= \langle V , \nabla_{\x_i} W^\x \rangle_F \\
\label{eq:dJb}
&=  \sum_{jk} V_{jk} \nabla_{\x_i} W_{jk}^\x \\ 
\label{eq:dJc}
&=  \sum_{jk} V_{jk} \left(  \delta_{ij} \nabla_{\x_i}  W_{ik}^\x + \delta_{ik} \nabla_{\x_i}  W_{ji}^\x \right)\\
\label{eq:dJd}
&=  \sum_{k} V_{ik} \   \nabla_{\x_i}  W_{ik}^\x  +  \sum_{j} V_{ji} \  \nabla_{\x_i}  W_{ji}^\x  \\
\label{eq:dJe}
&=  2 \sum_{k} V_{ik} \    \nabla_{\x_i}  W_{ik}^\x 
\end{align}
\end{subequations}
In \eqref{eq:dJa}, $\langle \cdot , \cdot \rangle_F $ denotes the Frobenius inner product. In \eqref{eq:dJc},  $\delta_{ij}$ denotes the Kronecker delta function. In \eqref{eq:dJe}, we used the symmetry of $V$ and $W$.  We compute 
\begin{equation} \label{eq:dwdx}
 \nabla_{\x_i}  W_{ik}^\x = f'( d^2(\x_i,  \x_k) )  \ \nabla _{\x_i}  d^2(\x_i,  \x_k) = 
 2 f'( d^2(\x_i,  \x_k) )  \ (\x_ i - \x_k).
 \end{equation}
 from which the result follows. 
\end{proof}

\bigskip

\noindent {\bf Example.} Consider the  spectral function $J(\lambda) = \sum_i \lambda_i^2$. In this case,  $U \  \mathrm{diag} \nabla J (\lambda)  \  U^t = 2 W^\x$. Thus from Proposition~\ref{prop:objFunDiffW} we can check that 
$$
\nabla_{\x_i} (J \circ \lambda \circ W^\x)  = 2 \langle W^\x , \nabla_{\x_i} W^\x \rangle_F  = \nabla_{\x_i} \| W^\x \|_F^2 .
$$
This is a trivial identity since  $ \| W^\x \|_F^2 =  \mathrm{tr} [ (W^\x)^2 ] = \sum_i \lambda_i(W^\x)^2$. 

\bigskip

As in \eqref{eq:L}, the graph Laplacian associated with a pointset $\{\x_i \}_{i\in [n]}$ is given by $L^\x = D^\x - W^\x$.  
For the spectral function $J\circ \lambda$, we consider the composition 
  $$
  J \circ \lambda \circ L^\x. 
  $$
The following proposition shows how this composite function, $    J \circ \lambda \circ L^\x$, changes as we move a single point in the configuration, say $\x_i \in \mathbb R^n$. 

\begin{prop} \label{prop:objFunDiffL} 
Let $J$ be differentiable at $\lambda(L^\x)$.  The gradient of the objective function, $ J \circ \lambda \circ L^\x \colon \mathbb R^{N\times 2} \to \mathbb R$, with respect to $\x_i$ is given by 
\begin{equation}
\label{eq:dJLap}
\nabla_{\x_i} (J \circ \lambda \circ L^\x) = 2 \sum_k(V_{kk} - 2 V_{ik} + V_{ii})  \
 f'( d^2(\x_i,  \x_k) )  \ (\x_ i - \x_k)
 \end{equation}
where $V =  \left( U \  \mathrm{diag} \nabla J (\lambda)  \  U^t \right)_{ik}$ for any diagonalizing matrix $U\in  O^n$ satisfying $L^\x = U  \  \mathrm{diag}\ \lambda (L^\x) \   U^t$. The gradient can alternatively  be expressed using the arc-vertex incidence matrix decomposition  \eqref{eq:divgrad}, 
\begin{equation}
\label{eq:dJLap2}
\nabla_{\x_i} (J \circ \lambda \circ L^\x) =  2 \sum_{k = (i,j)} g_k \  f'( d^2(\x_i,  \x_j) )  \ (\x_ i - \x_j).  
 \end{equation}
 where $g = \sum_{j=1}^n (Bu_j)^2 \big( \nabla J(\lambda) \big)_j$.
 \end{prop}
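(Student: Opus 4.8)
The plan is to follow the template of the proof of Proposition~\ref{prop:objFunDiffW}, replacing the weighted adjacency matrix with the graph Laplacian and exploiting the div--grad decomposition \eqref{eq:divgrad} so as to absorb the dependence of the degree matrix $D^\x$ on the point positions. First I would invoke \cite[Corollary 5.2.5]{BoLe2006} exactly as before: since $J$ is differentiable at $\lambda(L^\x)$, the spectral function $J\circ\lambda\colon {\bf S}^n\to\mathbb R$ is Fr\'echet differentiable at $L^\x$ with $\nabla(J\circ\lambda)(L^\x) = U\,\mathrm{diag}\,\nabla J(\lambda)\,U^t =: V$, a symmetric matrix, for any orthogonal $U\in O^n$ diagonalizing $L^\x$. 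The chain rule then gives $\nabla_{\x_i}(J\circ\lambda\circ L^\x) = \langle V, \nabla_{\x_i}L^\x\rangle_F$, where $\langle\cdot,\cdot\rangle_F$ is the Frobenius inner product.

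The key step is to evaluate this inner product without differentiating $D^\x$ and $W^\x$ separately. Writing $L^\x = B^t\,\mathrm{diag}(w^\x)\,B$ as in \eqref{eq:divgrad} and using the adjoint property of the Frobenius pairing, $\langle V, B^t\mathrm{diag}(w^\x)B\rangle_F = \langle BVB^t, \mathrm{diag}(w^\x)\rangle_F = \sum_{k\in E}(BVB^t)_{kk}\,w^\x_k$, so only the diagonal of $BVB^t$ enters. Since $w^\x_k = f(d^2(\x_a,\x_b))$ for edge $k=\{a,b\}$, the position $\x_i$ affects only those weights with $i\in k$, and $\nabla_{\x_i}w^\x_k = 2f'(d^2(\x_i,\x_j))(\x_i-\x_j)$ for $k=\{i,j\}$, exactly as in \eqref{eq:dwdx}. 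For such an edge, with row $k$ of $B$ equal to $\pm(e_i-e_j)$, one has $(BVB^t)_{kk} = (e_i-e_j)^tV(e_i-e_j) = V_{ii} - 2V_{ij} + V_{jj}$ by symmetry of $V$; assembling these contributions over the edges incident to $i$ yields \eqref{eq:dJLap} with $k$ in place of $j$. For the second expression \eqref{eq:dJLap2}, I would write $V = \sum_{j}(\nabla J(\lambda))_j\,u_ju_j^t$ in terms of the eigenvectors $u_j$ (the columns of $U$), so that $(BVB^t)_{kk} = \sum_j (\nabla J(\lambda))_j (Bu_j)_k^2 = g_k$; substituting this into the sum gives \eqref{eq:dJLap2}.

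The only real obstacle, and it is a mild one, is bookkeeping: ensuring the dependence of the diagonal degrees on $\x_i$ is correctly accounted for. The decomposition \eqref{eq:divgrad} sidesteps this entirely, since it encodes $L^\x$ as a fixed quadratic form in the edge weights. A reader preferring the direct route can instead expand $\langle V,\nabla_{\x_i}L^\x\rangle_F = \sum_j V_{jj}\,\nabla_{\x_i}d^\x_j - \sum_{j,k}V_{jk}\,\nabla_{\x_i}W^\x_{jk}$, use $\nabla_{\x_i}d^\x_j = \nabla_{\x_i}\sum_m W^\x_{jm}$, and verify that the diagonal contribution produces the $V_{ii}+V_{kk}$ terms while the off-diagonal contribution produces the $-2V_{ik}$ term, again recovering \eqref{eq:dJLap}. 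Either way the argument is an exercise in the chain rule, with differentiability guaranteed by the hypothesis on $J$ together with \cite[Corollary 5.2.5]{BoLe2006}.
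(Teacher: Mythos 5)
Your proposal is correct, and it rests on the same two ingredients as the paper's proof: the Fr\'echet differentiability of the spectral function from \cite[Corollary 5.2.5]{BoLe2006}, and the identity \eqref{eq:dwdx} for $\nabla_{\x_i} W^\x_{ik}$. The one organizational difference is that you derive \emph{both} formulas from the div--grad decomposition \eqref{eq:divgrad}, computing $\langle V, \nabla_{\x_i} L^\x\rangle_F = \sum_k (BVB^t)_{kk}\,\nabla_{\x_i} w^\x_k$ and evaluating $(BVB^t)_{kk} = V_{ii} - 2V_{ij} + V_{jj}$ for edge $k=\{i,j\}$, whereas the paper obtains \eqref{eq:dJLap} by expanding $\langle V, \nabla_{\x_i}(D^\x - W^\x)\rangle_F$ directly, reusing the adjacency-matrix computation \eqref{eq:dJe} together with $\nabla_{\x_i} D^\x_{kk} = \nabla_{\x_i} W^\x_{ik} + \delta_{ik}\sum_j \nabla_{\x_i} W^\x_{ij}$, and only switches to \eqref{eq:divgrad} for the second formula \eqref{eq:dJLap2}. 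Your unified route is slightly cleaner: it sidesteps the degree-matrix bookkeeping entirely and makes the equivalence of \eqref{eq:dJLap} and \eqref{eq:dJLap2} transparent, since $(BVB^t)_{kk} = \sum_j (\nabla J(\lambda))_j (Bu_j)_k^2 = g_k$ is exactly the coefficient appearing in both. The paper's direct expansion, on the other hand, keeps the first formula self-contained in terms of the matrix entries $V_{ik}$ without introducing the incidence matrix, which is convenient when one only wants \eqref{eq:dJLap}. You also correctly note, as the paper does implicitly, that $V$ is held fixed when differentiating (it is the derivative evaluated at the current configuration), so only the edge weights $w^\x_k$ with $i \in k$ contribute.
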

\begin{proof} As in the proof of Proposition~\ref{prop:objFunDiffW}, when $J$ be differentiable at $\lambda(L^\x)$, 
the composition  
$J\circ \lambda  \colon {\bf S}^n \to \mathbb R$ is Fr\'echet differentiable with derivative 
$$
 (J\circ \lambda)' (L^\x) = U \  \mathrm{diag} \nabla J (\lambda ) \  U^t
$$
for any diagonalizing matrix $U\in  O^n$ satisfying $L^\x = U  \  \mathrm{diag} \lambda (L^\x) \   U^t$.  
Denoting $V = U \  \mathrm{diag} \nabla J (\lambda) \  U^t$, it follows that
\begin{align}
\label{eq:VDWF}
\nabla_{\x_i}( J \circ \lambda \circ L^\x) &= \langle V , \nabla_{\x_i} (D^\x - W^\x )  \rangle_F \\
\nonumber
&= \sum_k V_{kk} \nabla_{\x_i} D_{kk}^\x - 2 \sum_k V_{ik} \nabla_{\x_i} W_{ik}^\x,
\end{align}
where we used \eqref{eq:dJe}. From
$$
\nabla_{\x_i} D_{kk}^\x = \nabla_{\x_i} W_{ik}^\x + \delta_{ik} \sum_j \nabla_{\x_i} W_{ij}^\x,
$$
we then have that 
\begin{align*}
\nabla_{\x_i}( J \circ \lambda \circ L^\x) &=  \sum_k V_{kk} \nabla_{\x_i} W_{ik}^\x + \sum_k V_{kk}  \delta_{ik} \sum_j \nabla_{\x_i} W_{ij}^\x - 2 \sum_k V_{ik} \nabla_{\x_i} W_{ik}^\x \\
&=  \sum_k V_{kk} \nabla_{\x_i} W_{ik}^\x +  V_{ii} \sum_j \nabla_{\x_i} W_{ij}^\x - 2 \sum_k V_{ik} \nabla_{\x_i} W_{ik}^\x \\
&=  \sum_k (V_{kk} - 2 V_{ik} + V_{ii}) \nabla_{\x_i}  W_{ik}^\x. 
\end{align*}
Equation \eqref{eq:dJLap} now follows from \eqref{eq:dwdx}.

From \eqref{eq:VDWF}, we could also write 
\begin{align}
\nonumber
\nabla_{\x_i} (J\circ \lambda \circ L) &= \langle V, \nabla_{\x_i}  (B^t \text{diag}(w^\x) B)  \rangle_F \\
\nonumber
&= \langle V,  B^t \text{diag}( \nabla_{\x_i}   w) B  \rangle_F \\
\nonumber
& = \langle (BU) \  \mathrm{diag} \nabla J (\lambda ) \  (BU)^t, \text{diag}( \nabla_{\x_i}   w^\x)  \rangle_F \\
\label{eq:VDWF2}
& = \Big\langle g , \nabla_{\x_i}   w^\x  \Big\rangle,
\end{align}
where 
$$
g = \text{diag} \left(  (BU) \  \mathrm{diag} \nabla J (\lambda ) \  (BU)^t \right).
$$ 
Thus, $g\in \mathbb R^{\binom n 2}$ has entries given by
$$
g_k = \sum_{j=1}^n (BU)_{k,j}^2 \big( \nabla J(\lambda) \big)_j. 
$$
Finally, we compute
$$
\nabla_{\x_i} w_k = \begin{cases}
2 f'( d^2(\x_i,  \x_j) )  \ (\x_ i - \x_j) & k = (i,j) \\
0 & \text{otherwise}.
\end{cases}
$$
which gives \eqref{eq:dJLap2}. 
\end{proof}

\bigskip

\noindent {\bf Example.} Consider the spectral function $J(\lambda) = \sum_i \lambda_i$ so that the objective function is simply $\text{tr} L^\x$. In this case, $V= \text{Id}$ and
$$
\nabla_{\x_i} (J \circ \lambda \circ L^\x) = 4 \sum_k  f'( d^2(\x_i,  \x_k) )  \ (\x_ i - \x_k) = \nabla_{\x_i} \sum_{j,k} f( d^2(\x_j,  \x_k) ). 
$$

\begin{rem} Equation \eqref{eq:VDWF2} in the proof of Proposition~\ref{prop:objFunDiffL} implies that the variation of a spectral function of the graph Laplacian with respect to the edge weights is generally given by 
\begin{equation}
\label{eq:edgeGradient}
\nabla_w [J \circ \lambda \circ (B^t \ \text{diag}(w) \ B) ] =  \text{diag}[ (BU) \ \text{diag} \big( \nabla J(\lambda) \big) \ (BU)^t ].
\end{equation}
For example, the gradient of the trace, $ \text{tr} L(w)$, is given by  
$$
\frac{\delta \text{tr} L}{\delta w}  = \text{diag} (B B^t) = 2 \cdot 1_{\binom n 2}.
$$
The factor of two simply indicates that each edge weight effects the degree of two vertices. 
When  $\lambda_2 \big( L(w)\big)$ is a simple eigenvalue,  the gradient is given by 
$$
\frac{\delta \lambda_2}{\delta w} = (B u_2)^2, 
$$
which agrees with the formulas in \cite{Ghosh:2006b,Ghosh:2006}. 
Finally, the gradient of the total effective resistance, $R_{tot} = \sum_j \lambda_j^{-1}$, is computed
$$
\frac{\delta R_{tot}}{\delta w} = - \text{diag} \big( B U \text{diag}(\lambda^{-2} ) (BU)^t \big)  
= - \text{diag}( B (L^\dag)^2 B^t), 
$$
which can also be found in  \cite{ghosh2008}.
\end{rem}

For a general spectral function $J$, the gradients computed in Propositions \ref{prop:objFunDiffW} and \ref{prop:objFunDiffL} can be used together with a quasi-Newton optimization method to efficiently search for a locally optimal pointset configuration; see Section \ref{sec:tori}. 

\section{Parameterization of Lattices} \label{sec:LattParam}
Let $B = [b_1, \ldots, b_n] \in \mathbb R^{n\times n}$ have linearly independent columns. The lattice generated by the basis $B$ is the set  of integer linear combinations of the columns of $B$, 
$$
\mathcal L(B) = \{ Bx \colon x \in \mathbb Z^n \}. 
$$
Let $B$ and $C$ be two lattice bases. We recall that $\mathcal L(B) = \mathcal L(C)$ if and only if there is a unimodular\footnote{A  matrix $A \in \mathbb Z^{n\times n}$ is \emph{unimodular} if $\mathrm{det} A  = \pm 1$.}  
matrix $U$ such that $B = CU$. Thus, there is a one-to-one correspondence between the unimodular $2\times 2$ matrices and the bases of a two-dimensional lattice. 

We say that two lattices are isometric if there is a rigid transformation that maps one to the other. The following proposition parameterizes the space of two-dimensional, unit-volume lattices modulo isometry. 
\begin{prop} \label{prop:LatticeParam}
Every two-dimensional lattice with volume one is isometric to a lattice parameterized by the basis 
$$ \begin{pmatrix} \frac{1}{\sqrt b} & \frac{a}{\sqrt b} \\ 0 & \sqrt b \end{pmatrix}, $$ 
where the parameters $a$ and $b$ are constrained to the set
$$
U := \left\{ (a,b) \in \mathbb R^2 \colon b>0, \ a \in [ 0,1/2  ], \ \text{and} \ a^2 + b^2 \geq 1 \right\}.
$$ 
\end{prop}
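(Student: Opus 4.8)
The plan is to reduce every unit-volume two-dimensional lattice to a canonical form in several normalization steps, each of which is a rigid motion (rotation, reflection) or a unimodular change of basis (which does not change the lattice). First I would recall that a lattice has volume one iff any basis matrix $B$ satisfies $|\det B| = 1$, and that two bases $B, C$ generate the same lattice iff $B = CU$ for some unimodular $U \in \mathbb Z^{2\times 2}$. So I am free to act on $B$ on the right by $GL_2(\mathbb Z)$ and on the left by $O(2)$, and I must land in the stated one-parameter family with $(a,b) \in U$.

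The key steps, in order: (1) Pick a shortest nonzero vector $v_1$ of the lattice; by rotating (left action by $SO(2)$) place it along the positive $x$-axis, so $v_1 = (r, 0)$ with $r = \|v_1\|$ the minimal distance. (2) Complete $v_1$ to a basis $\{v_1, v_2\}$; among all lattice vectors completing $v_1$ to a basis, choose $v_2$ of minimal length, so $v_2 = (s, t)$. Using $v_1 \mapsto v_1$, $v_2 \mapsto v_2 + k v_1$ (unimodular) we may reduce the first coordinate of $v_2$ into $[-r/2, r/2]$; then minimality of $\|v_2\|$ forces $|s| \le r/2$, and reflecting across the $x$-axis if necessary makes $t > 0$, and reflecting across the $y$-axis (composed with $v_1 \mapsto -v_1$, still an isometry composed with unimodular) makes $s \ge 0$, so $s \in [0, r/2]$. (3) Impose $\|v_1\| \le \|v_2\|$: since $v_1$ is globally shortest, $r^2 \le s^2 + t^2$. (4) Rescale: the volume is $rt = 1$, so $t = 1/r$; set $b := t^2 = 1/r^2$, equivalently $r = 1/\sqrt b$, $t = \sqrt b$, and set $a := s/r \in [0, 1/2]$, so $s = a/\sqrt b$. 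This yields exactly the basis
$$
\begin{pmatrix} \frac{1}{\sqrt b} & \frac{a}{\sqrt b} \\[2pt] 0 & \sqrt b \end{pmatrix},
$$
and the inequality $r^2 \le s^2 + t^2$ becomes, after multiplying through by $b = 1/r^2$, the condition $a^2 + b^2 \ge 1$. Together with $b > 0$ and $a \in [0, 1/2]$ this is precisely membership in $U$.

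The main obstacle is step (2): making precise that a globally shortest vector $v_1$ can always be extended to a \emph{reduced} basis, and that after the unimodular shear the second basis vector's shadow on $v_1^{\perp}$-complement lies in $[0, 1/2]$ of $\|v_1\|$ \emph{compatibly} with $v_2$ still being a shortest completing vector — i.e. that the two normalizations (shear to put $s$ in $[0,r/2]$, and choice of shortest $v_2$) do not conflict. This is the content of Lagrange–Gauss lattice reduction in dimension two, and the clean way to handle it is to run the Gauss reduction algorithm directly: repeatedly replace $v_2$ by $v_2 - \lfloor \langle v_1, v_2\rangle/\|v_1\|^2 \rceil v_1$ and swap $v_1 \leftrightarrow v_2$ when $\|v_2\| < \|v_1\|$; this terminates with $\|v_1\| \le \|v_2\|$ and $|\langle v_1, v_2\rangle| \le \tfrac12 \|v_1\|^2$, which are exactly the inequalities $\|v_1\|\le\|v_2\|$ and $s \in [-r/2, r/2]$ used above. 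I would cite standard references (e.g. \cite{ConwaySloane}) for this classical fact rather than reprove it, and then the remaining steps (sign normalizations by reflections, rescaling, and translating the two inequalities into the description of $U$) are routine bookkeeping.
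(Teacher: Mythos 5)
Your proof is correct and follows essentially the same route as the paper's: normalize a basis by rotations/reflections and unimodular shears into the canonical upper-triangular form, with $a^2+b^2\ge 1$ encoding that the first basis vector is no longer than the second and the shear symmetry $a\mapsto a+1$ (together with a reflection) restricting $a$ to $[0,1/2]$. The only difference is that you make the existence of a suitably reduced basis explicit via Lagrange--Gauss reduction (shortest vector, swap, termination), whereas the paper sketches this step and delegates the details to a citation.
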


The set $U$ defined in Proposition~\ref{prop:LatticeParam} is illustrated  in Figure~\ref{fig:FundDom}.  

\begin{figure}[t]
\begin{center}
\begin{tikzpicture}[scale=.4,thick,>=stealth',dot/.style = {fill = black,circle,inner sep = 0pt,minimum size = 4pt}]

 \filldraw[fill=blue!6]  (0,10) arc(90:60:10cm)  -- (5,17) [dashed] -- (0,17) -- cycle;

\draw[->] (-3,0) -- (13,0) coordinate[label = {below:$a$}];
\draw[->] (0,-.1) coordinate[label={below:$0$}] -- (0,17) coordinate[label = {left:$b$}];

\draw (10,.1) -- (10,-.1) coordinate[label={below:$1$}]; 
\draw (5,-.1)coordinate[label = {below:$0.5$}] -- (5,17) coordinate(top); 
\draw (10,0) arc (0:60:10) arc (60:90:10) node[midway]{\begin{tabular}{c} rhombic \\lattices\end{tabular}}{} arc(90:110:10) ; 
 
 \draw (5,8.660254) node[dot,label={right: \begin{tabular}{c} triangular \\lattice\end{tabular}}](triLat){};
 \draw (0,10) -- (0,13) coordinate[label={center:\rotatebox{90}{\begin{tabular}{c}rectangular\\lattices\end{tabular}}}]{} --(0,17) ;
 \draw (2.5,12) node[label={\begin{tabular}{c} oblique \\lattices\end{tabular}}]{}; 
 \draw (0,10) node[dot,label={below left:\begin{tabular}{c} square \\lattice\end{tabular}}]{};

\end{tikzpicture}
\caption{The set $U$ in Proposition~\ref{prop:LatticeParam}. Parameters $(a,b)$ corresponding to  square, triangular, rectangular, rhombic, and oblique lattices are also indicated.}
\label{fig:FundDom}
\end{center}
\end{figure}
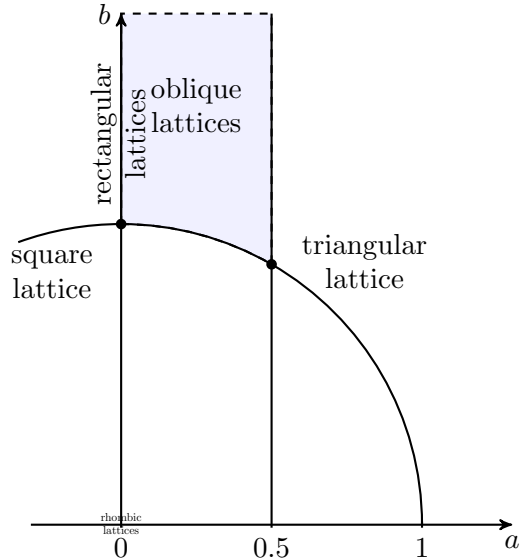

\begin{proof} Consider an arbitrary lattice with unit volume. 
We first choose the basis vectors so that the angle between them is acute.  After a suitable rotation and reflection, we can let the shorter basis vector (with length $\frac{1}{\sqrt b}$) be parallel with the $x$ axis and the longer basis  vector (with length $ \sqrt{ \frac{a^2}{b} + b} = \sqrt{\frac{1}{b} \left( a^2 + b^2 \right)} \geq \sqrt{ \frac{1}{b}}$) lie in the first quadrant. 
Multiplying on the right by a unimodular matrix, $ \begin{pmatrix} 1 & 1 \\ 0 & 1 \end{pmatrix}$, we compute 
$$
\begin{pmatrix} \frac{1}{\sqrt b} & \frac{a}{\sqrt b} \\ 0 & \sqrt b \end{pmatrix} 
 \begin{pmatrix} 1 & 1 \\ 0 & 1 \end{pmatrix} = 
\begin{pmatrix} \frac{1}{\sqrt b} & \frac{a+1}{\sqrt b} \\ 0 & \sqrt b \end{pmatrix} . 
$$
Since this is equivalent to taking $a\mapsto a+1$, it follows that we can identify the lattices associated to the points $(a,b)$ and $(a+1,b)$. 
Thus, we can  restrict the parameter $a$ to the interval 
$\left[ 0, 1/2  \right]$ by symmetry.  For a complete picture of this restriction and how the symmetry naturally arises, see \cite[Proposition $3.2$ and Figure $3$]{KLO2015}.  
\end{proof}

\printbibliography

\end{document}